\newtheorem{lemma}[subsection]{Lemma}
\newtheorem{proposition}[subsection]{Proposition}
\newtheorem{theorem}[subsection]{Theorem}
\newtheorem{corollary}[subsection]{Corollary}
\newtheorem{fact}[subsection]{Fact}
   {\refstepcounter{subsection}%
        \medbreak\noindent{\bf Question \thesubsection\space}}%
   {\par\medbreak}%
\newenvironment{remark}%
   {\refstepcounter{subsection}%
        \medbreak\noindent{\bf Remark \thesubsection\space}}%
   {\par\medbreak}%
\newenvironment{example}%
   {\refstepcounter{subsection}%
        \medbreak\noindent{\bf Example \thesubsection\space}}%
   {\par\medbreak}%
\newenvironment{proof}%
   {\medbreak\noindent{\it Proof:\space}}%
   {\hfill$\dashv$\smallbreak}%
\newcommand{\df}{\bf}
\let\sauvegardetiret=\-
\renewcommand{\-}[1]{\ifx#1-\penalty10000\hbox{-\relax}\penalty10000\else\sauvegardetiret#1\fi}
\long\def\footnotesymbol[#1]#2{\begingroup%
\def\thefootnote{\fnsymbol{footnote}}\footnote[#1]{#2}\endgroup}
\newcommand{\tq}{\,\big/\ }
\newcommand{\llor}{\mathop{\lor\mskip-6mu\relax\lor}}
\newcommand{\vvee}{\llor}
\newcommand{\lland}{\mathop{\land\mskip-6mu\relax\land}}
\newcommand{\wwedge}{\lland}
\newcommand{\mmeet}{\wwedge}
\newcommand{\jjoin}{\vvee}
\newcommand{\meet}{\wedge}
\newcommand{\join}{\vee}
\let\olddownarrow=\downarrow
\renewcommand{\downarrow}{{\olddownarrow}}
\let\olduparrow=\uparrow
\renewcommand{\uparrow}{{\olduparrow}}
\renewcommand{\tq}{\mathop{:}}
\newcommand{\conj}{\bigwedge}
\newcommand{\cconj}{\mathop{\conj\mskip-9mu\relax\conj}}
\newcommand{\disj}{\bigvee}
\newcommand{\ddisj}{\mathop{\bigvee\mskip-9mu\relax\bigvee}}
\newcommand{\llat}{{\cal L}_{\rm lat}}
\newcommand{\ltc}{{\cal L}_{\rm HA^*}}
\newcommand{\lha}{{\cal L}_{\rm HA}}
\newcommand{\UN}{{\rm\bf 1}}
\newcommand{\ZERO}{{\rm\bf 0}}
\newcommand{\cC}{{\cal C}}
\newcommand{\cH}{{\cal H}}
\newcommand{\cI}{{\cal I}}
\newcommand{\cV}{{\cal V}}
\newcommand{\Lneuf}{{\rm\bf L}_9}
\newcommand{\Lcinq}{{\rm\bf L}_5}
\newcommand{\Ltrois}{{\rm\bf L}_3}
\newcommand{\Ldeux}{{\rm\bf L}_2}
\newcommand{\Lun}{{\rm\bf L}_1}
\newcommand{\dec}{\mathop{\mathcal{L}^\downarrow}}
\newcommand{\jirr}{\mathop{\mathcal{I}^\join}}
\newcommand{\gen}[1]{\langle #1\rangle}
\newcommand{\card}{\mathop{\rm Card}}
\newcommand{\dessin}{%

  \begin{scope}[every node/.style={draw,shape=circle,fill=black, inner sep=0mm, minimum size=1mm}]

    \draw (-1,1) -- (-1,2) node{}
        (-2,2) -- (-2,3) node{}
        (0,2) node{} -- (0,3)
        (-1,3) node{} -- (-1,4);

    \draw (0,0) -- (-2,2)
      (1,1) node{} -- (-1,3)
      (-1,2) -- (-2,3)
      (0,3) -- (-1,4);

  \end{scope}

  \begin{scope}[every node/.style={draw,shape=circle,fill=white, inner sep=0mm, minimum size=1mm}]

    \draw (0,0) node{} -- (1,1) 
      (-1,1) node{} -- (0,2)
      (-2,2) node{} -- (-1,3)
      (-1,2) -- (0,3) node{}
      (-2,3) -- (-1,4) node{};
  \end{scope}
}
\title{Model completion of varieties of co-Heyting algebras}
\author{
\begin{minipage}{6cm} 
Luck Darni\`ere \\ 
\begin{small}
D\'epartement de math\'ematiques \\[-1ex] 
Facult\'e des sciences \\[-1ex] 
Universit\'e d'Angers, France
\end{small}
\end{minipage} 
\begin{minipage}{6cm} 
Markus Junker \\
\begin{small}
Mathematisches Institut \\[-1ex] 
Abteilung f\"ur mathematische Logik \\[-1ex] 
Universit\"at Freiburg, Deutschland
\end{small}
\end{minipage}
\bigskip
}
\begin{document}

\maketitle

\begin{abstract}\scriptsize
It is known that exactly eight varieties of Heyting algebras have a
model-completion. However no concrete axiomatization of these
model-completions were known by now except for the trivial variety
(reduced to the one-point algebra) and the variety of Boolean
algebras. For each of the six remaining varieties we introduce two
axioms and show that 1) these axioms are satisfied by all the algebras
in the model-completion, and 2) all the algebras in this variety
satisfying these two axioms satisfy a certain remarkable embedding
theorem. For four of these six varieties (those which are locally
finite) these two results provide a new proof of the existence of a
model-completion with, in addition, an explicit and finite
axiomatization.
\\
\\
{\bf MSC 2000:} 06D20, 03C10
\end{abstract}

\section{Introduction}%
\label{se:intro}

It is known from a result of Maksimova \cite{maks-1977} that there are
exactly eight varieties of Heyting algebras that have the amalgamation
property (numbered $\cH_1$ to $\cH_8$, see section~\ref{se:prereq}).
Only these varieties (more exactly their theories) can have a model
completion\footnote{Basic model theoretic notions are recalled in
section~\ref{se:prereq} but we may already point out a remarkable
application of the existence of a model-completion for these
varieties, namely that for each of the corresponding
super-intuitionistic logics the second order propositional calculus
IPC$^2$ is interpretable in the first order propositional calculus
IPC$^1$ (in the sense of \cite{pitt-1992}).} and it is known since the
1990's that this is indeed the case\footnote{See
\cite{ghil-zawa-1997}. For $\cH_3$ to $\cH_8$, which are locally
finite, the existence of a model-completion follows from the
amalgamation property and \cite{whee-1976}, corollary~5. For the
variety $H_1$ of all Heyting algebras it is a translation in
model-theoretic terms of a theorem of Pitts \cite{pitt-1992} combined
with \cite{maks-1977}, as is explained in \cite{ghil-zawa-1997}. It is
also claimed in \cite{ghil-zawa-1997} that the same holds for $\cH_2$
up to minor adaptations of \cite{pitt-1992}.}. On the other hand no
model-theoretic proof of these facts were known until now and these
model-completions still remain very mysterious except for $\cH_7$ and
$\cH_8$: the latter is the trivial variety reduced to the one point
Heyting algebra, and the former is the variety of Boolean algebras
whose model completion is well known. 

In this paper we partly fill this lacuna by giving new proofs for some
of these results using algebraic and model-theoretic methods, guided
by some geometric intuition. We first give in
section~\ref{se:minim-exten} a complete classification of all the
minimal finite extensions of a Heyting algebra $L$. This is done by
proving that these extension are in one-to-one correspondence with
certain special triples of elements of $L$. Each of the remaining
sections~\ref{se:V1} to \ref{se:V6} is devoted to one of the varieties
$\cH_i$. We introduce for each of them two axioms that we call
``density'' and ``splitting'' and prove our main results:

\begin{theorem}\label{th:EC-intro}
  Every existentially closed model of $\cH_i$ satisfies the density
  and splitting axioms of $\cH_i$.
\end{theorem}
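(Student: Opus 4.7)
The plan is to verify the axioms via the standard model-theoretic characterization of existential closure. Density and splitting are (by design, one expects) axioms of the shape $\forall \bar a\,[\psi(\bar a) \to \exists b\,\varphi(\bar a,b)]$ with $\psi,\varphi$ quantifier-free. For such a sentence an existentially closed $M \in \cH_i$ satisfies it as soon as, for each tuple $\bar a \in M$ with $M \models \psi(\bar a)$, there exists an extension $M \hookrightarrow M'$ inside $\cH_i$ together with a witness $b' \in M'$ for $\varphi(\bar a, b')$. Hence the whole theorem reduces to producing, for each instance of density and each instance of splitting, a suitable $\cH_i$-extension of $M$ in which such a witness appears.

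The construction of these extensions is where Section~\ref{se:minim-exten} does the real work. The classification of minimal finite extensions of a Heyting algebra $L$ in bijection with certain ``special triples'' of elements of $L$ provides a combinatorial toolbox: to add a witness for a given quantifier-free formula one identifies the appropriate triple in $M$ (or in a finitely generated subalgebra containing $\bar a$) and passes to the associated minimal extension. The density axiom should correspond to adding an element ``between'' two prescribed ones, while the splitting axiom should correspond to decomposing an element along an incomparable pair; in both cases the newly adjoined element realizes precisely what the axiom demands.

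The principal obstacle is not finding an extension but ensuring it remains in $\cH_i$. A minimal extension inside the variety $\cH_1$ of all Heyting algebras will in general violate the stronger equations defining the subvarieties $\cH_2,\dots,\cH_8$, so for each $i$ separately one has to determine which special triples yield extensions internal to $\cH_i$ and then check that the triples encoding the density and splitting configurations are among those admissible. This variety-by-variety case analysis is what motivates the one-section-per-variety structure of the paper. Once the required extension $M \hookrightarrow M' \in \cH_i$ has been constructed, the existence of $b'$ is an existential statement over parameters in $M$, so by existential closedness a witness exists already in $M$, yielding the desired instance of the axiom and, cumulated over all instances, the theorem.
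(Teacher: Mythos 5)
Your reduction is right in outline but it skips the one step that actually carries the weight of the proof, and as written the plan does not go through for $\cH_1$ and $\cH_2$. You propose to produce, for each tuple $\bar a$ in an existentially closed $M$, an extension of $M$ itself (or of a finitely generated subalgebra containing $\bar a$) containing a witness, using the classification of minimal finite extensions. But all the witness-producing constructions in the paper (lemmas~\ref{le:D1}, \ref{le:S1} and their analogues) operate only on \emph{finite} algebras: they go through the representation $L\cong\dec(\jirr(L))$ and proposition~\ref{pr:finite-injection}, which are meaningless for an infinite $M$. Passing to a finitely generated subalgebra does not help in $\cV_1$ or $\cV_2$, which are not locally finite; and even where it would, you still owe an argument that a witnessed extension of the subalgebra $L_0$ yields a witnessed extension of $M$ (this needs amalgamation or something equivalent, which you never invoke). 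The paper bridges exactly this gap with the finite model property plus compactness (fact~\ref{fa:fmp} and lemma~\ref{le:non-sense-fin}): one replaces $M$ by a \emph{finite} algebra satisfying $\theta(\bar a)$ together with an arbitrary finite chunk of the quantifier-free diagram of $M$, extends \emph{that} finite algebra to obtain a witness, and then uses compactness to produce a single extension of $M$ in which the witness lives. This is the missing idea, not a routine detail.

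A second, smaller inaccuracy: the witness extensions are not obtained from the minimal-extension/signature machinery of section~\ref{se:minim-exten}. For density one adjoins a new join irreducible below each join-irreducible component of $a$, and for splitting one performs a glueing that duplicates many join irreducibles at once; these are built directly with proposition~\ref{pr:finite-injection} and are in general far from minimal. Likewise, membership in the subvarieties is not secured by sorting out ``which special triples are admissible'' but by decomposing the finite algebras of $\cV_2,\cV_4,\cV_5,\cV_6$ into products over a convenient subclass and constructing the extension componentwise (lemma~\ref{le:non-sense-prod}). The signature apparatus is the engine of theorem~\ref{th:embed-intro}, the converse direction, where your description would have been apt.
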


\begin{theorem}\label{th:embed-intro}
  Given a Heyting algebra $L$ in $\cH_i$ and a finite substructure $L_0$
  of $L$, if $L$ satisfies the density and splitting axioms of $\cH_i$
  then every finite extension $L_1$ of $L_0$ admits an embedding into
  $L$ which fixes $L_0$ pointwise. 
\end{theorem}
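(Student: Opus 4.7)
The plan is to reduce the general embedding problem to a sequence of minimal one-step extensions and to handle each step using the density and splitting axioms of $\cH_i$. Since $L_1$ is finite, choose inside $L_1$ a maximal chain of Heyting subalgebras
\[L_0 = K_0 \subsetneq K_1 \subsetneq \cdots \subsetneq K_n = L_1,\]
so that each step $K_k \subsetneq K_{k+1}$ admits no proper intermediate Heyting subalgebra; in particular $K_{k+1}$ is a minimal extension of $K_k$ in the abstract sense. The goal is then to build embeddings $\iota_k : K_k \hookrightarrow L$ by induction on $k$, each fixing $L_0$ pointwise and extending the preceding one, starting from the inclusion $\iota_0 : L_0 \hookrightarrow L$. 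The final map $\iota_n : L_1 \hookrightarrow L$ is the embedding required by the theorem.

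For the inductive step, I would appeal to the classification of minimal finite extensions given in section~\ref{se:minim-exten}: it associates to $K_k \subsetneq K_{k+1}$ a canonical ``special triple'' $\tau_k$ of elements of $K_k$, and characterises the new generator by an explicit system of (in)equations involving $\tau_k$. Transferring $\tau_k$ into $L$ via $\iota_k$ yields a special triple $\iota_k(\tau_k)$ of $L$, and since $L$ lies in $\cH_i$ this triple is of one of the types compatible with the identities of $\cH_i$. The density and splitting axioms of $\cH_i$ are then meant to guarantee precisely that for every such admissible triple in $L$ there exists an element of $L$ satisfying exactly the (in)equations that define the new generator. Picking such a witness, and checking by the classification that the resulting map is a Heyting algebra embedding, produces the required extension $\iota_{k+1}$.

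The main obstacle is therefore a variety-by-variety verification, to be carried out in sections~\ref{se:V1} to \ref{se:V6}, that the density and splitting axioms of each $\cH_i$ together exhaust all the types of special triples that can arise under the identities of $\cH_i$. Intuitively density produces a witness sitting in the ``interior'' of a configuration, while splitting cuts a given element along a prescribed order datum; the list of special triples is constrained enough in each variety that these two primitives should suffice. A secondary subtlety is that the triples $\tau_k$ live in $K_k$ rather than in $L_0$, so at each step one must confirm that the axioms apply to the actual image $\iota_k(\tau_k)\subseteq L$ and not only to triples in $L_0$; this is automatic since density and splitting are universal statements about $L$. Once the case analysis is done for each of the six varieties, iterating the step $n$ times delivers the desired embedding uniformly.
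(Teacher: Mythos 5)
Your proposal follows essentially the same route as the paper: reduce to a tower of minimal extensions (proposition~\ref{pr:tour-primitive}), classify each step by its signature, realise that signature in $L$ by a primitive tuple obtained from the splitting axiom (followed by density in the $r=1$ case), and conclude via the uniqueness-over-the-base statement of corollary~\ref{co:isom-sign}; your observation that the axioms apply with parameters in the intermediate algebras because they are universal statements about $L$ is exactly the point that makes the induction go through. The substantive content you defer --- the variety-by-variety check that the signatures arising in $\cH_i$ satisfy the side conditions of the splitting axiom of $\cH_i$ --- is indeed where the paper spends its effort in the case analyses of theorems~\ref{th:embed-V1} through \ref{th:EC-V6}.
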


By standard model-theoretic arguments (see fact~\ref{fa:mod-comp}) it
follows immediately that if $\cH_i$ is locally finite then $\cH_i$ has
a model-completion which is axiomatized by the axioms of Heyting
algebras augmented by the density and the splitting axioms of $\cH_i$.
So this gives a new proof of the previously known model-completion
results for $\cH_3$ to $\cH_6$, which provides in addition a simple
axiomatization of these model completions.

Unfortunately we do not fully recover the existence of a
model-completion for $\cH_1$ and $\cH_2$. However our axioms shed some
new light on the algebraic structure of the existentially closed
Heyting algebras in these varieties. Indeed it is noticed in
\cite{ghil-zawa-1997} that such algebras satisfy the density axiom of
$\cH_1$, but neither the splitting property nor any condition
sufficient for theorem~\ref{th:embed-intro} to hold, seem to have been
suspected until now. Moreover all the algebraic properties of
existentially closed Heyting algebras in $\cH_1$ which are listed in
\cite{ghil-zawa-1997} can be derived from our two axioms, as we shall
see in the appendix. 

Let us also point out an easy consequences of
theorem~\ref{th:embed-intro}.

\begin{corollary}
  If $L$ is an algebra in $\cH_i$ which satisfies the density and
  splitting axioms of $\cH_i$ then every finite algebra in $\cH_i$
  embeds into $L$, and every algebra in $\cH_i$ embeds into an
  elementary extension of $L$. 
\end{corollary}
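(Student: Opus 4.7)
\emph{Proof plan.}

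The first assertion is a direct application of Theorem~\ref{th:embed-intro}. Given a finite $A \in \cH_i$, I would take $L_0 = \{0_L, 1_L\}$, the two-element subalgebra of $L$, and identify it with the two-element subalgebra $\{0_A, 1_A\}$ of $A$; then $A$ is a finite extension of $L_0$, and the theorem produces the desired embedding $A \hookrightarrow L$ fixing $L_0$ pointwise.

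For the second assertion, my plan is a standard elementary-diagram/compactness argument. Fix $M \in \cH_i$. Embedding $M$ into some elementary extension of $L$ amounts to showing that $\Th_{\mathrm{el}}(L) \cup \mathrm{Diag}(M)$ is satisfiable, and by compactness this reduces to the following statement: for every existential sentence $\sigma = \exists \vec{x}\,\phi(\vec{x})$ (with $\phi$ quantifier-free) satisfied by $M$, the sentence $\sigma$ is also satisfied by $L$. To verify this, I would produce a finite $M' \in \cH_i$ with $M' \models \sigma$: for the locally finite varieties $\cH_3,\dots,\cH_8$ the subalgebra $\gen{\vec{a}}$ of $M$ generated by witnesses $\vec{a}$ of $\sigma$ is automatically a finite such $M'$; for $\cH_1$ and $\cH_2$ one has to appeal instead to the finite model property of the corresponding super\-intuitionistic logic. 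Once such an $M'$ is at hand, the first assertion supplies $M' \hookrightarrow L$, and since quantifier-free formulas are preserved by embeddings, $L \models \sigma$.

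The only non-routine step is the finite model property in the non-locally-finite cases $\cH_1$ and $\cH_2$; for the locally finite $\cH_i$ the proof is entirely self-contained and reduces almost immediately to Theorem~\ref{th:embed-intro} combined with the standard fact that quantifier-free formulas transfer along embeddings.
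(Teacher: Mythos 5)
Your proof is correct and follows essentially the same route as the paper: the first assertion is exactly corollary~\ref{co:embed-V1-fini} (taking $L_0=\Ldeux$ as the common two-element subalgebra), and the second is corollary~\ref{co:embed-V1-sat} together with remark~\ref{re:embed-V1}, which likewise reduce each existential sentence to a finite algebra of the variety (by local finiteness, or by the finite model property for $\cV_1$ and $\cV_2$) and then transfer it to $L$ along the embedding supplied by the first assertion. The only small point to make explicit in the non-locally-finite cases is that one needs the finite model property in the strengthened form of fact~\ref{fa:fmp}, i.e.\ producing a finite algebra satisfying an arbitrary quantifier-free formula (including negated equations), not merely the usual statement about validity of equations.
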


\begin{remark}
  In this paper we do not actually deal with Heyting algebras but with
  their duals, obtained by reversing the order. They are often called
  {\df co\--Heyting algebras} in the literature. Readers familiar
  with Heyting algebras will certainly find annoying this
  reversing of the order. We apologise for this discomfort but there
  are good reasons for doing so. Indeed, the present work has been
  entirely build on a geometric intuition coming from the
  fundamental example\footnote{This geometric intuition also played a
  role in the very beginning of the study of Heyting algebras. Indeed,
  co-Heyting algebras were born Brouwerian lattices in the paper of
  Mckinsey and Tarski \cite{mcki-tars-1946} which originated much of
  the later interest in Heyting algebras. Also the introduction of
  ``slices'' in \cite{hoso-1967} seems inspired by the same geometric
  intuition that we use in this paper. Indeed, the dual of the
  co-Heyting algebra of all subvarieties of an algebraic variety $V$
  belongs to the $d+1$\--th slice if and only if $V$ has dimension $\leq
  d$ (see \cite{darn-junk-2011} for more on this topic).}  of the
  lattice of all subvarieties of an algebraic variety, and their
  counterparts in real algebraic geometry. Such lattices
  are co-Heyting algebras, not Heyting algebras. To see how this
  intuition is used in finding the proofs (and then hidden while writing
  the proofs) look at figure~\ref{fi:glueing} in lemma~\ref{le:S1}.
\end{remark}

\section{Other notation, definitions and prerequisites}%
\label{se:prereq}

We denote by $\llat=\{\ZERO,\UN,\join,\meet\}$ the language of distributive
bounded lattice, these four symbols referring respectively to the
least element, the greatest element, the join and meet operations.
$\lha=\llat\cup\{\to\}$ and $\ltc=\llat\cup\{-\}$ are the language of Heyting
algebras and co-Heyting algebras respectively. Finite joins and meets
will be denoted $\jjoin$ and $\mmeet$, with the natural convention that the
join (resp. meet) of an empty family of elements is $\ZERO$ (resp.
$\UN$). 

The logical connectives `and', `or', and their iterated forms
will be denoted $\conj$, $\disj$, $\cconj$ and $\ddisj$ respectively.

We denote by $\jirr(L)$ the set of {\df join irreducible} elements of
a lattice $L$, that is the elements $a$ of $L$ which can not be written as the
join of any finite subset of $L$ not containing $a$. Of course $\ZERO$
is never join irreducible since it is the join of the empty subset of
$L$. The set $\jirr(L)$ inherits the order induced by $L$.

\paragraph{Dualizing rules.}
In order to help the reader more familiar with Heyting algebras than
co-Heyting algebras, we recommend
the use of the following conversion rules. For any ordered set $L$ we
denote by $L^*$ the {\df dual} of $L$, that is the same set with the
reverse order. If $a$ is an element of $L$ we denote by $a^*$ the same
element {\em seen as en element of $L^*$}, so that we can write for
any $a,b \in L$:
\begin{displaymath}
  a \leq b \iff b^* \leq a^* 
\end{displaymath}
Indeed the star indicates that the second symbol $\leq$ refers to the
order of $L^*$, and the first one to the order of $L$. Similarly if $L$ is a
co-Heyting algebra we can write:
\begin{eqnarray*}
  \ZERO^* = \UN         &\hbox{and}& \UN^* =\ZERO\\
  (a \join b)^* = a^* \meet b^* &\hbox{and}& (a \meet b)^* = a^* \join b^*
\end{eqnarray*}
The minus operation of $\ltc$ stands of course for the dual of the arrow
operation of $\lha$, but beware of the order of the operands:
\begin{displaymath}
  a-b = \min\{c\tq a \leq b \join c\} = (b^*\to a^*)^* 
\end{displaymath}
The topological symmetric difference is defined as:
\begin{displaymath}
  a \bigtriangleup b = (a-b) \join (b-a) = (a^* \leftrightarrow b^*)^*
\end{displaymath}
We will make extensive use of the following relation:
\begin{displaymath}
  b \ll a \iff a-b = a \mbox{ and }b \leq a 
\end{displaymath}
Note that $b \ll a$ and $b\not<a$ if and only if $a=b=\ZERO$, hence $\ll$ is a
strict order on $L \setminus\{\ZERO\}$. Note also that if $a$ is join
irreducible in $L$ (hence non zero) then $b \ll a$ if and only if $b<a$.

\paragraph{The varieties of Maksimova.}
We can now describe the varieties $\cH_1$ to $\cH_8$ introduced by
Maksimova, more exactly the corresponding varieties $\cV_1$ to $\cV_8$ of
co-Heyting algebras. Note that the intuitionistic negation $\lnot \varphi$ being
defined as $\varphi \to \bot$, the corresponding operation in co-Heyting algebras
is $\UN-a$:
\begin{displaymath}
  \UN-a=(a^* \to \UN^*)^*=(a^* \to \ZERO)^*=\big(\lnot (a^*)\big)^*
\end{displaymath}

\begin{itemize}
  \item 
    $\cV_1$ is the variety of all co-Heyting algebras.
  \item
    $\cV_2=\cV_1+\big[(\UN-x) \meet (\UN-(\UN-x))=\ZERO\big]$
    is the dual of the variety of the logic of the weak excluded middle
    ($\lnot x \join  \lnot \lnot x = \UN$).
  \item
    $\cV_3=\cV_1+\big[\big(((\UN-x)\meet x)-y\big)\meet y=\ZERO\big]$
    is the dual of the second slice of Hosoi. With the
    terminology of \cite{darn-junk-2011}, $\cV_3$ is the variety of
    co-Heyting algebras of dimension $\leq 1$. So a co-Heyting algebra
    $L$ belongs to $\cV_3$ if and only if any prime filter of $L$
    which is not maximal is minimal (with respect to inclusion, among
    the prime filters of $L$).  
  \item
    $\cV_4=\cV_3+\big[(x-y)\meet(y-x)\meet(x \bigtriangleup (\UN-y))=\ZERO\big]$
    is the variety generated by the co-Heyting algebra $\Lcinq$
    (see figure~\ref{fi:Lcinq}).
  \item
    $\cV_5=\cV_2+\big[\big(((\UN-x)\meet x)-y\big)\meet y=\ZERO\big]$ 
    is the variety generated by $\Ltrois$ (see figure~\ref{fi:Lcinq}). 
  \item
    $\cV_6=\cV_1+\big[(x-y)\meet(y-x)=\ZERO\big]$
    is the variety generated by the chains. 
  \item
    $\cV_7=\cV_1+\big[(1-x)\meet x=\ZERO\big]$ is the variety of boolean
    algebras (which are exactly the co-Heyting algebras of dimension 
    $\leq 0$). 
  \item
    $\cV_8$ is the trivial variety $\cV_1+\big[\UN=\ZERO\big]$ reduced
    to $\Lun$ (see figure~\ref{fi:Lcinq}).
\end{itemize}
Note that the product of an empty family of
co-Heyting algebras is just $\Lun$.
\begin{figure}[ht]
  \begin{center}
    \newcommand{\noir}[1]{\filldraw #1 circle (.1)}
    \begin{tikzpicture}[scale=.5]
      \noir{(0,0)};
      \draw (0,-1) node{$\Lun$};

      \draw (4,-.5) -- (4,.5);
      \noir{(4,-.5)};
      \noir{(4,.5)};
      \draw (4,-1.5) node{$\Ldeux$};

      \draw (8,-1) -- (8,1);
      \noir{(8,-1)};
      \noir{(8,0)};
      \noir{(8,1)};
      \draw (8,-2) node{$\Ltrois$};
      
      \begin{scope}[xshift=12cm,yshift=-1.5cm]
        \draw (0,0) -- (0,1) -- (-1,2) -- (0,3) -- (1,2) -- (0,1);
        \noir{(0,0)};
        \noir{(0,1)};
        \noir{(-1,2)};
        \noir{(0,3)};
        \noir{(1,2)};
        \noir{(0,1)};
        \draw (0,-1) node{$\Lcinq$};
      \end{scope}
    \end{tikzpicture}
    \caption{\label{fi:Lcinq} Four basic co-Heyting algebras}
  \end{center}
\end{figure}

\paragraph{Model-completion and super-intuitionistic logics.}

For an introduction to the basic notions of first-order
model-theory (language, formulas, elementary classes of structures,
models and existentially closed models of a theory) we
refer the reader to any introductory book, such as \cite{hodg-1997} or
\cite{chan-keis-1990}. 

Every model of a universal theory $T$ embeds in an existentially
closed model. If the class of all
existentially closed models of $T$ is elementary, then the
corresponding theory $\overline{T}$ is called the {\df model
companion} of $T$. The model companion eliminates quantifiers if and
only if $T$ has the amalgamation property, in which case
$\overline{T}$ is called the {\df model completion} of $T$. By abuse
of language we will speak of the model completion of a variety in
place of the model completion of the theory of this variety. 

It is an elementary fact that formulas in the first order
intuitionistic propositional calculus (IPC$^1$) can be considered as
terms (in the usual model-theoretical sense) in the language of
Heyting algebras, and formulas in the second order intuitionistic
propositional calculus (IPC$^2$) as first order formulas in the
language of Heyting algebras. In particular, if a variety of Heyting
algebras has a model completion then it appears, following
\cite{ghil-zawa-1997} that the corresponding
super-intuitionistic logic has the property that IPC$^2$ is
interpretable in IPC$^1$, in the sense of Pitts \cite{pitt-1992}. 

Finally let us recall the criterion for model completion which
makes the link with theorems~\ref{th:EC-intro} and
\ref{th:embed-intro}.

\begin{fact}\label{fa:mod-comp}
  A theory $\overline{T}$ is the model completion of a universal
  theory $T\subseteq\overline{T}$ if and only if it satisfies the two
  following conditions.
  \begin{enumerate}
    \item
      Every existentially closed model of $T$ is a model of
      $\overline{T}$. 
    \item 
      Given a model $L$ of $\overline{T}$, a finitely generated
      substructure $L_0$ of $L$ and a finitely generated model $L_1$
      of $T$ containing $L_0$, there is an embedding of $L_1$ into an
      elementary extension of $L$ which fixes $L_0$ pointwise. 
  \end{enumerate}
\end{fact}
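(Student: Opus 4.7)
My plan is to prove both directions of this standard model-theoretic equivalence by combining Robinson's test for model completeness with the diagram method for quantifier elimination. Throughout I use that since $T$ is universal, substructures of models of $T$ are themselves models of $T$, and that every model of $T$ embeds into an existentially closed one.

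For the forward direction, assume $\overline{T}$ is the model completion of $T$. Condition (1) is immediate because the models of the model companion of $T$ are precisely the existentially closed models of $T$. For condition (2), first extend $L_1$ to some $L_1' \models \overline{T}$. The desired embedding is produced by showing that the union of the elementary diagram $\Th(L,L_0)$ and the atomic diagram $\mathrm{Diag}(L_1)$ is consistent. Given a finite conjunction $\phi(\bar a, \bar b)$ of atomic and negated atomic sentences from $\mathrm{Diag}(L_1)$ (with $\bar a$ from $L_0$ and $\bar b$ fresh constants naming elements of $L_1$), quantifier elimination produces a quantifier-free $\psi(\bar x)$ equivalent modulo $\overline{T}$ to $\exists \bar y\, \phi(\bar x, \bar y)$. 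The latter holds in $L_1'$, hence $\psi(\bar a)$ holds in $L_0$ (since $L_0 \subseteq L_1'$ as a substructure and $\psi$ is quantifier-free), hence in $L$; applying QE once more inside $L \models \overline{T}$ yields $L \models \exists \bar y\, \phi(\bar a, \bar y)$. A model of the consistent union then provides the required elementary extension of $L$ containing an isomorphic copy of $L_1$ over $L_0$.

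For the backward direction, assume (1) and (2). First, every model of $T$ embeds in an existentially closed one, which by (1) is a model of $\overline{T}$, so $\overline{T}$ is a model companion of $T$. To see that $\overline{T}$ is model complete I apply Robinson's test: given $L \subseteq L'$ both modelling $\overline{T}$, a tuple $\bar a$ in $L$, and an existential $\exists \bar y\, \phi(\bar a, \bar y)$ true in $L'$ with witnesses $\bar b$, set $L_0 = \gen{\bar a}$ and $L_1 = \gen{\bar a, \bar b}$. Then $L_1$ is a finitely generated substructure of $L'$, hence a model of $T$ by universality, and condition (2) produces an elementary extension $L^* \succ L$ into which $L_1$ embeds over $L_0$; elementarity of $L \prec L^*$ transfers $\exists \bar y\, \phi(\bar a, \bar y)$ back to $L$. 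Finally, model completeness of $\overline{T}$ combined with the joint embedding supplied by (2) gives amalgamation of $\overline{T}$-models over common finitely generated substructures, so by the substructure-completeness criterion $\overline{T}$ eliminates quantifiers and is therefore the model completion of $T$.

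The main subtlety is bookkeeping: one must invoke universality of $T$ at the right moments (to ensure that the finitely generated substructures entering Robinson's test are models of $T$, so that (2) applies), and set up the forward compactness argument on the two diagrams carefully. Once these points are settled, the argument is entirely routine.
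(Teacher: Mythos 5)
The paper offers no proof of this fact --- it is recalled as a standard criterion (Robinson's test combined with the characterisation of quantifier elimination by substructure-completeness) --- so there is no argument of the authors' to compare yours against line by line. Your proof is the standard one and is essentially correct; three points deserve tightening. First, in the forward direction the set to be shown consistent is the full elementary diagram $\Th(L,L)$ (names for \emph{all} elements of $L$) together with the atomic diagram of $L_1$, with the names for elements of $L_0$ shared: a model of $\Th(L,L_0)\cup\mathrm{Diag}(L_1)$ is only elementarily equivalent to $L$ over $L_0$ and need not be an elementary extension of $L$. Your finite-satisfiability computation is unaffected, since each finite fragment is realised inside $L$ itself once witnesses for $\exists \bar y\,\phi(\bar a,\bar y)$ are found there. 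Second, the last step of the backward direction is too loose: what (2) supplies is amalgamation (not joint embedding) of a model of $\overline{T}$ with a finitely generated model of $T$ over a finitely generated common substructure, and the passage to quantifier elimination should be spelled out --- given $M,N\models\overline{T}$ with a common substructure generated by $\bar a$ and an existential $\exists\bar y\,\phi(\bar a,\bar y)$ true in $M$ with witnesses $\bar b$, apply (2) with $L=N$, $L_0=\gen{\bar a}$, $L_1=\gen{\bar a,\bar b}\subseteq M$ to transfer the formula to $N$; together with model completeness this yields substructure-completeness, hence quantifier elimination. Third, both the statement and your proof tacitly assume that every model of $\overline{T}$ is a model of $T$: you need $L'\models T$ before universality lets you conclude that $\gen{\bar a,\bar b}$ is a model of $T$, and without this assumption condition (2) can be vacuous for a pathological $L\models\overline{T}$. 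This is harmless here, since in all the paper's applications $\overline{T}$ is $T$ augmented by further axioms, but it should be said once.
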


\paragraph{The finite model property.} 

A variety $\cV$ of co-Heyting algebras has the finite model property
if any equation valid on every finite algebra in $\cV$ is valid on
every algebra of $\cV$. 

\begin{proposition}\label{pr:fmp}
  For a variety $\cV$ of co-Heyting algebras the following properties
  are equivalent. 
  \begin{enumerate}
    \item\label{it:fmp1}
      $\cV$ has the finite model property.
    \item\label{it:fmp2}
      For every quantifier-free $\ltc$\--formula $\varphi(x)$ and every
      algebra $L$ in $\cV$ such that $L\models\exists x\; \varphi(x)$, there exists a
      finite algebra $L'$ in $\cV$ such that $L'\models\exists x\; \varphi(x)$.
  \end{enumerate}
\end{proposition}

\begin{proof}
For every equation $\theta(x)$, if there is an algebra $L$ in $\cV$ on
which $\theta(x)$ is not valid then condition~\ref{it:fmp2} applied to
$\varphi(x)\equiv\lnot \theta(x)$ gives a finite algebra in $\cV$ on which $\theta(x)$ is not
valid. This proves that (\ref{it:fmp2})$\Rightarrow$(\ref{it:fmp1}). For the
converse, see \cite{darn-junk-2011} proposition~8.1. 
\end{proof}

The finite model property holds obviously for every locally finite
variety, but also\footnote{For example corollary 2.2.1 of
\cite{mcka-1968} applies to $\cV_2$, as well as to $\cV_1$.} for
$\cV_1$ and $\cV_2$. We combine it with a bit of model-theoretic
non-sense in the following lemmas. 

\begin{lemma}\label{le:non-sense-fin}
  Let $\cV$ be a variety of co-Heyting algebras having the finite
  model property. Let $\theta(x)$ and $\phi(x,y)$ be quantifier-free
  $\ltc$\--formulas. Assume that for every finite co-Heyting algebra
  $L_0$ and every tuple $a$ of elements of $L_0$ such that $L_0\models\theta(a)$, there
  exists an extension $L_1$ of $L_0$ which satisfies $\exists y\;\phi(a,y)$. Then
  every algebra existentially closed in $\cV$ satisfies the
  following axiom:
  \begin{displaymath}
    \forall x\;(\theta(x) \longrightarrow \exists y\;\phi(x,y)) 
  \end{displaymath}
\end{lemma}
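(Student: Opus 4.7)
The plan is to show that if $L$ is existentially closed in $\cV$ and $a$ is a tuple of elements of $L$ with $L\models\theta(a)$, then $L\models\exists y\,\phi(a,y)$. Since $\exists y\,\phi(a,y)$ is an existential formula with parameters in $L$, the existentially closed hypothesis reduces the task to producing \emph{some} extension $L^*\supseteq L$ with $L^*\in\cV$ and $L^*\models\exists y\,\phi(a,y)$: the formula then reflects back down to $L$.

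To build $L^*$, I would invoke compactness. Introducing a fresh constant $c$, consider the theory $T_{\cV}\cup\mathrm{Diag}(L)\cup\{\phi(a,c)\}$, where $\mathrm{Diag}(L)$ is the atomic diagram of $L$ (with one new constant per element). Any model of this theory is, after the obvious interpretation, a co-Heyting algebra in $\cV$ extending $L$ in which $\phi(a,\cdot)$ is realised. By compactness it suffices to show that every finite subset is consistent.

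A finite subset mentions only finitely many constants $b_1,\ldots,b_n\in L$ (among which I may assume the entries of $a$ appear) and amounts to a conjunction of literals $\Delta(b_1,\ldots,b_n)$, all of which hold in $L$. In particular $L\models\exists\bar{x}\,(\Delta(\bar{x})\wedge\theta(\bar{x}_a))$, where $\bar{x}_a$ is the subtuple of variables corresponding to $a$, and this is a purely quantifier-free $\ltc$-formula inside an existential quantifier. By the finite model property in the strong form of fact~\ref{fa:fmp}, this existential is witnessed in a \emph{finite} algebra $L_0\in\cV$: there exist $b_1',\ldots,b_n'\in L_0$ with $L_0\models\Delta(\bar{b}')\wedge\theta(a')$, where $a'$ is the subtuple of $\bar{b}'$ matching $a$. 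Now apply the hypothesis of the lemma to the finite algebra $L_0$ and the tuple $a'$: there is an extension $L_0\hookrightarrow L_1$ in $\cV$ and some $c'\in L_1$ with $L_1\models\phi(a',c')$. Interpreting each constant $b_i$ by $b_i'$ and $c$ by $c'$ makes $L_1$ into a model of the chosen finite fragment of $T_{\cV}\cup\mathrm{Diag}(L)\cup\{\phi(a,c)\}$, as required.

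The only real content is the second step: using fact~\ref{fa:fmp} to replace the finite fragment of $L$'s diagram by a concrete finite algebra of $\cV$ on which the external hypothesis of the lemma can be applied. The rest is routine model-theoretic bookkeeping (compactness, then preservation of existential formulas on passing down to an existentially closed substructure). A small point to verify along the way is that $\Delta(\bar{x})\wedge\theta(\bar{x}_a)$ is indeed quantifier-free, which is immediate since both conjuncts are; this is what licenses the use of fact~\ref{fa:fmp}.
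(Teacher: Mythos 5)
Your proof is correct and follows essentially the same route as the paper's: both pass to a finite fragment of the diagram of $L$ conjoined with $\theta(a)$, use fact~\ref{fa:fmp} to realise this quantifier-free formula in a finite algebra $L_0\in\cV$, apply the hypothesis to obtain $L_1\supseteq L_0$ realising $\phi$, and conclude by compactness and existential closedness. The only cosmetic differences are your use of a named witness constant $c$ and the explicit inclusion of $T_{\cV}$ in the theory (which, like the paper's final appeal to existential closedness, tacitly requires the extension $L_1$ to lie in $\cV$ --- as it does in every application of the lemma).
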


\begin{proof}
Let $L$ be an existentially closed co-Heyting algebra which satisfies
$\theta(a)$ for some tuple $a$. Let $\Sigma$ be its quantifier-free diagram,
that is the set of all atomic and negatomic $\ltc$\--formulas (with
parameters) satisfied in $L$. Let $\Sigma_0$ be an arbitrary finite subset
of $\Sigma$. The conjunction of $\theta(a)$ and the elements of $\Sigma_0$ is a
quantifier-free formula (with parameters) $\Delta(a,b)$. Since 
$L\models\exists x,y\;\Delta(x,y)$ and $\cV$ has the finite model property, by
proposition~\ref{pr:fmp} there exists a finite co-Heyting algebra $L_0$ and a
tuple $(a_0,b_0)$ of elements of $L_0$ such that $L_0\models\Delta(a_0,b_0)$. In
particular $L_0\models\theta(a_0)$ hence by assumption $L_0$ admits an extension
$L_1$ which satisfies $\exists y\;\phi(a_0,y)$. So $L_1$ is a model of this
formula and of $\Sigma_0$ (because $\Sigma_0$ is quantifier free and already
satisfied in $L_0$). We have proved that the union of $\Sigma$ and $\exists
y\;\phi(a,y)$ is finitely satisfiable hence by the model-theoretic
compactness theorem, it admits a model $L'$ in which $L$ embeds
naturally. Since $L$ is existentially closed it follows that $L$
itself satisfies $\exists y\;\phi(a,y)$.
\end{proof}

\begin{lemma}\label{le:non-sense-prod}
  Let $\cV$ be a variety of co-Heyting algebras having the finite
  model property. Let $\theta'(x)$ and $\phi'(x,y)$ be $\ltc$\--formulas
  that are conjunctions of equations. Assume that:
  \begin{enumerate}
    \item
      There is a subclass $\cC$ of $\cV$ such that a finite co-Heyting
      algebra belongs to $\cV$ if and only if it embeds into the
      direct product of a finite (possibly empty) family of algebras
      in $\cC$. 
    \item 
      For every algebra $L$ in $\cC$ and every $a=(a_1,\dots,a_m)\in L^m$
      such that $L\models\theta'(a)$ there is an extension $L'$ of $L$ in $\cV$
      and some $b=(b_1,\dots,b_n)\in L'^n$ such that $L'\models \phi'(a,b)$. If
      moreover $a_1\neq\ZERO$ then one can require all the $b_i$'s to be
      non zero.
  \end{enumerate}
  Then every algebra existentially closed in $\cV$ satisfies:
  \begin{displaymath}
    \forall x\;\left[\left(\theta'(x)\conj x_1\neq\ZERO\right) \to \exists y\;\left(\phi'(x,y)\conj \cconj_{i\leq
    n}y_i\neq\ZERO\right)\right]
  \end{displaymath}
\end{lemma}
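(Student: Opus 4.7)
The plan is to follow the same compactness-plus-finite-model-property skeleton as in Lemma~\ref{le:non-sense-fin}, using assumption (1) to reduce to algebras in $\cC$ and assumption (2) coordinatewise in a finite direct product. Concretely, let $L$ be existentially closed in $\cV$ with $L\models\theta'(a)\conj a_1\neq\ZERO$, and let $\Sigma$ be the quantifier-free diagram of $L$. I would show that $\Sigma\cup\{\exists y\,(\phi'(a,y)\conj\cconj_{i\leq n}y_i\neq\ZERO)\}$ is finitely satisfiable in $\cV$; then compactness yields an extension of $L$ in $\cV$ witnessing the existential formula, and since $L$ is existentially closed, $L$ itself satisfies it.

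For finite satisfiability, fix any finite $\Sigma_0\subseteq\Sigma$; conjoining $\theta'(a)$, $a_1\neq\ZERO$ and $\Sigma_0$ gives a quantifier-free formula $\Delta(a,c)$ satisfied in $L$. By the finite model property (fact~\ref{fa:fmp}) there is a finite $L_0\in\cV$ with a tuple $(a_0,c_0)$ such that $L_0\models\Delta(a_0,c_0)$; in particular $L_0\models\theta'(a_0)$ and $a_{0,1}\neq\ZERO$. Assumption (1) then embeds $L_0$ into a finite product $\prod_{i=1}^k C_i$ with $C_i\in\cC$, and since $a_{0,1}\neq\ZERO$ at least one coordinate $j$ satisfies $\pi_j(a_{0,1})\neq\ZERO$.

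Next I would apply assumption (2) in each factor. Because $\theta'$ is a conjunction of equations, it is preserved under homomorphisms, so $C_i\models\theta'(\pi_i(a_0))$ for every $i$. Assumption (2) gives, for each $i$, an extension $D_i\supseteq C_i$ in $\cV$ and a tuple $b^{(i)}\in D_i^n$ with $D_i\models\phi'(\pi_i(a_0),b^{(i)})$; for $i=j$, since $\pi_j(a_{0,1})\neq\ZERO$, the stronger clause lets me insist that every component of $b^{(j)}$ is non-zero. The product $M=\prod_{i=1}^k D_i$ lies in $\cV$ (varieties are closed under direct products) and contains $\prod C_i$, hence $L_0$. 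Setting $b=(b^{(1)},\dots,b^{(k)})\in M^n$, the equational formula $\phi'(a_0,b)$ holds componentwise and thus in $M$; and each $b_\ell$ has non-zero $j$-th coordinate, so $b_\ell\neq\ZERO$ in $M$. Thus $M$ is an extension of $L_0$ in $\cV$ that witnesses $\Delta(a_0,c_0)\conj\exists y\,(\phi'(a_0,y)\conj\cconj_\ell y_\ell\neq\ZERO)$, which is exactly what finite satisfiability requires.

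The only delicate point, and the reason the hypothesis has the asymmetric form ``$a_1\neq\ZERO\Rightarrow$ all $b_i\neq\ZERO$'', is the bookkeeping in the preceding paragraph: in a product, a tuple is non-zero as soon as \emph{some} coordinate is non-zero, so it suffices to make every $b_\ell$ non-zero in one well-chosen factor, namely the factor $j$ where $\pi_j(a_{0,1})\neq\ZERO$. For the other factors one only needs the bare existence of a witness $b^{(i)}$, which is what the weak form of (2) provides. Once this has been arranged the rest is routine compactness, as in Lemma~\ref{le:non-sense-fin}.
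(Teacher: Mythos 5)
Your proposal is correct and follows essentially the same route as the paper: the paper verifies, for each finite $L_0$ in $\cV$, the existence of an extension witnessing $\phi'(a,y)\conj\cconj_i y_i\neq\ZERO$ by decomposing $L_0$ into factors from $\cC$, applying hypothesis (2) coordinatewise, and using the coordinate where $a_1$ is non-zero to force all the $b_i$ non-zero, then concludes by invoking Lemma~\ref{le:non-sense-fin}. The only cosmetic difference is that you inline the compactness/finite-model-property argument of Lemma~\ref{le:non-sense-fin} rather than citing it.
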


Although somewhat tedious, this lemma will prove to be helpful for the
varieties $\cH_2$, $\cH_4$, $\cH_5$ and $\cH_6$.

\begin{proof}
Let $L$ be a finite algebra in $\cV$ and $a=(a_1,\dots,a_m)\in L^m$. Assume
that $L\models\theta'(a)\conj a_1\neq\ZERO$. 

By assumption there are $L_1,\dots,L_r$ in $\cC$ such that $L$ embeds
into the direct product of the $L_j$'s. So each $a_i$ can be
identified with $(a^1_i,\dots,a^r_i)\in L_1\times\cdots\times L_r$. 

For every $j \leq r$ let $a^j=(a^j_1,\dots,a^j_m)\in L_j^m$. Since $\theta'(x)$ is
a conjunction of equations and $L\models\theta'(a)$, we have $L_j\models\theta'(a^j)$. Thus by
assumption there is an extension $L'_j$ of $L_j$ and a tuple
$b^j=(b^j_1,\dots,b^j_n)\in L'^n_j$ such that $L'_j\models\phi'(a^j,b^j)$. Moreover
if $a^j_1\neq\ZERO$ then we do require $b^j_i\neq\ZERO$ for every $i \leq n$. 

Let $L'$ be the direct product of the $L'_j$'s. For every $i \leq n$ let
$b_i=(b^1_i,\dots,b^r_i)$ and $b=(b_1,\dots,b_n)\in L'^n$. The algebra $L'$ is an
extension of $L$ in $\cV$, and since $\phi'(x,y)$ is a conjunction of
equation by construction $L'\models\phi'(a,b)$. 

Moreover $a_1=(a^1_1,\dots,a^r_1)$ is non zero, so there is an index $j \leq
r$ such that $a^j_1 \neq 0$. Then by construction for every $i\leq n$, 
$b^j_i\neq\ZERO$ hence $b_i$ is non zero. 

So we can apply lemma~\ref{le:non-sense-fin} to the variety $\cV$ with
the quantifier free formulas $\theta(x)$ and $\phi(x,y)$ defined by:
\begin{displaymath}
  \theta(x)\equiv\theta'(x)\conj x_1\neq\ZERO\quad\mbox{ and }\quad
  \phi(x,y)\equiv\phi'(x,y)\conj \cconj_{i\leq n}y_i\neq\ZERO
\end{displaymath}
\end{proof}

\paragraph{Decreasing subsets and finite duality.}

For any element $a$ and any subset $A$ of an ordered set $E$
we let: 
\begin{displaymath}
  a^\downarrow=\{b \in X\tq b \leq a\}\qquad\mbox{and}\qquad A\downarrow=\bigcup_{a \in A}a^\downarrow
\end{displaymath}
A {\df decreasing subset} of
$E$ is a subset such that $A=A\downarrow$. The family $\dec(E)$ of all decreasing subsets
of $E$ are the closed sets of a topology on $E$, hence a co-Heyting
algebra with operations:
\begin{displaymath}
  A \join B = A \cup B\qquad A \meet B = A \cap B\qquad A-B=(A\setminus B)\downarrow
\end{displaymath}
Its completely join irreducible elements are precisely the 
decreasing sets $x^\downarrow$ for $x$ ranging over $E$. 

It is folklore that if $L$ is a finite co-Heyting algebra, then the
map $\iota_L:a\mapsto a^\downarrow\cap\jirr(L)$ is an $\ltc$\--isomorphism from $L$ to the
family $\dec(\jirr(L))$ of all decreasing subsets of $\jirr(L)$, whose
inverse is the map $A\mapsto\jjoin A$. This actually defines a contravariant
duality between the category of finite co-Heyting algebras and the
category of finite posets with morphisms the increasing maps $\psi:I'\to I$
satisfying the property:
\begin{description}
  \item[Up:] $\forall x'\in I'$, $\psi(x'^\uparrow)\subseteq \psi(x')^\uparrow$.
\end{description}
This property can be rephrased as: 
\begin{displaymath}
  \forall x'\in I',\ \forall x\in I\ \big[ \psi(x')\leq x \Rightarrow 
  \exists y'\in I',\ \big( x'\leq y' \mbox{ and }x=\psi(y') \big)\big]
\end{displaymath}
For the lack of a reference we provide here a brief overview of this
duality, which can be seen as a restriction of the classical Stone's
duality between distributive lattices and their prime filter spectrum.
The point is that condition Up characterizes among increasing maps
(which are exactly the spectral maps, since we restrict to finite
spectral spaces) those which come from an $\ltc$\--morphism. 
\begin{itemize}
  \item
    Given a map $\psi:I'\to I$ as above we consider $[\psi]:\dec(I)\to \dec(I')$
    defined by $[\psi](A)=\psi^{-1}(A)$. Note that $\psi^{-1}(A)$ is indeed a
    decreasing subset of $I'$ because $A$ itself is decreasing and $\psi$
    is an increasing map. 
  \item 
    Conversely, given an $\ltc$\--morphism $\varphi:L\to L'$ of finite
    co-Heyting algebras, we consider $[\varphi]:\jirr(L')\to\jirr(L)$ defined
    by $[\varphi](x')=\min(\varphi^{-1}(x'^\uparrow))$. Note that $x'^\uparrow$ is a prime
    filter of $L'$ hence $\varphi^{-1}(x'^\uparrow)$ is a prime filter of $L$,
    which ensures that its generator $\min(\varphi^{-1}(x'^\uparrow))$ is indeed a
    join irreducible element of $L$. 
\end{itemize}

\begin{fact}\label{fa:dual-I2L}
  Let $\psi:I'\to I$ be an increasing map between finite posets
  satisfying condition Up. Then $[\psi]:\dec(I)\to\dec(I')$ is an
  $\ltc$\--morphism. Moreover $\psi$ is surjective if and only if $[\psi]$
  is injective.
\end{fact}

\begin{fact}\label{fa:dual-L2I}
  Conversely, let $\varphi:L\to L'$ an $\ltc$\--morphism between finite
  co-Heyting algebras. Then $[\varphi]:\jirr(L')\to\jirr(L)$ is an increasing
  map which satisfies condition Up. Moreover $\varphi$ is injective if and
  only if $[\varphi]$ is surjective.
\end{fact}

The proofs are good exercises that we leave to the reader. On the
other hand we provide a self-contained proof of the next proposition
which summarises the only parts of this duality that we will
use\footnote{We will actually use facts~\ref{fa:dual-I2L} and
\ref{fa:dual-L2I} in the proof of the implication
(\ref{it:card})$\Rightarrow$(\ref{it:min}) of corollary~\ref{co:mini-prim}. But
from this corollary we will only use the reverse implication
(\ref{it:prim})$\Rightarrow$(\ref{it:card}), which does not require these facts.}.
Since it provides a flexible tool to build an extension of a finite
co-Heyting algebra with prescribed
conditions, it will play a central role in our constructions. 

\begin{proposition}\label{pr:finite-injection}
  Let $L$ be a finite co-Heyting algebra and $\cI$ an ordered set.
  Assume that there is a surjective increasing map $\pi$ from $\cI$ onto
  $\jirr(L)$ which satisfies condition Up.
  Then there exists an $\ltc$\--embedding $\varphi$ of $L$ into
  $\dec(\cI)$ such that\footnote{Note that the compositum $\pi\circ\varphi$ is
  not defined. In this proposition $\varphi(a)$ is a decreasing subset of
  $\cI$ and $\pi(\varphi(a))=\{\pi(\xi)\tq\xi\in\varphi(a)\}$.} $\pi(\varphi(a))=a^\downarrow\cap\jirr(L)$ for
  every $a \in L$. 
\end{proposition}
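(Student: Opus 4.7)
The plan is to define $\varphi$ directly by the natural formula
\[
\varphi(a) := \pi^{-1}\bigl(a^\downarrow \cap \jirr(L)\bigr) = \{\xi \in \cI : \pi(\xi) \leq a\},
\]
and then verify all the required properties. That each $\varphi(a)$ is decreasing in $\cI$ is immediate from the fact that $\pi$ is increasing, and the prescription $\pi(\varphi(a)) = a^\downarrow \cap \jirr(L)$ follows at once from the surjectivity of $\pi$ onto $\jirr(L)$. Injectivity of $\varphi$ is then automatic from the folklore isomorphism $\iota_L : L \cong \dec(\jirr(L))$ recalled just above the proposition: if $\varphi(a) = \varphi(b)$, applying $\pi$ gives $a^\downarrow \cap \jirr(L) = b^\downarrow \cap \jirr(L)$, hence $a = b$.

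The verifications that $\varphi$ preserves $\ZERO$, $\UN$ and $\meet$ are routine (preimages commute with intersection); for $\join$ one uses that each $\pi(\xi)$ is join-irreducible, so that $\pi(\xi) \leq a \join b$ forces $\pi(\xi) \leq a$ or $\pi(\xi) \leq b$. The only nontrivial check, and what I expect to be the main obstacle, is preservation of the co-Heyting difference, namely
\[
\varphi(a - b) \;=\; (\varphi(a) \setminus \varphi(b))\downarrow.
\]
I would first record, as a lemma valid in any finite distributive lattice, the identity
\[
a - b \;=\; \jjoin\bigl\{x \in \jirr(L) : x \leq a \text{ and } x \not\leq b\bigr\},
\]
which follows from the adjunction defining $-$ together with the fact that every element of $L$ is the join of the join-irreducibles below it. The inclusion $(\varphi(a) \setminus \varphi(b))\downarrow \subseteq \varphi(a-b)$ is then essentially formal, using once more the join-irreducibility of the $\pi(\xi)$. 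The reverse inclusion is exactly where the lifting hypothesis is used: given $\xi \in \varphi(a-b)$, the identity above yields some $x \in \jirr(L)$ with $\pi(\xi) \leq x \leq a$ and $x \not\leq b$, and applying the lifting hypothesis to the pair $(\xi, x)$ produces $\xi' \geq \xi$ in $\cI$ with $\pi(\xi') = x$; then $\xi' \in \varphi(a) \setminus \varphi(b)$ and therefore $\xi \in (\varphi(a) \setminus \varphi(b))\downarrow$. This is the single place where the lifting hypothesis is really needed, but it is indispensable.

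For uniqueness, one direction is immediate: any $\ltc$-embedding $\varphi'$ satisfying the stated condition has $\varphi'(a) \subseteq \varphi(a)$, since $\xi \in \varphi'(a)$ implies $\pi(\xi) \in a^\downarrow \cap \jirr(L)$ and hence $\pi(\xi) \leq a$. The reverse inclusion is the delicate direction, and I would approach it by decomposing $a = \jjoin(a^\downarrow \cap \jirr(L))$ and exploiting the morphism properties of $\varphi'$ together with the lifting hypothesis to show that each $\xi \in \varphi(a)$ must already belong to $\varphi'(a)$; this essentially amounts to pinning down $\varphi'$ on join-irreducibles of $L$ and then propagating through joins. This is the part of the argument where I expect some genuine care to be needed.
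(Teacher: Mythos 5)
Your existence argument is correct and is essentially the paper's own proof: the paper defines the same map (it writes it as $\varphi(a)=\pi^{-1}(a^\downarrow\cap\jirr(L))$, routed through the isomorphism $L\cong\dec(\jirr(L))$), gets injectivity from surjectivity of $\pi$ and the lattice operations from the fact that preimages commute with unions and intersections, and uses the lifting hypothesis exactly where you do, namely for the inclusion $\varphi(a-b)\subseteq(\varphi(a)\setminus\varphi(b))\downarrow$. The only cosmetic difference is that the paper obtains the easy inclusion $(\varphi(a)\setminus\varphi(b))\downarrow\subseteq\varphi(a-b)$ from the adjunction characterizing $-$ rather than from your (correct) identity $a-b=\jjoin\{x\in\jirr(L)\tq x\leq a,\ x\nleq b\}$.

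The gap is in the uniqueness part, and your suspicion that ``genuine care is needed'' for the inclusion $\varphi(a)\subseteq\varphi'(a)$ is an understatement: that inclusion is not provable from the stated hypotheses, so the strategy of pinning $\varphi'$ down on join-irreducibles cannot succeed. Take $L=\Ltrois$, the chain $\ZERO<c<\UN$, and $\cI=\{\gamma_1,\gamma_2,\xi\}$ ordered by $\gamma_1<\xi$, $\gamma_2<\xi$ with $\gamma_1,\gamma_2$ incomparable, and $\pi(\gamma_1)=\pi(\gamma_2)=c$, $\pi(\xi)=\UN$; the hypotheses of the proposition hold. Besides the canonical $\varphi$ (with $\varphi(c)=\{\gamma_1,\gamma_2\}$), the map $\varphi'$ sending $\ZERO\mapsto\emptyset$, $c\mapsto\gamma_1^\downarrow=\{\gamma_1\}$, $\UN\mapsto\cI$ is also an $\ltc$\--embedding: it is a chain mapped onto a chain, and the only nontrivial difference is $\varphi'(\UN)-\varphi'(c)=(\cI\setminus\{\gamma_1\})\downarrow=\cI=\varphi'(\UN-c)$ since $\gamma_1<\xi$. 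It satisfies $\pi(\varphi'(a))=a^\downarrow\cap\jirr(L)$ for every $a$ (indeed $\pi(\{\gamma_1\})=\{c\}$), yet $\varphi'(c)\neq\varphi(c)$. So the uniqueness assertion fails as literally stated; the paper's own proof silently omits it, and only the existence of the specific embedding $a\mapsto\pi^{-1}(a^\downarrow\cap\jirr(L))$ is used in the sequel. If you want a true uniqueness statement, strengthen the side condition to $\varphi(a)=\pi^{-1}(a^\downarrow\cap\jirr(L))$, under which uniqueness is immediate; with the condition as given, you should either produce such a counterexample or drop the uniqueness claim rather than try to prove it.
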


\begin{proof}
Let $L^\downarrow$ denote $\dec(\jirr(L))$ and $L'^\downarrow$ denote $\dec(\cI)$.
The map $\varphi^\downarrow:L^\downarrow \to L'^\downarrow$ defined by $\varphi^\downarrow(A)=\pi^{-1}(A)$ is clearly a
morphism of bounded lattices ($\varphi^\downarrow(A) \in L'^\downarrow$ for every $A \in L^\downarrow$
because $\pi$ is increasing) which is injective because $\pi$ is
surjective. Clearly $\pi(\varphi^\downarrow(A))=A$ for every $A \in L^\downarrow$ so it remains to
check that $\varphi^\downarrow$ is an
$\ltc$\--embedding of $L^\downarrow$ into $L'^\downarrow$ in order to conclude that
$\varphi:a\mapsto\varphi^\downarrow(a^\downarrow\cap\jirr(L))$ is the required $\ltc$\--embedding of $L$ into
$\dec(\cI)$. 

Let $A,B \in L^\downarrow$ and choose any $x'\in \varphi^\downarrow(A-B)=\pi^{-1}\big(( A\setminus B)\downarrow\big)$. 
Then $\pi(x')\leq x$ for some $x\in A\setminus B$. Condition Up gives 
$y \in \cI$ such that $\pi(y)=x$ and $x' \leq y$ so $x' \in \pi^{-1}(A\setminus B)\downarrow$.
\begin{displaymath}
  \pi^{-1}(A\setminus B)\downarrow = \big(\pi^{-1}(A) \setminus \pi^{-1}(B)\big)\downarrow
                = \big(\varphi^\downarrow(A) \setminus \varphi^\downarrow(B)\big)\downarrow 
                = \varphi^\downarrow(A)-\varphi^\downarrow(B)
\end{displaymath}
We conclude that $\varphi^\downarrow(A-B)\subseteq\varphi^\downarrow(A)-\varphi^\downarrow(B)$. The reverse inclusion is
immediate since $\varphi^\downarrow(A)\subseteq\varphi^\downarrow\big((A-B)\cup B)\big)=\varphi^\downarrow(A-B)\cup\varphi^\downarrow(B)$
implies that $\varphi^\downarrow(A)-\varphi^\downarrow(B)\subseteq\varphi^\downarrow(A-B)$. So $\varphi^\downarrow(A-B)=\varphi^\downarrow(A)-\varphi^\downarrow(B)$.
\end{proof}

\section{Minimal finite extensions}
\label{se:minim-exten}

This section is devoted to the study of minimal finite proper
extensions of a finite co-Heyting algebra $L_0$. We are going to show
(see remark~\ref{re:corr-sign-ext}) that they are in one-to-one
correspondence with {\df signatures} in $L_0$, that is triples
$(g,H,r)$ such that $g$ is a join irreducible element of $L_0$,
$H=\{h_1,h_2\}$ is a set of one or two elements of $L_0$ and:
\begin{itemize}
  \item
    either $r=1$ and $h_1=h_2<g$;
  \item 
    or $r=2$ and $h_1 \join h_2$ is the unique predecessor of $g$ (both
    possibilities, $h_1=h_2$ and $h_1 \neq h_2$ may occur in this case,
    see example~\ref{ex:prim-ext-sgn}).
\end{itemize}
Let $L$ be an $\ltc$\--extension of $L_0$ and $x\in L$. We introduce the
following notation.
\begin{itemize}
\item
For every $a\in L_0$, $a^-=\jjoin\{b\in L_0\tq b<a\}$.
\item
$L_0\gen x$ denotes the $\ltc$\--substructure of $L$ generated by
$L_0\cup\{x\}$.
\item
$g(x,L_0)=\mmeet\{a\in L_0\tq x\leq a\}$. 
\end{itemize}
Clearly $a\in\jirr(L_0)$ if and only if $a^-$ is the unique predecessor
of $a$ in $L_0$ (otherwise $a^-=a$). We say that a tuple $(x_1,x_2)$
of elements of $L$ is {\df primitive over $L_0$} if they are
both\footnote{The other conditions imply that $x_1$ and $x_2$ do not
belong to $L_0$, provided they are both non zero.} not
in $L_0$ and there exists $g\in\jirr(L_0)$ such that: 
\begin{description}
  \item[P1:]
  $g^-\meet x_1$ and $g^-\meet x_2$ belong to $L_0$.
  \item[P2:]
  One of the following holds: 
  \begin{enumerate}
    \item
      $x_1=x_2$ and $g^-\meet x_1\ll x_1 \ll g$.
    \item
      $x_1\neq x_2$ and $x_1\meet x_2\in L_0$, $g-x_1=x_2$, $g-x_2=x_1$.
  \end{enumerate}
\end{description}

\begin{remark}\label{re:g-prim}
  If $(x_1,x_2)$ is a primitive tuple over $L_0$, and $g\in\jirr(L_0)$
  satisfies P1 and P2, then $g=g(x_1,L_0)=g(x_2,L_0)$. Indeed, $g(x_i,
  L_0) \leq g$ because $x_i \leq g$. On the other hand $g(x_i,L_0) \not< g$
  since otherwise $g(x_i,L_0) \leq g^-$ hence {\it a fortiori} $x_i\leq g^-$
  and finally $x_i = x_i \meet g^- \in L_0$, a contradiction. 
\end{remark}

If an extension $L$ of $L_0$ is generated over $L_0$ by a primitive
tuple $(x_1,x_2)$ we call it a {\df primitive extension}. By the above
remark, $(g,\{g^- \meet x_1,g^- \meet x_2\}, \card\{x_1,x_2\})$ is then a signature in
$L_0$ which is determined by $(x_1,x_2)$ (actually by any of $x_1$, $x_2$).
We call it the {\df signature of the tuple $(x_1,x_2)$} (or simply of
$x_1$).

\begin{example}\label{ex:prim-ext-sgn}
  The following primitive extensions and their elements $a,b,c\dots$ are
  shown in figure~\ref{fi:prim-ext}.
  \begin{itemize}
    \item $\Ldeux\subset\Ltrois$:
      $(a,a)$ is primitive over $\Ldeux$ with signature
      $(\UN,\{\ZERO\},1)$. 
    \item $\Ldeux\subset\Ldeux\times\Ldeux$:
      $(a,b)$ is a primitive over $\Ldeux$, with signature
      $(\UN,\{\ZERO\},2)$.
    \item $\Lcinq^*\subset\Lneuf$:
      $(c,d)$ is primitive over $\Lcinq^*$ with signature
      $(\UN,\{a,b\},2)$. 
  \end{itemize}
\end{example}

\begin{figure}[ht]
  \begin{center}
    \newcommand{\noir}[1]{\filldraw #1 circle (.1)}
    \newcommand{\blanc}[1]{\draw[fill=white] #1 circle (.1)}

    \begin{tikzpicture}[scale=.5]

      \draw (0,-1) -- (0,0) node[left]{$a$} -- (0,1);
      \noir{(0,0)};
      \blanc{(0,-1)};
      \blanc{(0,1)};
      \draw (0,-3) node{$\Ldeux\subset\Ltrois$};

      \begin{scope}[xshift=5cm]
        \draw (-1,0) node[left]{$a$}-- (0,1) 
          -- (1,0) node[right]{$b$} -- (0,-1) -- cycle;
        \noir{(-1,0)};
        \noir{(1,0)};
        \blanc{(0,-1)};
        \blanc{(0,1)};
        \draw (0,-3) node{$\Ldeux\subset\Ldeux\times\Ldeux$};
      \end{scope}

      \begin{scope}[xshift=12cm]
        \begin{scope}[every node/.style={draw,shape=circle,fill=white,
                 inner sep=0mm, minimum size=1mm}]
          \foreach \k in {0,1,2}
            { \draw (\k,\k-2) node{} -- ++(-1,1) node{} -- ++(-1,1) node{};
              \draw (-\k,\k-2) node{} -- ++(1,1) node{} -- ++(1,1) node{}; }
        \end{scope}
        \begin{scope}[every node/.style={draw,shape=circle,fill=black,
                 inner sep=0mm, minimum size=1mm}]
          \draw (-2,0) node(c){} (-1,1) node{} 
            (1,1) node{} (2,0) node(d){};
        \end{scope}
        \draw (-1,-1) node[below left]{$a$}
          (1,-1) node[below right]{$b$}
          (c) node[below left]{$c$}
          (d) node[below right]{$d$};
        \draw (0,-3) node{$\Lcinq^*\subset\Lneuf$};
      \end{scope}

    \end{tikzpicture}

    \caption{\label{fi:prim-ext} 
      Three examples of primitive extensions $L_0\subset L$. \\
      Inside $L$, the white points represent $L_0$.}
  \end{center}
\end{figure}
Dually, the next figure shows the projections of $\jirr(L)$ onto
$\jirr(L_0)$ corresponding to each of the three embeddings $L_0\subset L$ in
figure~\ref{fi:prim-ext}.

\begin{figure}[ht]
  \begin{center}
    \newcommand{\noir}[1]{\filldraw #1 circle (.1)}
    \begin{tikzpicture}[scale=.5]
      \draw (0,0) node[left]{$a$} -- (0,1.5) node[left]{$\UN$};
      \draw (0,-2) node[below]{$\UN$};
      \draw[->>,color=gray] (0,-.5) -- (0,-1.5);
      \noir{(0,-2)};
      \noir{(0,0)};
      \noir{(0,1.5)};
      \begin{scope}[xshift=5.5cm]
        \draw (0,-2) node[below]{$\UN$};
        \draw (-.7,.75) node[above]{$a$};
        \draw (.7,.75) node[above]{$b$};
        \draw (0,.75) node[below]{$\color{gray}\underbrace{\qquad\quad}$};
        \draw[->>,color=gray] (0,-.2) -- (0,-1.5);
        \noir{(0,-2)};
        \noir{(-.7,.75)};
        \noir{(.7,.75)};
      \end{scope}
      \begin{scope}[xshift=12cm]
        \draw (-.8,-2) node(aa)[below]{$a$};
        \draw (.8,-2) node(bb)[below]{$b$};
        \draw (0,-1.2) node(UN)[below]{$\UN$};
        \draw (-.8,0) node(a)[left]{$a$};
        \draw (.8,0) node(b)[right]{$b$};
        \draw (-.8,1.5) node(c)[left]{$c$};
        \draw (.8,1.5) node(d)[right]{$d$};
        \draw (0,1.5) node[below]{$\color{gray}\underbrace{\qquad\quad}$};
        \draw (-.8,0) -- (-.8,1.5) (.8,0) -- (.8,1.5) (-.8,-2) -- (0,-1.2) -- (.8,-2);
        \draw[->>,color=gray] (-.8,-.5) -- (-.8,-1.5);
        \draw[->>,color=gray] (.8,-.5) -- (.8,-1.5);
        \draw[->>,color=gray] (0,.5) -- (0,-.8);
        \noir{(-.8,-2)};
        \noir{(-.8,0)};
        \noir{(-.8,1.5)};
        \noir{(.8,-2)};
        \noir{(.8,0)};
        \noir{(.8,1.5)};
        \noir{(0,-1.2)};
      \end{scope}
    \end{tikzpicture}
  \end{center}
  \caption{Dual projections.}
\end{figure}

\begin{theorem}\label{th:ext-primitive}
Let $L_0$ be a finite co-Heyting algebra, $L$ an extension generated
over $L_0$ by a primitive tuple $(x_1,x_2)$, and let $g=g(x_1,L_0)$.

Then $L$ is exactly the upper semi-lattice generated over $L_0$ by
$x_1,x_2$. It is a finite co-Heyting algebra and one of the
following holds: 
\begin{enumerate}
\item
\label{item:ext prim 1 gene}
$x_1=x_2$ and $\jirr(L)=\jirr(L_0)\cup\{x_1\}$.
\item
\label{item:ext prim 2 gene}%
$x_1\neq x_2$ and $\jirr(L)=\left(\jirr(L_0)\setminus\{g\}\right)\cup\{x_1,x_2\}$.
\end{enumerate}
\end{theorem}

\begin{proof}
Let $L_1$ be the upper semi-lattice generated over $L_0$ 
by $x_1,x_2$. In order to prove that $L_1=L$ it is sufficient 
to show that $L_1$ is an $\ltc$\--substructure of $L$. 

We first check that $L_1$ is a sublattice of $L$. 
Any two elements in $L_1$ can be written $a \join y$ and 
$a' \join y'$ with $a, a'$ in $L_0$ and $y,y'$ in $\{\ZERO,x_1,x_2\}$, so
\begin{displaymath}
  (a\join y)\meet(a'\join y')=(a\meet a')\join(a\meet y')\join(a'\meet y)\join(y\meet y').
\end{displaymath}
It suffices to show that each of the four elements joined in the
right-hand side belong to $L_1$. That $a\meet a'\in L_0$ is clear. 
If $a\geq y'$ then obviously $a\meet y'=y'\in L_1$. Otherwise, $a\ngeq y'$ implies
that $y'=x_i$ for some $i\in\{1,2\}$ and that $a\ngeq g$, hence $a\meet
g=a\meet g^-$. Because $g^-\meet x_i\in L_0$ by definition of a primitive tuple, we have
\begin{displaymath}
  a\meet x_i=a\meet(g\meet x_i)= (a\meet g^-)\meet x_i=a\meet(g^-\meet x_i)\in L_0.
\end{displaymath}
This proves that $a\meet y'\in L_1$, and by symmetry $a'\meet y\in L_1$. Finally,
if $y\leq y'$ or $y'\leq y$ then obviously $y\meet y'\in L_1$. Otherwise we have
$\{y,y'\}=\{x_1,x_2\}$ and $x_1\neq x_2$, hence $x_1\meet x_2\in L_0$ by
definition of a primitive tuple hence $y\meet y'\in L_1$ in this case too.
\smallskip

So $L$ is a sublattice of $L$. We turn now to the $-$ operation. 
\begin{eqnarray*}
  (a \join y) - (a' \join y') 
  &=& \left[a-(a' \join y')\right] \join \left[ y-(a'\join y') \right] \\
  &=& \left[ (a-a')-y' \right] \join \left[ (y-y')-a' \right]
\end{eqnarray*}
So it suffices to prove that $(a-a')-y'$ and $(y-y')-a'$ belong to
$L_1$. For the first one, we have $a-a'\in L_0$ hence it suffices to
check that $b-x_1\in L_1$ for every $b\in L_0$ (that $b-x_2\in L_1$ will
follow by symmetry). For the second one, note that $y-y'$ is either
$\ZERO, x_1$ or $x_2$. This is obvious if $y=y'$ or $y=\ZERO$ or
$y'=\ZERO$, and otherwise we have $\{y,y'\}=\{x_1,x_2\}$ and $x_1\neq x_2$.
But then, by definition of a primitive tuple
\begin{displaymath}
  x_1-x_2=(x_1\join x_2)-x_2=g-x_2=x_1
\end{displaymath}
and symmetrically $x_2-x_1=x_2$. Thus, in order to prove that 
$(y-y')-a'\in L_1$ it suffices to check that $x_1-b\in L_1$ for every
$b\in L_0$ (that $x_2-b\in L_1$ will follow by symmetry).

Let $b$ be any element of $L_0$. We first check that $b-x_1 \in L_1$. 
\begin{eqnarray*}
  b-x_1 &=& \left[(b \meet g) \join (b-g)\right]-x_1\\
        &=& \left[(b \meet g)-x_1\right] \join \left[(b-g)-x_1\right] \in L_1
\end{eqnarray*}
If $g \leq b$ then $(b \meet g) - x_1=g-x_1$ is either $g$ or $x_2$. If 
$g \nleq b$ then $b \meet g \leq g^-$ hence $b \meet g \meet x_1$ belongs to $L_0$ hence
so does $(b \meet g) - x_1=(b \meet g)-(b\meet g\meet x_1)$. So in any case $(b \meet g)
-x_1$ belongs to $L_1$. On the other hand $(b-g)-x_1=b-g$ belongs to
$L_0$, so finally $b-x_1\in L_1$. 

Now we check that $x_1 -b \in L_1$. This is clear if $x_1 \leq b$.
If $x_1 \nleq b$ then $g \nleq b$ hence $x_1 \meet b \leq g^-$. It follows that:
\begin{displaymath}
   x_1 \meet b \leq g^-\meet x_1 \ll x_1
\end{displaymath}
Indeed if $x_1=x_2$ then $g^-\meet x_1 \ll x_1$ by assumption, and if $x_1\neq
x_2$ then $g^-\meet x_1 < g$ and $g^-\meet x_1\in L_0$ hence $g^-\meet x_1 \ll g$
because $g$ is join irreducible. So $x_1-b=x_1-(x_1\meet b)=x_1$ belongs to $L_1$.
\smallskip

This proves that $L_1$ is an $\ltc$\--substructure of $L$. Since $L_1$
contains $L_0$ and $\{x_1,x_2\}$ it follows that $L_1=L$. 
\smallskip

We turn now to the description of $\jirr(L)$. Since $L_0$ is finite
and $L$ is generated by $L_0\cup\{x_1,x_2\}$ as an upper semi-lattice, it
follows immediately that $L$ is finite and:
\begin{equation}
\label{eq:irred L et L0: inclusion}
\jirr(L)\subseteq \jirr(L_0)\cup\{x_1,x_2\}
\end{equation}
If $x_1\neq x_2$ then of course $g=x_1\join x_2\notin\jirr(L)$. 
Conversely if $x_1=x_2$ then $x_1\ll g$ by definition of a primitive
tuple, so $g-x_1=g$. Then for any $a\in L_0$ we have $g-(x_1\cup a)=g-a$ is
equal to $\ZERO$ or $g$ by join irreducibility of $g$ in $L_0$, which
proves that $g\in\jirr(L)$. So:
\begin{equation}
\label{eq:irred L et L0: g}
g\in\jirr(L) \iff x_1= x_2
\end{equation}

Of course we cannot have $\jirr(L)=\jirr(L_0)$ since $L$ is a proper
extension of $L_0$ generated by $\jirr(L)$. So by (\ref{eq:irred L et
L0: inclusion}) and (\ref{eq:irred L et L0: g}) it only remains to
check that 
\begin{equation}\label{eq:irred L et L0: deuxieme inclusion}
\jirr(L_0)\setminus\{g\}\subseteq \jirr(L)
\end{equation}
Assume that $\jirr(L_0)\not\subseteq \jirr(L)$. 
Let $b\in\jirr(L_0)\setminus\jirr(L)$ 
and let $y_1,\dots,y_r$ ($r\geq 2$) be its $\join$\--irreducible 
components in $L$. By (\ref{eq:irred L et L0: inclusion}), 
each $y_i$ either belongs to $L_0$ or to $\{x_1,x_2\}$, 
and at least one of them does not belong to $L_0$. 
We may assume without loss of generality that $y_1=x_1$. 
Then $x_1\leq b$ hence $g\leq b$. If $g<b$ then $g\ll b$ 
since $b\in\jirr(L_0)$ so $b-g=b$, but then we have a contradiction: 
\begin{displaymath}
  y_1\leq b-g\leq b-x_1=\jjoin_{i=2}^r y_i 
\end{displaymath}
This proves that either $\jirr(L_0)\subseteq \jirr(L)$ or
$\jirr(L_0)\setminus\jirr(L)=\{g\}$, hence 
(\ref{eq:irred L et L0: deuxieme inclusion}) holds true in any case. 
\end{proof}

\begin{corollary}\label{co:mini-prim}
  Let $L$ be a finite extension of a co-Heyting algebra $L_0$. The
  following assertions are equivalent. 
  \begin{enumerate}
    \item\label{it:min}
      $L$ is a minimal proper extension of $L_0$.
    \item\label{it:prim}
      $L$ is a primitive extension of $L_0$.
    \item\label{it:card}
      $\card(\jirr(L))=\card(\jirr(L_0))+1$.
  \end{enumerate}
  As a consequence every finite extension $L_0\subset L$ is the
  union of a tower of primitive extensions $L_0\subset L_1\subset\cdots\subset L_n=L$
  with $n=\card(\jirr(L))-\card(\jirr(L_0))$. 
\end{corollary}

\begin{proof}
(\ref{it:prim})$\Rightarrow$(\ref{it:card}): This follows directly from
theorem~\ref{th:ext-primitive}.
\smallskip

(\ref{it:card})$\Rightarrow$(\ref{it:min}): Let $L_1$ be a proper extension of $L_0$
contained in $L$. We have to prove that $L=L_1$. The inclusion maps
$\varphi_0:L_0\to L_1$ and $\varphi_1:L_1\to L$ induce surjective
increasing maps $[\varphi_1]:\jirr(L)\to\jirr(L_1)$ and
$[\varphi_0]:\jirr(L)\to\jirr(L_0)$. This implies that
$\card(\jirr(L))\geq\card(\jirr(L_1))\geq\card(\jirr(L_0))$. The second
inequality is strict, otherwise $[\varphi_0]$ is a bijection hence so is
$\varphi_0$, contrary to our assumption that $L_1$ is a proper extension of
$L_0$. But (\ref{it:prim}) then implies that
\begin{displaymath}
  \card(\jirr(L))=\card(\jirr(L_1))=\card(\jirr(L_0))+1 
\end{displaymath}
Thus $[\varphi_1]$ is a bijection, hence so is $\varphi_1$, that is $L=L_1$. 
\smallskip

(\ref{it:min})$\Rightarrow$(\ref{it:prim}): By minimality of $L$ it suffices to
prove that $L$ contains a primitive extension of $L_0$, that is to
find in $L$ a primitive tuple $(x_1,x_2)$ over $L_0$. In order to do
so, let us take any element $x$ minimal in $\jirr(L)\setminus L_0$. Observe that
if $y$ is any element of $L$ strictly smaller than $x$, then all the
$\join$\--irreducible components of $y$ in $L$ actually belong to $L_0$
(by minimality of $x$) so $y\in L_0$. 

Let $g=g(x,L_0)$. For every $a\in L_0$, if $a<g$ then $x \nleq a$ hence
$a\meet x<x$, so $a\meet x\in L_0$. It follows that $g^-\meet x\in L_0$. In particular
$g^- \neq g$ hence $g\in\jirr(L_0)$. 

Moreover $g^- \meet x < x$ since $x \notin L_0$, hence $g^- \meet x \ll x$ because
$x$ is join irreducible in $L$. So in the case when $x \ll g$ we have 
proved that $(x,x)$ is primitive over $L_0$. 

On the other hand, when
$x \not\ll g$ then $g-(g-x)=x$, indeed:
\begin{displaymath}
  g-(g-x)=\big(x\join(g-x)\big)-(g-x)=x-(g-x) 
\end{displaymath}
The last term is either $\ZERO$ or $x$ due to the join irreducibility
of $x$. But it cannot be $\ZERO$ since $x \leq g-x$ would imply that
$g=g-x$ hence $x \ll g$, a contradiction. 

Note that $x\nleq g-x$ implies also that $x\meet(g-x)<x$ hence $x\meet(g-x)\in L_0$.
So when $x \not\ll g$ we have proved that $(x,g-x)$ is primitive over
$L_0$. 
\end{proof}

\begin{corollary}\label{co:isom-sign}
  Let $L_1$, $L_2$ be two finite co-Heyting algebras both generated over a
  common subalgebra by a primitive tuple. If these tuples have the same
  signature in $L_0$ then they are isomorphic over $L_0$ (there exists
  an $\ltc$\--isomorphism from $L_1$ to $L_2$ which fixes $L_0$
  pointwise).
\end{corollary}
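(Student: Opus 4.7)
My plan is to exploit the duality between a finite co-Heyting algebra and its poset of join-irreducibles. Via the canonical isomorphism $\iota_{L_i}:a\mapsto a^\downarrow\cap\jirr(L_i)$, the problem reduces to constructing a poset isomorphism $\sigma:\jirr(L_1)\to\jirr(L_2)$ compatible with the inclusions of $L_0$, and then transporting it back through $\iota_{L_1}$ and $\iota_{L_2}$.

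The first step is to read off $\jirr(L_i)$ as a poset entirely from the common signature $(g,H,r)$. Proposition~\ref{pr:ext-primitive} gives the underlying set: in case $r=1$ it is $\jirr(L_0)\cup\{x^{(i)}\}$, and in case $r=2$ it is $\bigl(\jirr(L_0)\setminus\{g\}\bigr)\cup\{x_1^{(i)},x_2^{(i)}\}$. I claim that the comparabilities involving the new generators are determined by $(g,H,r)$. For $a\in\jirr(L_0)$ one has $x_k^{(i)}\leq a$ iff $g\leq a$ in $L_0$: from $x_k^{(i)}\leq a\in L_0$ the definition of $g(x_k^{(i)},L_0)$ yields $g(x_k^{(i)},L_0)\leq a$, and $g(x_k^{(i)},L_0)=g$ by lemma~\ref{le:g-prim}; conversely $g\leq a$ gives $x_k^{(i)}\leq g\leq a$. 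For $y\in\jirr(L_0)\setminus\{g\}$ one has $y\leq x_k^{(i)}$ iff $y\leq h_k$ in $L_0$, where $h_k=g^-\meet x_k^{(i)}\in H$: indeed $y\leq x_k^{(i)}\leq g$ combined with $y\neq g$ forces $y\leq g^-$, hence $y\leq g^-\meet x_k^{(i)}=h_k$, and the converse is immediate. Finally, in case $r=2$ the new generators $x_1^{(i)}$ and $x_2^{(i)}$ are incomparable: were $x_2^{(i)}\leq x_1^{(i)}$, then $g\leq x_1^{(i)}\join x_2^{(i)}=x_1^{(i)}\leq g$ would force $x_1^{(i)}=g\in L_0$, absurd.

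Relabelling $(x_1^{(2)},x_2^{(2)})$ if necessary so that $g^-\meet x_k^{(1)}=g^-\meet x_k^{(2)}$ for each $k$, define $\sigma:\jirr(L_1)\to\jirr(L_2)$ to be the identity on the shared part and to send $x_k^{(1)}\mapsto x_k^{(2)}$. The description of the order given above makes $\sigma$ a poset isomorphism. Consequently the direct-image map $A\mapsto\sigma(A)$ is a lattice isomorphism $\dec(\jirr(L_1))\to\dec(\jirr(L_2))$ which moreover preserves $-$ because both $\sigma$ and $\sigma^{-1}$ preserve $\leq$, hence downward closure. Conjugating by $\iota_{L_1}$ and $\iota_{L_2}$ yields an $\ltc$-isomorphism $\Phi:L_1\to L_2$.

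It remains to show that $\Phi$ fixes $L_0$ pointwise. For $a\in L_0$, $\iota_{L_1}(a)=a^\downarrow\cap\jirr(L_1)$, and by the analysis of step one $\sigma$ sends this set to $a^\downarrow\cap\jirr(L_2)=\iota_{L_2}(a)$: the shared elements in $a^\downarrow$ are preserved by definition, while $x_k^{(1)}\in a^\downarrow$ iff $g\leq a$ iff $x_k^{(2)}\in a^\downarrow$. Hence $\Phi(a)=a$, as required. The only genuinely delicate part of the argument is the poset analysis in step one; once it is in hand, the remainder is categorical bookkeeping.
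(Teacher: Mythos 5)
Your proof is correct and follows essentially the same route as the paper's: after relabelling so that $g^-\meet x_k^{(1)}=g^-\meet x_k^{(2)}$, both arguments show that the order on $\jirr(L_i)$ is determined by the signature together with $\jirr(L_0)$, and then extend the resulting poset isomorphism to an $\ltc$-isomorphism (the paper via joins of join-irreducible components, you via conjugation by $\iota_{L_i}$, which amounts to the same thing). The only point worth adding for completeness is the trivial observation that in the case $r=1$ one also has $g\not\leq x^{(i)}$, since otherwise $x^{(i)}=g\in L_0$.
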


\begin{proof}
Assume that $L_i$ is generated over $L_0$ by a primitive tuple
$(x_{i,1},x_{i,2})$ for $i=1,2$ having the same signature $(g,H,r)$ in
$L_0$. By definition of $H$, changing if necessary the numbering of
the $x_{2,j}$'s we can assume that for $i=1,2$:
\begin{equation}\label{eq:g-moins}%
  g^-\meet x_{1,j} = g^- \meet x_{2,j}
\end{equation}
By definition of $r$, $x_{1,1}\neq x_{1,2}$ if and only if $x_{2,1}\neq
x_{2,2}$. By definition of $g$ and by
theorem~\ref{th:ext-primitive} there exists a (unique) bijection
$\sigma$ from $\jirr(L_1)$ to $\jirr(L_2)$ which fixes $\jirr(L_0)$
pointwise and maps each $x_{1,j}$ to $x_{2,j}$.
Condition~(\ref{eq:g-moins}) ensures that $\sigma$ preserves the order. 
This determines an
$\ltc$\--isomorphism $[\sigma]:\dec(\jirr(L_2))\to\dec(\jirr(L_1))$ which
fixes $\dec(\jirr(L_0))$ pointwise. Recall that for every co-Heyting
algebra $L$ we let $\iota_L:L\to\dec(\jirr(L))$ denote the canonical
isomorphism defined by $\iota(a)=a^\downarrow\cap\jirr(L)$. We finally get that
$\iota_{L_1}^{-1}\circ[\sigma]\circ\iota_{L_2}^{\phantom{-1}}$ is an isomorphism from $L_2$
to $L_1$ over $L_0$. 
\end{proof}

\begin{remark}\label{re:corr-sign-ext}
  Corollary~\ref{co:mini-prim} shows that every minimal finite proper
  extension of $L_0$ is generated by a primitive tuple $(x_1,x_2)$,
  which is unique (up to permutation) by
  theorem~\ref{th:ext-primitive}. This tuple has a signature, which is
  then entirely determined by the embedding of $L_0$ into $L_1$. So we
  will call it the {\df signature of $L_1$ in $L_0$}. Of course every
  other extension of $L_0$ isomorphic to $L_1$ over $L_0$ will have
  the same signature in $L_0$. Conversely,
  corollary~\ref{co:isom-sign} shows that this signature determines
  $L_1$ up to isomorphism over $L_0$. Finally, it will be shown in the
  next section~\ref{se:V1} that every signature in $L_0$ is the
  signature of a primitive extension of $L_0$ (see
  remark~\ref{re:exist-sign}). Altogether this proves that the minimal
  finite proper extensions of $L_0$ up to isomorphism over $L_0$ are
  in one-to-one correspondence with the signatures in $L_0$. 
\end{remark}

\section{Density and splitting in $\cV_1$}%
\label{se:V1}%

For the variety $\cV_1$ of all co-Heyting algebras we introduce the
following axioms D1 and S1.

\begin{description}
  \item{\bf [Density D1]} 
    For every $a,c$ such that $c \ll a \neq \ZERO$ there exists a non zero
    element $b$ such that:
     \begin{displaymath}
       c \ll b \ll a 
     \end{displaymath}
  \item{\bf [Splitting S1]} 
    For every $a,b_1,b_2$ such that $b_1 \join b_2 \ll a \neq \ZERO$ there exists non
    zero elements $a_1$ and $a_2$ such that:
    \begin{displaymath}
      \begin{array}{c} 
        a-a_2 = a_1 \geq b_1\\
        a-a_1 = a_2 \geq b_2\\
        a_1 \meet a_2 = b_1 \meet b_2 
      \end{array}
    \end{displaymath}
\end{description}

Note that $a = a_1 \join a_2$, so the second axioms allows to split $a$ in
two pieces $a_1$, $a_2$ along $b_1\meet b_2$ (so the name of
``splitting'').

\begin{lemma}\label{le:D1}
  Let $a,c$ be two elements of a finite co-Heyting algebra $L$.
  If $c \ll a \neq \ZERO$ then there exists a finite co-Heyting algebra $L'$
  containing $L$ and a non zero element $b$ in $L'$ such that: 
  \begin{displaymath}
    c \ll b \ll a 
  \end{displaymath} 
\end{lemma}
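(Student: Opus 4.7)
I would build $L'$ by applying Proposition~\ref{pr:finite-injection} to a suitably chosen poset $\cI$ refining $\jirr(L)$. The geometric picture is clear: we want to insert, just below each ``top'' join-irreducible of $a$, a new irreducible point that sits above all join-irreducibles of $c$. These new points (collectively) will form the element $b$.

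\medskip\noindent\textbf{Setup.} Let $M\subseteq\jirr(L)$ be the set of maximal join-irreducibles $\leq a$. First observe: since $c\ll a$, every $g\in M$ satisfies $g\not\leq c$. Indeed $a-c=a$ forces, via $\iota_L$, every join-irreducible $y\leq a$ to have an upper bound $y'\leq a$ with $y'\not\leq c$; applied to a maximal $g$ this forces $g$ itself to be $\not\leq c$.

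\medskip\noindent\textbf{Construction of $\cI$.} Put $\cI := \jirr(L) \sqcup \{\beta_g : g\in M\}$. Extend the order of $\jirr(L)$ by declaring, for each $g\in M$ and $x\in\jirr(L)$:
\begin{displaymath}
  \beta_g\leq x \iff g\leq x, \qquad x\leq\beta_g \iff x\leq g\meet c,
\end{displaymath}
and $\beta_g,\beta_{g'}$ incomparable for $g\neq g'$. To check transitivity the only delicate case is $\beta_g\leq x\leq\beta_{g'}$, which would give $g\leq x\leq c$, contradicting $g\not\leq c$. Define $\pi:\cI\to\jirr(L)$ to be the identity on $\jirr(L)$ and $\pi(\beta_g)=g$. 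Then $\pi$ is an increasing surjection; for the lifting property of Proposition~\ref{pr:finite-injection}, if $\pi(\zeta)\leq x$ take $\xi=x$: for $\zeta\in\jirr(L)$ this is tautological, and for $\zeta=\beta_g$ we have $g=\pi(\beta_g)\leq x$ hence $\beta_g\leq x$ by definition.

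\medskip\noindent\textbf{The element $b$.} Identify $L$ with its image in $L':=\dec(\cI)$ via the embedding $\varphi$ given by the proposition, so that $\varphi(d)=\{\zeta\in\cI:\pi(\zeta)\leq d\}$ for every $d\in L$. Define
\begin{displaymath}
  b := \varphi(c)\cup\{\beta_g:g\in M\}\ \in\ L'.
\end{displaymath}
This is a decreasing subset of $\cI$ since each $\beta_g$'s predecessors in $\cI$ are contained in $\{y\in\jirr(L):y\leq g\meet c\}\subseteq\varphi(c)$. Clearly $b\neq\ZERO$ (as $a\neq\ZERO$ implies $M\neq\emptyset$) and $b\geq\varphi(c)$.

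\medskip\noindent\textbf{Verifications.} For $b\ll\varphi(a)$: compute $\varphi(a)\setminus b=\{y\in\jirr(L):y\leq a,\,y\not\leq c\}$ (every $\beta_g$ belongs to $b$). Its downward closure in $\cI$ contains every $y\leq a$ (by $c\ll a$ in $L$) and also every $\beta_g$ (dominated by $g$ itself, which lies in $\varphi(a)\setminus b$). Hence $\varphi(a)-b=\varphi(a)$. For $\varphi(c)\ll b$: $b\setminus\varphi(c)=\{\beta_g:g\in M\}$ and its downward closure in $\cI$ is $\{\beta_g\}_{g\in M}\cup\{y\in\jirr(L):y\leq g\meet c\text{ for some }g\in M\}$; each $y\leq c$ lies below a maximal join-irreducible $g\in M$ of $a$ (since $y\leq c\leq a$), so the second set equals $\varphi(c)$ and thus $b-\varphi(c)=b$.

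\medskip\noindent\textbf{Main obstacle.} The crux is realising that one single new point does not suffice: the join-irreducible components of $c$ need not lie below any single maximal $g\leq a$. The construction sidesteps this by inserting one $\beta_g$ per maximal $g\in M$, letting each $\beta_g$ absorb precisely the part $g\meet c$ of $c$ that lies beneath it; this is what makes both equalities $a-b=a$ and $b-c=b$ work simultaneously, and what ensures the partial order on $\cI$ remains consistent.
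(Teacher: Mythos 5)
Your proof is correct and follows essentially the same route as the paper: both insert one new join-irreducible point immediately below each join-irreducible component of $a$ and invoke Proposition~\ref{pr:finite-injection}, taking $b$ to be the join of the new points (together with $c$). The only immaterial difference is that the paper places every join-irreducible strictly below the component $a_i$ underneath the new point, whereas you place only those below $a_i\meet c$; both choices make the verifications $c\ll b\ll a$ go through.
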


\begin{proof}
Let $a_1,\ldots,a_r$ be the join irreducible components of $a$.
The idea of the proof is to add a new $\join$-irreducible element $\alpha_i$
immediately below each $a_i$.
Let $\mathcal{I}$ be the set $\jirr(L)$ augmented by $r$ new elements
$\alpha_1,\ldots,\alpha_r$. Extend the order of $\jirr(L)$ to $\mathcal{I}$ as
follows. The $\alpha_i$'s are two by two incomparable, and for every
$x\in\jirr(L)$ and every $i \leq r$:
\begin{displaymath}
  x < \alpha_i \quad\Leftrightarrow\quad x < a_i
\end{displaymath}
\begin{displaymath}
  \alpha_i < x \quad\Leftrightarrow\quad a_i \leq x 
\end{displaymath}
For every $\xi \in \cI$ let:
\begin{displaymath}
  \pi(\xi)=\left\{
  \begin{array}{ll} 
    x   & \mbox{if }\xi=x\mbox{ for some }x \in L \\
    a_i & \mbox{if }\xi=\alpha_i\mbox{ for some }i \leq r
  \end{array}\right.
\end{displaymath}
This is an increasing projection of $\mathcal{I}$ onto $\jirr(L)$. For
every $\zeta \in \mathcal{I}$ and every $x \in \jirr(L)$ such that $\pi(\zeta)\leq x$
there exists $\xi \in \mathcal{I}$ such that $\pi(\xi)=x$ and $\zeta \leq
\xi$: simply take $\xi=x$. Thus proposition~\ref{pr:finite-injection}
gives an $\ltc$\--embedding $\varphi$ of $L$ into $\dec(\cI)$. 

Each join irreducible element $x$ of $L$ smaller than $c$ is strictly smaller
than some join irreducible component $a_i$ of $a$ because $c \ll a$. 
By construction $x < \alpha_i< a_i$ in $\cI$ hence $\varphi(x)< \alpha_i^\downarrow < \varphi(a_i)$.
These three elements of $L'$ are join irreducible hence $\varphi(x) \ll \alpha_i^\downarrow \ll \varphi(a_i)$. It
follows that: 
\begin{displaymath}
  \varphi(c) = \jjoin \{ \varphi(x)\tq x\in\jirr(L),\ x\leq c \}
       \ll \jjoin_{1\leq i\leq r} \alpha_i^\downarrow
       \ll \jjoin_{1\leq i\leq r} \varphi(a_i) = \varphi(a)
\end{displaymath}
So we can take $L'=\dec(\cI)$ and $b=\jjoin_{1\leq i\leq r} \alpha_i^\downarrow$.
\end{proof}

\begin{lemma}\label{le:S1}
  Let $a,b_1,b_2$ be elements of a finite co-Heyting algebra $L$.
  If $b_1 \join b_2 \ll a \neq \ZERO$ then there exists a finite co-Heyting
  algebra $L'$ containing $L$ and non zero elements $a_1$, $a_2$ in
  $L'$ such that:
  \begin{displaymath}
    \begin{array}{c}  
      a-a_2 = a_1 \geq b_1\\
      a-a_1 = a_2 \geq b_2\\
      a_1 \meet a_2 = b_1 \meet b_2 
    \end{array}
  \end{displaymath}
\end{lemma}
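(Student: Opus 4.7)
The plan is to apply proposition~\ref{pr:finite-injection} with a carefully designed poset $\cI$, in the same spirit as the proof of lemma~\ref{le:D1}: each join irreducible component of $a$ will be ``doubled'' into two incomparable new elements, whose joins will form $a_1$ and $a_2$ respectively in the extension $L'=\dec(\cI)$. Geometrically one cuts each component of $a$ in two along the common boundary of $b_1$ and $b_2$.

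Let $g_1,\ldots,g_r$ be the join irreducible components of $a$ (with $r\geq 1$ since $a\neq\ZERO$), set $G=\{g_1,\ldots,g_r\}$ and $\cI=(\jirr(L)\setminus G)\cup\{\alpha_1,\beta_1,\ldots,\alpha_r,\beta_r\}$ where the $2r$ Greek letters denote fresh pairwise distinct symbols. Define $\pi:\cI\to\jirr(L)$ by $\pi(y)=y$ for $y\in\jirr(L)\setminus G$ and $\pi(\alpha_i)=\pi(\beta_i)=g_i$. Order $\cI$ so that the restriction to $\jirr(L)\setminus G$ coincides with the order of $\jirr(L)$; the elements $\alpha_1,\beta_1,\ldots,\alpha_r,\beta_r$ are pairwise incomparable in $\cI$; for every $y\in\jirr(L)\setminus G$ and every $i\leq r$, $\alpha_i<y$ (resp.\ $\beta_i<y$) in $\cI$ iff $g_i<y$ in $L$; and
\begin{displaymath}
y<\alpha_i \iff y<g_i \mbox{ in } L \mbox{ and } y\leq b_1,
\end{displaymath}
\begin{displaymath}
y<\beta_i \iff y<g_i \mbox{ in } L \mbox{ and } (y\leq b_2 \mbox{ or } y\not\leq b_1\join b_2).
\end{displaymath}
After checking that $\pi$ is increasing and satisfies the lifting hypothesis of proposition~\ref{pr:finite-injection}, that proposition yields an $\ltc$\--embedding $\varphi:L\hookrightarrow L'=\dec(\cI)$. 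Identifying $L$ with its image, set $a_1=\jjoin_{i\leq r}\alpha_i^\downarrow$ and $a_2=\jjoin_{i\leq r}\beta_i^\downarrow$ in $L'$; both are non zero since $r\geq 1$.

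To verify the required identities, classify the join irreducibles $y\in\jirr(L)\setminus G$ with $y\leq a$ (equivalently $y<g_i$ for some $i$) according to whether $y\leq b_1$ and whether $y\leq b_2$. The crucial point is $a_1\meet a_2=b_1\meet b_2$: using the primeness of join irreducibles in a distributive lattice, an element $y$ lies in $a_1\meet a_2$ only if $y\leq b_1$ and $y\leq b_2$, hence only if $y\leq b_1\meet b_2$; the reverse inclusion is immediate. The inclusions $b_i\leq a_i$ follow directly from the ordering rules, and the equality $a-a_2=a_1$ reduces to identifying the down-closure of $\varphi(a)\setminus a_2$ with $\jjoin_{i\leq r}\alpha_i^\downarrow$.

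The main obstacle is the treatment of join irreducibles $y<g_i$ that lie below neither $b_1$ nor $b_2$: the hypothesis $b_1\join b_2\ll a$ does not preclude such elements, yet the lifting condition of proposition~\ref{pr:finite-injection} forces each such $y$ to sit below at least one of $\alpha_i,\beta_i$ in $\cI$. Placing them below both would violate $a_1\meet a_2=b_1\meet b_2$, so the rule has to break symmetry by systematically assigning them to the $\beta_i$ side. This asymmetry is why the clauses defining $y<\alpha_i$ and $y<\beta_i$ are not mirror images of each other, but it is harmless for the subsequent verification.
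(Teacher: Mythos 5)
There is a genuine gap, and it sits precisely at the point you dismiss as ``harmless''. Your construction only doubles the maximal join irreducible components $g_1,\dots,g_r$ of $a$ and keeps a single copy of every other join irreducible of $L$. Consider the four-element chain $L$ with $\jirr(L)=\{y,z,\UN\}$, $y<z<\UN$, and take $a=\UN$, $b_1=y$, $b_2=\ZERO$ (so $b_1\join b_2=y\ll\UN=a$ and $b_1\meet b_2=\ZERO$). Here $G=\{\UN\}$ and $\cI=\{y,z,\alpha_1,\beta_1\}$ with, by your rules, $y<z$, $y<\alpha_1$ (because $y\leq b_1$), $z<\beta_1$ (because $z\nleq b_1\join b_2$) and $y\not<\beta_1$ (because $y\leq b_1\join b_2$ and $y\nleq b_2$). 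This relation is not transitive, so $\cI$ is not an ordered set and proposition~\ref{pr:finite-injection} does not apply; moreover $\beta_1^\downarrow=\{z,\beta_1\}$ is not a decreasing subset, so $a_2\notin\dec(\cI)$. If you repair this by passing to the transitive closure, then $y\leq z\leq\beta_1$ forces $y\in a_1\cap a_2$, while $\varphi(b_1\meet b_2)=\emptyset$: the key identity $a_1\meet a_2=b_1\meet b_2$ fails. Your appeal to primeness (``$y$ lies in $a_1\meet a_2$ only if $y\leq b_1$ and $y\leq b_2$'') is exactly what breaks down: membership of $y$ in $a_2=\jjoin_{i}\beta_i^\downarrow$ does not imply $y\leq b_2$, because $\beta_i^\downarrow$ also contains every element lying below some $z\nleq b_1\join b_2$.

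The missing idea is that the cut must go all the way down, not only through the top components of $a$. Every join irreducible $x$ with $x\nleq b_1$ and $x\nleq b_2$ must itself be split into two incomparable copies, one feeding the $a_1$-side and one the $a_2$-side, and a join irreducible below $b_1$ but not below $b_2$ must receive a single copy that is comparable only with copies of the first kind. This is what the paper's proof does: it introduces $\xi_{x,1}$ for every $x\in\jirr(L)$ with $x\nleq b_2$, $\xi_{x,2}$ for every $x$ with $x\nleq b_1$, and a single $\xi_{x,0}$ only when $x\leq b_1\meet b_2$, ordered by $\xi_{y,j}\leq\xi_{x,i}$ iff $y\leq x$ and $\{i,j\}\neq\{1,2\}$; then $a_1\cap a_2$ can only contain symbols $\xi_{x,0}$, i.e.\ elements below $b_1\meet b_2$, and transitivity holds because the $1$-side and the $2$-side never mix. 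Geometrically, the whole ambient space is glued from two copies along $B_1\cap B_2$, not only the set $A$. Your overall strategy (proposition~\ref{pr:finite-injection} applied to a doubled index poset) is the right one, but the poset has to be the paper's, or something equivalent to it.
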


The idea of the proof uses geometric intuition. Imagine that there
exists an $\ltc$\--embedding $\varphi$ of $L$ into the co-Heyting algebra
$L(X)$ of all semi-algebraic closed subsets of some real
semi-algebraic set $X$. It can be proved actually that such an
embedding exists, and that moreover we can reduce to the case when
$\varphi(a)$ is equidimensional (that is its local dimension is the same at
every point). So $A = \varphi(a)$, $B_1 = \varphi(b_1)$ and $B_2 = \varphi(b_2)$ are
closed semi-algebraic subsets of $X$. Let $X_1=X \setminus (B_2 \setminus B_1)$ and
$X_2=X \setminus (B_1 \setminus B_2)$. Glue two copies $X'_1$, $X'_2$ of $X_1$ and
$X_2$ along $B_1 \cap B_2$. The result $X'$ of this glueing is a real
semi-algebraic set which projects onto $X$ in an obvious way.
Figure~\ref{fi:glueing} shows this construction when $A=X$. 

\begin{figure}[ht]
  \begin{center}

    \begin{tikzpicture}[scale=.7]

      \fill[ball color=white]
        (-.8,0) .. controls +(-2,0) and +(0,-1) .. (-4,3)
        .. controls +(0,-1.2) and +(0,-1.2) .. (4,3)
        .. controls +(0,-1) and +(2,0) .. (.8,0)
        -- cycle;
      \filldraw[top color=gray!50,bottom color=white,middle color=gray!15]
        (-4,3) .. controls +(0,-1.2) and +(0,-1.2) .. (4,3)
        .. controls +(0,1.2) and +(0,1.2) .. (-4,3);

        \fill[ball color=gray!40]
        (-.8,0) .. controls +(-1,0) and +(0,1) .. (-4,-2)
        .. controls +(0,-1.2) and +(0,-1.2) .. (4,-2)
        .. controls +(0,1) and +(1,0) .. (.8,0)
        -- cycle;
        \draw (-4,-2) .. controls +(0,-1.2) and +(0,-1.2) .. (4,-2);

      \draw (-2.7,-2) .. controls +(.2,.8) and +(-.9,-.3) .. (-.9,-.01)
        -- (.9,-.01);
      \draw[densely dashed] (-.9,.01) -- (.9,.01)
        .. controls +(1.2,.4) and +(-.2,-.8) .. (2.7,1.7);
      \draw[double,very thin] 
        (2.7,-2) .. controls +(-.2,.8) and +(.9,-.3) .. (.9,0)
        (-.9,0) .. controls +(-1.2,.4) and +(.2,-.8) .. (-2.7,1.7);

      \begin{scope}[yshift=-6cm]
        \filldraw[fill=gray!40]
          (-4,0) .. controls +(0,-1.2) and +(0,-1.2) .. (4,0)
          .. controls +(0,1.2) and +(0,1.2) .. (-4,0);
        \draw (-2.7,-.4) -- (-.9,.09) -- (.9,.09);
        \draw[densely dashed] (-.9,.11) -- (.9,.11) -- (2.7,-.4);
      \end{scope}

      \draw[->>] (0,-3.4) -- (0,-4.6);
      \draw (4,-6) node[right]{$A$}
        (4,-2) node[right]{$A'_1$}
        (4,3) node[right]{$A'_2$}
        (-4,3) node[left]{$\phantom{A'_2}$};
      
      {\small
        \draw (-2.3,-5.9) node{$B_1$}
          (2.3,-5.9) node{$B_2$}
          (-2.1,-1.5) node{$B'_1$}
          (2,1.2) node{$B'_2$};
      }

    \end{tikzpicture}
    \caption{\label{fi:glueing}%
      An example of glueing when $A=X$. \\
      The white curves represent cuts in $A'_1$ and $A'_2$.
    }%
  \end{center}
\end{figure}

This defines an embedding $L(X) \hookrightarrow L(X')$ which maps any semi-algebraic
subset $Y$ of $X$ closed in $X$ to the preimage $Y'$ of $Y$ via this
projection. Then $A'$ is the union of a copy of $A_1 = A \cap X_1$ and
$A_2 = A \cap X_2$ glued along $B_1 \cap B_2$. These copies $A'_1$, $A'_2$
of $A_1$ and $A_2$ are non empty semi-algebraic subsets of $X'$,
closed in $X'$, containing $B'_1$ and $B'_2$ respectively, such that: 
\begin{displaymath}
  A'_1 \cap A'_2 = B'_1 \cap B'_2
\end{displaymath}
The additional property that $A'_1$ (resp. $A'_2$)  is the topological
closure in $X'$ of $A' \setminus A'_2$ (resp. $A' \setminus A'_1$) then follows from
the assumption that $B_1 \cup B_2 \ll A$ and the fact that we reduced to
the case when $A'_1$ and $A'_2$ are equidimensional. 

\begin{proof}
The above geometric construction {\em is} a proof, provided an appropriate
dictionary between real semi-algebraic sets and elements of co-Heyting
algebras is given. However it would be longer to set explicitly this
dictionary than to hide the geometric intuition in a shorter
combinatorial proof. This is what we do now.

For each $x\in\jirr(L)$ such that $x \nleq b_2$ (resp $x \nleq b_1$) let
$\xi_{x,1}$ (resp. $\xi_{x,2}$) be a new symbol. For each $x\in\jirr(L)$ such that
$x \leq b_1 \meet b_2$ let $\xi_{x,0}$ be a new symbol. Let
$\mathcal{I}$ be the set of all these symbols and
define an order on $\mathcal{I}$ as follows: 
\begin{displaymath}
  \xi_{y,j} \leq \xi_{x,i} \quad\Leftrightarrow\quad y \leq x\hbox{ and }\{i,j\}\neq\{1,2\}
\end{displaymath}
The map $\pi:\xi_{x,i}\mapsto x$ defines an increasing projection of
$\cI$ onto $\jirr(L)$. For every $\zeta \in \cI$ and every $x \in \jirr(L)$
such that $\pi(\zeta)\leq x$ there exists $\xi$ such that $\pi(\xi)=x$ and $\zeta \leq \xi$.
Indeed if $\zeta=\xi_{y,1}$ then $\pi(\zeta)=y\nleq b_2$ so $x\nleq b_2$, hence $\xi_{x,1}$
exists and does the job. A symmetric argument applies if $\zeta=\xi_{y,2}$.
On the other hand if $\zeta=\xi_{y,0}$ then the existence of $\xi_{x,0}$ is
not guaranteed, but the existence of at least one among
$\xi_{x,0},\xi_{x,1},\xi_{x,2}$ is. Just take one of them. Thus
proposition~\ref{pr:finite-injection} gives an $\ltc$\--embedding $\varphi$
of $L$ into $\dec(\cI)$. For any $x \in \jirr(L)$ we have:
\begin{displaymath}
  \varphi(x) = \left\{
  \begin{array}{cl}
    \xi_{x,0}^\downarrow & \hbox{if $x \leq b_1 \meet b_2$}\\ 
    \xi_{x,1}^\downarrow & \hbox{if $x \leq b_1$ and $x \nleq b_2$} \\
    \xi_{x,2}^\downarrow & \hbox{if $x \leq b_2$ and $x \nleq b_1$} \\
    \xi_{x,1}^\downarrow \cup \xi_{x,2}^\downarrow & \hbox{otherwise.}
  \end{array} \right.
\end{displaymath}

Let $a_1,\ldots,a_r$ be the join irreducible components of $a$.
None of the $a_i$'s is smaller than $b_1$ or $b_2$ because by 
assumption $b_1 \join b_2 \ll a$, so each $\varphi(a_i)=\xi_{a_i,1}^\downarrow \cup \xi_{a_i,2}^\downarrow$. 
Define:
\begin{displaymath}
  \alpha_1=\bigcup_{1\leq i\leq r} \xi_{a_i,1}^\downarrow \quad\hbox{ and }\quad
  \alpha_2=\bigcup_{1\leq i\leq r} \xi_{a_i,2}^\downarrow 
\end{displaymath}
By construction $\varphi(a)-\alpha_1 = \alpha_2$ and $\varphi(a)-\alpha_2 = \alpha_1$ and both are non
empty since $r\geq 1$ (here we use that $a\neq\ZERO$). Moreover, for any
join irreducible element $x$ of $L$ such that $x \leq b_1$, we have $x \leq
a_j$ for some $j \leq r$. If $x\nleq b_2$, by definition of the order on
$\cI$ it follows that $\xi_{a_j,1}$ exists and $\xi_{x,1} \leq \xi_{a_j,1}$ hence:
\begin{displaymath}
  \varphi(x)=\xi_{x,1}^\downarrow \subseteq \xi_{a_j,1}^\downarrow \subseteq \alpha_1
\end{displaymath}
On the other hand if $x\leq b_2$ then $x\leq b_1\meet b_2$ so
$\xi_{x,0}$ exists. Since $\xi_{a_j,k}$ exists for some $k\in\{0,1,2\}$ and
$\xi_{x,0}\leq\xi_{a_j,k}$ we get:
\begin{displaymath}
  \varphi(x)=\xi_{x,0}^\downarrow \subseteq \xi_{a_j,k}^\downarrow \subseteq \alpha_1
\end{displaymath}
Thus in any case $\varphi(x)\subseteq\alpha_1$. It follows that $\varphi(b_1) \subseteq \alpha_1$, and
symmetrically $\varphi(b_2) \subseteq \alpha_2$.

It remains to check that $\alpha_1 \cap \alpha_2 = \varphi(b_1)\cap \varphi(b_2)$.
In order to do this, let $\xi$ be any element of $\cI$ and $x=\pi(\xi)$.
It is sufficient to prove that $\xi^\downarrow \subseteq \alpha_1 \cap \alpha_2$ if
and only if $\xi^\downarrow \subseteq \varphi(b_1) \cap \varphi(b_2)$ 

If $\xi^\downarrow \subseteq \varphi(b_1) \cap \varphi(b_2)$ then $x \leq b_1\meet b_2$ hence $\xi=\xi_{x,0}$ 
and $x \leq a_i$ for some $i\leq r$. It follows that $\xi_{x,0}\leq \xi_{a_i,1}$ so 
$\xi^\downarrow \subseteq \alpha_1$, and $\xi_{x,0}\leq \xi_{a_i,2}$ so $\xi^\downarrow \subseteq \alpha_1$. With other words 
$\xi^\downarrow \subseteq \alpha_1 \cap \alpha_2$. 

Conversely if $\xi^\downarrow \subseteq \alpha_1 \cap \alpha_2$ then there exists $i,j \leq r$ such that
$\xi^\downarrow \subseteq \xi_{a_i,1}$ and $\xi^\downarrow \subseteq \xi_{a_j,2}^\downarrow$. Thanks to the definition of
the ordering on $\mathcal{I}$ this implies that $\xi=\xi_{x,0}$ hence $x \leq
b_1 \meet b_2$ and so $\xi^\downarrow \subseteq \varphi(b_1) \cap \varphi(b_2)$. 
\end{proof}

\begin{theorem}\label{th:EC-V1}
  Every co-Heyting algebra existentially closed in $\cV_1$
  satisfies the density axiom D1 and the splitting axiom S1.
\end{theorem}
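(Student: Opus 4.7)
The plan is to deduce theorem~\ref{th:EC-V1} by combining the two finite‐extension lemmas~\ref{le:D1} and~\ref{le:S1} with the model‐theoretic transfer lemma~\ref{le:non-sense-fin}. The key observation is that the variety $\cV_1$ of all co‐Heyting algebras has the finite model property (recalled just after fact~\ref{fa:fmp}), so lemma~\ref{le:non-sense-fin} is available. Both axioms D1 and S1 are of the required shape $\forall x\,(\theta(x)\to\exists y\,\phi(x,y))$ with $\theta$ and $\phi$ quantifier-free in $\ltc$; in particular the relation $b\ll a$, being defined by $a-b=a\land b\leq a$, is quantifier-free.

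For density, I would set $x=(a,c)$, take
\[
\theta(x)\equiv (c\ll a)\wedge(a\neq\ZERO),\qquad \phi(x,y)\equiv (y\neq\ZERO)\wedge(c\ll y)\wedge(y\ll a),
\]
so that the axiom D1 is precisely $\forall x\,(\theta(x)\to\exists y\,\phi(x,y))$. Lemma~\ref{le:D1} then says exactly that for any finite $L_0\in\cV_1$ and any tuple $(a,c)$ with $L_0\models\theta(a,c)$, there is a finite extension $L_1$ (which is automatically still in $\cV_1$, since $\cV_1$ contains all co-Heyting algebras) satisfying $\exists y\,\phi(a,c,y)$. Applying lemma~\ref{le:non-sense-fin} immediately yields that every algebra existentially closed in $\cV_1$ satisfies D1.

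For splitting, I would proceed analogously with $x=(a,b_1,b_2)$, $y=(a_1,a_2)$,
\[
\theta(x)\equiv (b_1\join b_2\ll a)\wedge(a\neq\ZERO),\qquad \phi(x,y)\equiv (a-a_2=a_1\geq b_1)\wedge(a-a_1=a_2\geq b_2)\wedge(a_1\meet a_2=b_1\meet b_2)\wedge(a_1\neq\ZERO)\wedge(a_2\neq\ZERO),
\]
so that S1 takes the required form. Lemma~\ref{le:S1} provides the finite extension needed for the hypothesis of lemma~\ref{le:non-sense-fin}, and we conclude that every existentially closed algebra in $\cV_1$ satisfies S1.

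There is really no obstacle here beyond matching notation: both lemmas~\ref{le:D1} and~\ref{le:S1} have been stated so as to produce exactly the witness required in the existential part of the axiom, and because $\cV_1$ is the variety of \emph{all} co-Heyting algebras, no closure condition needs to be checked on the constructed finite extensions. The whole content of the theorem is thus packaged in the combinatorial work already done in sections \ref{se:prereq} and in lemmas~\ref{le:D1}--\ref{le:S1}; the proof itself is just two lines of bookkeeping.
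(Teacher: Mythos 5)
Your proposal is correct and follows essentially the same route as the paper: both rewrite D1 and S1 in the form $\forall x\,(\theta(x)\to\exists y\,\phi(x,y))$ with $\theta,\phi$ quantifier-free, invoke lemmas~\ref{le:D1} and~\ref{le:S1} to supply the finite extensions, and conclude by lemma~\ref{le:non-sense-fin} using the finite model property of $\cV_1$. The only difference is that you spell out the formulas $\theta$ and $\phi$ explicitly, which the paper leaves implicit.
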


\begin{proof}
These two axioms can be written under the following form:
\begin{displaymath}
  \forall x\;(\theta(x) \longrightarrow \exists y\;\phi(x,y)) 
\end{displaymath}
where $\theta(x)$ and $\phi(x,y)$ are quantifier-free $\ltc$\--formulas. In
both cases we have shown in lemmas~\ref{le:D1} and 
\ref{le:S1} that for every finite co-Heyting algebra
$L$ and every tuple $a$ of elements of $L$ such that $L\models\theta(a)$, there
exists an extension $L'$ of $L$ which satisfies $\exists y\;\phi(a,y)$. 
The result follows, by lemma~\ref{le:non-sense-fin}.
\end{proof}

Here is a partial converse of theorem~\ref{th:EC-V1}.

\begin{theorem}\label{th:embed-V1}
  Let $L$ be a co-Heyting algebra satisfying the density axiom D1 and
  the splitting axiom S1. Let $L_0$ be a finite subalgebra of $L$. Let
  $L_1$ be a finite co-Heyting algebra containing $L_0$. Then there
  exists an embedding of $L_1$ into $L$ which fixes every point of
  $L_0$. 
\end{theorem}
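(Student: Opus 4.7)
The plan is to reduce, via the classification of minimal extensions from Section~\ref{se:minim-exten}, the embedding problem to the realization of signatures inside $L$, and then to solve the latter by applying S1 and D1.

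First I would invoke Proposition~\ref{pr:tour-primitive} to write $L_1$ as the top of a finite tower of primitive extensions over $L_0$. An easy induction on the length of this tower (where at each step the image of the current finite subalgebra plays the role of a new ``$L_0$'') reduces the problem to the case of a single primitive extension $L_1=L_0\gen{x_1,x_2}$ of a finite subalgebra $L_0$ of $L$, with signature $(g,H,r)$ in $L_0$. By Corollary~\ref{co:isom-sign} it then suffices to produce inside $L$ some primitive tuple over $L_0$ having the same signature; the resulting $L_0$\--isomorphism provides the desired embedding.

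When $r=2$, with $H=\{h_1,h_2\}$ and $h_1\join h_2=g^-$, I would apply S1 to $a=g$, $b_1=h_1$, $b_2=h_2$. The hypothesis $h_1\join h_2\ll g$ holds because $g\in\jirr(L_0)$ forces $g^-\ll g$. The output $(y_1,y_2)$ is then a primitive tuple with the prescribed signature: the identities $g-y_{3-i}=y_i$ are exactly what S1 provides, and a short computation using $g^-=h_1\join h_2$ together with $y_1\meet y_2=h_1\meet h_2$ shows $g^-\meet y_i=h_i$. Finally neither $y_i$ can lie in $L_0$, because otherwise $y_{3-i}=g-y_i$ would too, forcing $y_1\join y_2=g\leq g^-$.

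The case $r=1$ is the main obstacle: D1 alone produces an element $y$ with $h\ll y\ll g$ but gives no control over $g^-\meet y$, while any element delivered by S1 lies strictly between $g$ and a non-trivial ``complement'' and is therefore not $\ll g$. I would combine the two axioms as follows. First apply S1 to $a=g$, $b_1=g^-$, $b_2=h$ to obtain non-zero $a'_1,a'_2\in L$ with $a'_1\geq g^-$, $a'_2\geq h$, $a'_1\meet a'_2=h$, $a'_1\join a'_2=g$, and $a'_1-a'_2=a'_2$. Since $h\leq g^-\leq a'_1$, the monotonicity $w\leq v\Rightarrow u-v\leq u-w$ applied to $a'_2-a'_1=a'_2$ yields $a'_2-h=a'_2$, i.e.\ $h\ll a'_2$. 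Then apply D1 inside $a'_2$ to find a non-zero $y$ with $h\ll y\ll a'_2$. The element $y$ automatically satisfies $y\ll g$, because $y\leq a'_2$ together with $a'_1-a'_2=a'_1$ forces $a'_1-y=a'_1$, so
\begin{displaymath}
g-y=(a'_1-y)\join(a'_2-y)=a'_1\join a'_2=g.
\end{displaymath}
The equality $g^-\meet y=h$ follows from $g^-\leq a'_1$ and $a'_1\meet a'_2=h$, and the condition $y\notin L_0$ is a direct check: otherwise $y<g$ would give $y\leq g^-$ hence $y=g^-\meet y=h$, contradicting $h\ll y$ (which, in the borderline case $h=\ZERO$, would force $y=\ZERO$). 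The tuple $(y,y)$ then realizes the signature $(g,\{h\},1)$ in $L$, which completes the proof.
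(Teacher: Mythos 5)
Your proposal is correct and follows essentially the same route as the paper: reduction to a single primitive extension via Proposition~\ref{pr:tour-primitive} and Corollary~\ref{co:isom-sign}, then S1 applied to $(g,h_1,h_2)$ when $r=2$, and S1 applied to $(g,g^-,h)$ followed by D1 inside the piece lying over $h$ when $r=1$, with the same verifications of $g^-\meet y=h$, $y\ll g$ and $y\notin L_0$. The only blemish is the listed identity ``$a'_1-a'_2=a'_2$'', clearly a slip for $g-a'_1=a'_2$; the identities you actually use later, $a'_2-a'_1=a'_2$ and $a'_1-a'_2=a'_1$, are the correct ones and do follow from the S1 output since $g=a'_1\join a'_2$.
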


\begin{proof}
By an immediate induction based on
corollary~\ref{co:mini-prim}, we reduce to the case when $L_1$
is generated over $L_0$ by a primitive tuple. Let $\sigma=(g,\{h_1,h_2\},r)$ be the
signature of $L_1$ in $L_0$. By corollary~\ref{co:isom-sign} it is sufficient to
prove that $\sigma$ is the signature of a primitive tuple of elements $x_1,
x_2 \in L$. 

{\it Case 1:} $r=1$ so $h_1=h_2$. Since $h_1 \leq g^- \ll g$, the splitting
property $S1$ applied to $g,g^-,h_1$ gives non zero elements $y_1, y_2$ in
$L$ such that:
\begin{displaymath}
  \begin{array}{c}
    g-y_1=y_2 \geq g^- \\
    g-y_2=y_1 \geq h_1 \\
    y_1 \meet y_2 = h_1
  \end{array}
\end{displaymath} 
We have $y_1 - h_1=(g-y_2)-h_1=(g-h_1)-y_2=g-y_2=y_1$ hence $h_1 \ll
y_1$. The density axiom D1 then gives $x \in L \setminus \{\ZERO\}$ such that $h_1
\ll x \ll y_1$. By construction:
\begin{displaymath}
  h_1 \leq g^- \meet x \leq y_2 \meet y_1 =h_1
\end{displaymath}
So $g^- \meet x =h_1 \in L_0$ and $g^-\meet x \ll x \ll g$, from which it follows
that $(x,x)$ is a primitive tuple with signature $(g,\{h_1\},1)=\sigma$ in $L_0$. 

{\it Case 2:} $r=2$ so $h_1 \join h_2 = g^-$. 
Since $g^- \ll g$ the splitting property S1 applied to $g,h_1,h_2$
gives non zero elements $y_1, y_2$ in $L$ such that:
\begin{displaymath}
  \begin{array}{c}
    g-y_1=y_2 \geq h_2 \\
    g-y_2=y_1 \geq h_1 \\
    y_1 \meet y_2 = h_1 \meet h_2 
  \end{array}
\end{displaymath}
We have $h_1 \leq g^- \meet y_1 = (h_1 \meet y_1) \join (h_2 \meet y_1) =h_1 \join (h_2 \meet
y_1)$. On the other hand $h_2 \meet y_1 \leq y_2 \meet y_1 = h_1 \meet h_2 \leq h_1$.
Therefore $g^- \meet y_1 =h_1$ and symmetrically $g^- \meet y_2 = h_2$ so both
of them belong to $L_0$. It follows that $(y_1,y_2)$ is a primitive
tuple with signature $(g,\{h_1,h_2\},2)=\sigma$ in $L_0$. 
\end{proof}

\begin{remark}\label{re:exist-sign}
  The above proof shows, incidentally, that any given signature in a
  finite co-Heyting algebra $L_0$ is the signature of an extension of
  $L_0$ generated by a primitive tuple (inside an existentially closed
  extension of $L_0$). 
\end{remark}

\begin{corollary}\label{co:embed-V1-fini} 
  If $L$ is a non-trivial co-Heyting algebra satisfying the axioms D1 and S1 then
  any finite non-trivial co-Heyting algebra embeds into $L$.
\end{corollary}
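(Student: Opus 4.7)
The plan is to reduce the corollary to a direct application of Theorem~\ref{th:embed-V1}. Given any finite co-Heyting algebra $L_1$, I want to produce an embedding $L_1\hookrightarrow L$. The theorem provides embeddings of finite algebras $L_1$ containing a common finite subalgebra $L_0$ of $L$, and fixing $L_0$ pointwise; the corollary is simply the special case where $L_0$ is chosen as small as possible.

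Concretely, I would take $L_0=\{\ZERO,\UN\}$, the minimal $\ltc$\--substructure present in every co-Heyting algebra. A quick check shows $L_0$ is closed under $\join$, $\meet$ and $-$ (one has $\UN-\ZERO=\UN$, $\ZERO-\UN=\ZERO$ and $a-a=\ZERO$), so it is a finite subalgebra of both $L$ and $L_1$; the inclusion $L_0\subseteq L_1$ is canonical since any co-Heyting algebra contains a designated $\ZERO$ and $\UN$. Applying Theorem~\ref{th:embed-V1} to this data yields an embedding $\varphi:L_1\hookrightarrow L$ fixing $L_0$ pointwise. In particular $\varphi$ is an embedding, which is all that is required.

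The only mildly delicate point is the degenerate case $L=\Lun$: then $\ZERO=\UN$ in $L$ and the axioms D1 and S1 are vacuously satisfied, but only $\Lun$ itself embeds in $L$. This is consistent with the above scheme, since the identification $\{\ZERO_L\}=\{\UN_L\}$ forces any $L_1$ admitting such an embedding to collapse to $\Lun$ as well. Otherwise $L$ is nontrivial and the application of Theorem~\ref{th:embed-V1} goes through with no obstruction. There is no real difficulty here: all the work has already been done in Theorem~\ref{th:embed-V1} and in the primitive-tuple analysis of Section~\ref{se:minim-exten}.
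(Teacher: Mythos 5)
Your proof is correct and is essentially the paper's own argument: the paper likewise takes $L_0=\Ldeux=\{\ZERO,\UN\}$ as the common finite subalgebra of $L$ and of any $L_1$ and applies Theorem~\ref{th:embed-V1}. Your caveat about the degenerate case $L=\Lun$ is fair (the paper tacitly assumes $\ZERO\neq\UN$ in $L$), but it does not affect the argument.
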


\begin{proof}
$\Ldeux$ is a common subalgebra of $L$ and any co-Heyting
algebra $L_1$. If $L_1$ is finite, theorem~\ref{th:embed-V1} applies
to $L_0=\Ldeux$, $L_1$ and $L$.
\end{proof}

\begin{corollary}\label{co:embed-V1-sat}
  If $L$ is a co-Heyting algebra satisfying the axioms D1 and S1,
  $L_0$ a finite subalgebra of $L$, and $L'$ any extension of $L_0$,
  then $L'$ embeds over $L_0$ into an elementary extension of $L$
  (or in $L$ itself if $L$ is sufficiently saturated). 
\end{corollary}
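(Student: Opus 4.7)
The plan is a compactness argument. To produce an elementary extension $L^\ast$ of $L$ containing a copy of $L_1$ over $L_0$, it suffices to show that the theory $T$ consisting of the elementary diagram of $L$ together with the quantifier-free diagram of $L_1$ (in which the constants naming elements of $L_0$ are identified in the two diagrams) is consistent. By the compactness theorem this reduces to showing that every finite subset of the quantifier-free diagram of $L_1$ over $L_0$ can be realized in $L$ itself.

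Such a finite subset can be written as a quantifier-free $\ltc$-formula $\Psi(\bar x, \bar y)$, where $\bar x$ is a tuple from $L_0$, $\bar y$ is a tuple from $L_1 \setminus L_0$, and $L_1 \models \Psi(\bar x, \bar y)$. To dispose of the $L_0$-parameters I would replace them by fresh variables $\bar u$ and consider the quantifier-free formula $\varphi(\bar u, \bar y) = \mathrm{diag}(L_0)(\bar u) \wedge \Psi(\bar u, \bar y)$, where $\mathrm{diag}(L_0)(\bar u)$ is the (finite) quantifier-free diagram of $L_0$ with each constant replaced by the corresponding variable. Since $L_0 \subseteq L_1$, clearly $L_1 \models \exists \bar u\, \exists \bar y\, \varphi(\bar u, \bar y)$, and the finite model property of $\cV_1$ (via fact~\ref{fa:fmp}) delivers a finite algebra $L_1' \in \cV_1$ and tuples $\bar u', \bar y'$ in $L_1'$ with $L_1' \models \varphi(\bar u', \bar y')$. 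The clause $\mathrm{diag}(L_0)(\bar u')$ provides an $\ltc$-embedding $L_0 \hookrightarrow L_1'$ sending $\bar x$ to $\bar u'$; identifying $L_0$ with its image turns $L_1'$ into a finite extension of $L_0$ in which $\Psi(\bar x, \bar y')$ holds. Theorem~\ref{th:embed-V1} then yields an embedding $\iota : L_1' \hookrightarrow L$ fixing $L_0$ pointwise, and $\iota(\bar y')$ witnesses $L \models \exists \bar y\, \Psi(\bar x, \bar y)$. This gives the consistency of $T$ and hence the desired elementary extension. The parenthetical strengthening then follows by a standard realisation-of-types argument: if $L$ is $|L_1|^+$-saturated, the partial type of $L_1$ over $L_0$ realized in $L^\ast$ is already realized in $L$, yielding the embedding $L_1 \hookrightarrow L$.

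The main (and essentially only) delicate point is the appeal to the finite model property. In $\cV_1$ the $\ltc$-subalgebra generated inside $L_1$ by $L_0 \cup \bar y$ may very well be infinite, so theorem~\ref{th:embed-V1} cannot be applied directly to any sub-$\ltc$-algebra of $L_1$; FMP is precisely what delivers an auxiliary finite algebra $L_1' \in \cV_1$ carrying a faithful copy of $L_0$ together with the finitely many atomic data encoded by $\Psi$, after which the embedding theorem can be invoked without trouble.
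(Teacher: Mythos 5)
Your proposal is correct and follows essentially the same route as the paper: reduce by compactness to realizing in $L$ each existential formula over $L_0$ satisfied in $L_1$, use the finite model property (fact~\ref{fa:fmp}) together with the finite quantifier-free diagram of $L_0$ to produce a finite algebra containing a copy of $L_0$ and a witness, and then invoke theorem~\ref{th:embed-V1} to embed that finite algebra into $L$ over $L_0$. The saturation remark at the end is also the intended reading of the parenthetical claim.
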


\begin{proof}
By standard model-theoretic argument, it suffices to show that any
existential formula with parameters in $L_0$ satisfied in $L'$ is
satisfied in $L$. Let $a$ be the list of all elements of $L_0$ and
$\Delta(a)$ be the conjunction of the quantifier free diagram of $L_0$, so
that a co-Heyting algebra is a model of the formula $\Delta(a)$ if and only
if $a$ enumerates a substructure isomorphic to $L_0$. 
Let $\exists x\;\theta(x,a)$ be any existential formula with parameters in $L_0$
satisfied in $L'$ (where $x$ is a tuple of variables). By
proposition~\ref{pr:fmp} there is a finite co-Heyting algebra $L_1$
satisfying $\exists x\;\theta(x,a)\land \Delta(a)$. Since $L_1$ models $\Delta(a)$ it contains
a subalgebra isomorphic to $L_0$, which we can then identify to $L_0$.
By corollary~\ref{co:embed-V1-fini}, $L_1$ embeds into $L$ over $L_0$
hence $L$ itself models $\exists x\;\theta(x,a)$ and the conclusion follows.
\end{proof}

\begin{remark}\label{re:embed-V1}
  If $L$ is a non-trivial co-Heyting algebra satisfying the axioms D1 and S1,
  then every non-trivial `co-Heyting algebra $L'$ embeds into an elementary extension of
  $L$ by corollary~\ref{co:embed-V1-sat} since $\Ldeux$ is a finite
  common subalgebra of $L'$ and $L$. 
\end{remark}

\section{Density and splitting in $\cV_2$}%

We introduce the following axioms:
\begin{description}
  \item{\bf [Density D2]} 
    Same as D1.
  \item{\bf [Splitting S2]} 
    Same as S1 with the additional assumption that $b_1 \meet b_2
    \meet (\UN-(\UN-a))=\ZERO$
\end{description}

\begin{fact}\label{fa:prod-V2}
  Let $L_0$ be a {\em finite} co-Heyting algebra. Let $x_1,\cdots,x_r$ be
  the join irreducible components of $\UN$ in $L_0$ (that is the
  maximal elements of $\jirr(L_0)$). The following conditions are
  equivalent:
  \begin{enumerate}
    \item\label{it:V2}%
      $L_0$ belongs to $\cV_2$.
    \item\label{it:V2-comp-irr}%
      $x_i \meet x_j = \ZERO$ whenever $i \neq j$.
    \item\label{it:V2-irr}%
      $L_0$ is isomorphic to a product of co-Heyting algebras
      $L_1,\dots,L_r$ such $\UN_{L_i}$ is join irreducible.
  \end{enumerate}
\end{fact}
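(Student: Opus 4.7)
The plan is to prove the cycle (\ref{it:V2}) $\Rightarrow$ (\ref{it:V2-comp-irr}) $\Rightarrow$ (\ref{it:V2-irr}) $\Rightarrow$ (\ref{it:V2}), using throughout the isomorphism $L_0\cong\dec(\jirr(L_0))$ that lets us compute the co-Heyting operations combinatorially on decreasing subsets of the finite poset $P=\jirr(L_0)$. Note first that the join irreducible components of $\UN$ in $L_0$ are exactly the maximal elements of $P$: indeed $\UN=\jjoin\jirr(L_0)$ and any irredundant join decomposition of $\UN$ must use precisely the maximal join-irreducibles.

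For (\ref{it:V2}) $\Rightarrow$ (\ref{it:V2-comp-irr}), I instantiate the defining axiom of $\cV_2$ at $a=x_i$. Working in $\dec(P)$, the element $x_i$ corresponds to $x_i^\downarrow$, and $\UN-x_i$ corresponds to $(P\setminus x_i^\downarrow)^\downarrow$. Since the $x_k$ are distinct maximal elements, any $y\in P$ not below $x_i$ lies below some $x_k$ with $k\neq i$, so
\begin{displaymath}
(P\setminus x_i^\downarrow)^\downarrow \;=\; \bigcup_{k\neq i} x_k^\downarrow,
\end{displaymath}
that is $\UN-x_i=\jjoin_{k\neq i}x_k$. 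Iterating the same computation, $(P\setminus\bigcup_{k\neq i}x_k^\downarrow)^\downarrow=x_i^\downarrow$ because any $z\in P$ not below any $x_k$ ($k\neq i$) must be below $x_i$, giving $\UN-(\UN-x_i)=x_i$. The axiom then reads $\bigl(\jjoin_{k\neq i}x_k\bigr)\meet x_i=\ZERO$, whence $x_i\meet x_k=\ZERO$ for all $k\neq i$ by distributivity.

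For (\ref{it:V2-comp-irr}) $\Rightarrow$ (\ref{it:V2-irr}), the hypothesis $x_i\meet x_j=\ZERO$ means $x_i^\downarrow\cap x_j^\downarrow=\emptyset$ in $P$ (since a nonempty element of the intersection would yield a nonzero common lower bound). Because every element of $P$ lies below some maximal $x_i$, the poset $P$ decomposes as a disjoint union $P_1\sqcup\cdots\sqcup P_r$ with $P_i=x_i^\downarrow\cap P$. This gives $L_0\cong\dec(P)\cong\prod_i\dec(P_i)$, and in each factor $L_i=\dec(P_i)$ the top element is $x_i$, which is join irreducible by construction (it is the unique maximal element of $P_i$).

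For (\ref{it:V2-irr}) $\Rightarrow$ (\ref{it:V2}), I use that $\cV_2$ is equationally defined, hence closed under direct products, so it suffices to verify the axiom in each $L_i$. When $\UN$ is join irreducible in $L_i$, for any $a<\UN$ one has $\UN\notin a^\downarrow$ in $\jirr(L_i)$ (otherwise $a^\downarrow\cap\jirr(L_i)$ would contain $\UN$ and hence all of $\jirr(L_i)$, forcing $a=\UN$); then $(\jirr(L_i)\setminus a^\downarrow)^\downarrow=\jirr(L_i)$, so $\UN-a=\UN$ and $\UN-(\UN-a)=\ZERO$. For $a=\UN$, $\UN-a=\ZERO$. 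In either case the defining equation of $\cV_2$ holds.

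The only slightly delicate step is the computation of $\UN-x_i$ and $\UN-(\UN-x_i)$ in the first implication; everything else reduces to bookkeeping about decreasing subsets of a finite poset and the standard fact that varieties are closed under products.
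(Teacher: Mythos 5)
Your proof is correct and follows essentially the same route as the paper: the key computation $\UN-x_i=\jjoin_{k\neq i}x_k$ (hence $\UN-(\UN-x_i)=x_i$) is exactly the identity the paper invokes for (\ref{it:V2-comp-irr})$\Rightarrow$(\ref{it:V2}), and your decomposition $P=P_1\sqcup\cdots\sqcup P_r$ with $\dec(P)\cong\prod_i\dec(P_i)$ is the poset-side restatement of the paper's quotients $L_i=L/y_i^\downarrow$. The only cosmetic difference is that you close the cycle by verifying the $\cV_2$ equation directly in each factor with join-irreducible top, where the paper instead routes (\ref{it:V2-irr}) through (\ref{it:V2-comp-irr}); both are fine.
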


This is folklore, but let us recall the argument. 

Clearly $\UN_{L_0}=\ZERO_{L_0}$ if and only if $r=0$, in which case
the whole fact is trivial. So let's assume that $r \geq 1$. 

(\ref{it:V2})$\Rightarrow$(\ref{it:V2-comp-irr})$\Leftarrow$(\ref{it:V2-irr}) is clear.
(\ref{it:V2})$\Leftarrow$(\ref{it:V2-comp-irr}) is an easy computation using
that $1-x$ is the join of all the join-irreducible components of $\UN$
which are not in $x^\downarrow$. (\ref{it:V2-comp-irr})$\Rightarrow$(\ref{it:V2-irr}) is
true because if we let $y_i=\jjoin_{j\neq i}x_j$ and $L_i=L/y_i^\downarrow$ for every
$i \leq r$, then it is an easy exercise to check that each $\UN_{L_i}$ is
join irreducible and to derive from (\ref{it:V2-comp-irr}) that the
natural map from $L$ to the product $L_1\times\cdots\times L_r$ is an isomorphism.

\begin{lemma}\label{le:D2}
  Let $L$ be a finite algebra in $\cV_2$ such that $\UN$ is join
  irreducible. Let $a,c$ be any two elements of $L$
  such that $c \ll a$. Then there exists an extension $L'$  of $L$ in 
  $\cV_2$ and an element $b$ in $L'$ such that: 
\begin{displaymath}
  c \ll b \ll a 
\end{displaymath} 
If moreover $a \neq \ZERO$ then one can require that $b \neq \ZERO$. 
\end{lemma}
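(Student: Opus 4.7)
The plan is to re-use almost verbatim the construction of Lemma~\ref{le:D1}, and then simply observe that in this restricted setting ($\UN$ join irreducible in $L$) the resulting extension $L' = \dec(\cI)$ automatically belongs to $\cV_2$.

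First I would dispose of the trivial case $a = \ZERO$: the hypothesis $c \ll a$ then forces $c = \ZERO$, and one may take $L' = L$ and $b = \ZERO$. So assume $a \neq \ZERO$ and let $a_1,\dots,a_r$ ($r\geq 1$) be its join irreducible components. I would then build $\cI$ and $\pi : \cI \to \jirr(L)$ exactly as in Lemma~\ref{le:D1}, adjoining a fresh element $\alpha_i$ immediately below each $a_i$, and obtain by Proposition~\ref{pr:finite-injection} an $\ltc$-embedding $\varphi: L \hookrightarrow L' := \dec(\cI)$ together with the element $b = \jjoin_{i=1}^{r} \alpha_i^\downarrow \in L'$. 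The same argument as in Lemma~\ref{le:D1} yields $\varphi(c) \ll b \ll \varphi(a)$ and $b \neq \ZERO$ (since $r \geq 1$).

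The only new point is to check that $L' \in \cV_2$. Since $\UN$ is join irreducible in $L$, it is the unique maximum of $\jirr(L)$. By construction of $\cI$, each new element $\alpha_i$ satisfies $\alpha_i < a_i \leq \UN$, so $\UN$ remains the unique maximum of $\cI$. Consequently the top element $\UN_{L'} = \cI$ of $L'$ is the only maximal element of $\jirr(L')$, i.e. $\UN_{L'}$ is join irreducible in $L'$. Fact~\ref{fa:prod-V2} (applied with a single factor) then gives $L' \in \cV_2$.

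There is essentially no obstacle here: the construction of Lemma~\ref{le:D1} is \emph{conservative} with respect to the property ``$\UN$ is join irreducible'', which is exactly the local form of membership in $\cV_2$ characterised by Fact~\ref{fa:prod-V2}. This is precisely why the statement of Lemma~\ref{le:D2} assumes $\UN$ join irreducible in $L$ rather than just $L \in \cV_2$: the hypothesis is the one that survives under the glueing used to witness density, and the general case of $\cV_2$ will then be recovered later by decomposing $L$ as a product via Fact~\ref{fa:prod-V2} and invoking Lemma~\ref{le:non-sense-prod}.
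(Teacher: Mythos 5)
Your proof is correct, and it reaches essentially the same extension as the paper by a slightly different route. The paper's proof sets $L_0=x^{\downarrow}$ where $x$ is the unique predecessor of $\UN$, treats $a=\UN$ by inserting a single new element between $x$ and $\UN$, and for $\ZERO\neq a\leq x$ applies the density construction of Lemma~\ref{le:D1} inside the co-Heyting algebra $L_0$ and then re-attaches a fresh top element, so that the resulting algebra has a join irreducible top (hence lies in $\cV_2$) by construction. You instead run the construction of Lemma~\ref{le:D1} on all of $L$ at once and check that it is conservative for the property ``$\UN$ is join irreducible'': since each new point $\alpha_i$ is placed strictly below $a_i\leq\UN$, the poset $\cI$ keeps $\UN$ as its greatest element, so the top of $\dec(\cI)$ equals $\UN^{\downarrow}=\cI$ and is join irreducible, and Fact~\ref{fa:prod-V2} then gives $L'\in\cV_2$. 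The two routes produce isomorphic extensions; yours dispenses with the case split on $a=\UN$ versus $a\leq x$ (and with verifying that the embedding of $L_0$ extends to $L$ after adding a new top), at the price of the conservativity check, which you carry out correctly. Your closing observation about why the hypothesis is ``$\UN$ join irreducible'' rather than ``$L\in\cV_2$'' matches exactly how the lemma is consumed in Theorem~\ref{th:EC-V2} via Lemma~\ref{le:non-sense-prod}.
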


\begin{proof}
By assumption $\UN$ has a unique predecessor $x$, thus $L_0=x^\downarrow$
has a natural structure of co-Heyting algebra. 

If $a=\ZERO$ one can take $b=\ZERO$. 

If $a=\UN$ then $c \leq x$. Let $L'$ be the co-Heyting algebra
obtained by inserting one new element $b$ between $x$ and $\UN$. 
Then $a$ and $b$ are join irreducible in $L'$ and $c < b < a$ hence 
we are done.

Otherwise $\ZERO \neq a \leq x$ thus lemma~\ref{le:D1} gives an
$\ltc$\--embedding $\varphi$ of $L_0$ into a co-Heyting algebra $L_1$
containing a non zero element $b$ such that $c \ll b \ll a$. Let $L'$ be
the co-Heyting algebra obtained by adding to $L_1$ a new element on
the top. The embedding $\varphi$ extends uniquely to an $\ltc$\--embedding
of $L$ into $L'$ and we are done. 
\end{proof}

\begin{lemma}\label{le:S2}
  Let $L$ be a finite algebra in $\cV_2$ such that $\UN$ is join
  irreducible. Let $a,b_1,b_2$ in $L$ be such that 
  $b_1 \join b_2 \ll a$ and $b_1 \meet b_2 \meet (\UN-(\UN-a))=\ZERO$. Then there
  exists an extension $L'$ of $L$ in $\cV_2$ and elements $a_1$, $a_2$
  such that:
\begin{displaymath}
  \begin{array}{c} 
    a-a_2 = a_1 \geq b_1\\
    a-a_1 = a_2 \geq b_2\\
    a_1 \meet a_2 = b_1 \meet b_2 
  \end{array}
\end{displaymath}
If $a \neq \ZERO$ one can require that $a_1,a_2$ are both non zero. 
\end{lemma}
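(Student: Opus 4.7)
The plan is to treat $a=\ZERO$ separately, then exploit the assumption that $\UN$ is join irreducible in $L$ by splitting into two essentially different cases: $a=\UN$ and $\ZERO<a<\UN$. If $a=\ZERO$, then $b_1\join b_2\ll\ZERO$ forces $b_1=b_2=\ZERO$ and we take $L'=L$, $a_1=a_2=\ZERO$.

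If $a=\UN$, I would apply Lemma~\ref{le:S1} directly to $\UN,b_1,b_2$ in $L$, obtaining an extension $L'=\dec(\cI)$ together with non-zero $a_1,a_2$ satisfying the required algebraic relations. The only thing to check is that this $L'$ still belongs to $\cV_2$. Since $\UN$ is the unique maximal element of $\jirr(L)$, the maximal elements of $\cI$ are exactly $\xi_{\UN,1}$ and $\xi_{\UN,2}$, so the maximal join irreducibles of $L'$ are $\alpha_1=\xi_{\UN,1}^\downarrow$ and $\alpha_2=\xi_{\UN,2}^\downarrow$. From the order on $\cI$, an element $\xi_{y,j}$ lies below both $\alpha_1$ and $\alpha_2$ iff $j=0$, i.e.\ iff $y\leq b_1\meet b_2$. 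Now the extra S2 hypothesis specialises to $b_1\meet b_2=\ZERO$ because $\UN-(\UN-\UN)=\UN$, hence $\alpha_1\meet\alpha_2=\ZERO$ in $L'$. By fact~\ref{fa:prod-V2} this suffices to conclude $L'\in\cV_2$.

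If $\ZERO<a<\UN$, then $a\leq x$ where $x$ is the unique predecessor of $\UN$ in $L$, and $a,b_1,b_2$ all lie in $L_0=x^\downarrow$. A short verification (using that for $y,z\in L_0$ one has $y-z\leq y\leq x$) shows that the operations $\join,\meet,-$ of $L$ restricted to $L_0$ coincide with those computed inside the finite co-Heyting algebra $L_0$, so $b_1\join b_2\ll a$ holds in $L_0$. Applying Lemma~\ref{le:S1} inside $L_0$ produces a finite co-Heyting extension $L_1\supseteq L_0$ together with non-zero $a_1,a_2\in L_1$ satisfying the required identities. I would then form $L'$ by adjoining a fresh top element $\top$ above $L_1$. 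In $L'$, $\top$ is join irreducible (its unique predecessor is $\UN_{L_1}$) and is the unique maximal element of $\jirr(L')$, so $L'\in\cV_2$ by fact~\ref{fa:prod-V2}. Extending the embedding $L_0\hookrightarrow L_1\subseteq L'$ by sending $\UN_L$ to $\top$ gives an $\ltc$-embedding $L\hookrightarrow L'$; the only non-routine check is that $\UN_L-y$ maps to $\top-\psi(y)=\top$, which uses the join irreducibility of both $\UN_L$ in $L$ and $\top$ in $L'$.

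The main obstacle, and the point at which S2 genuinely differs from S1, is ensuring that the extension stays inside $\cV_2$ in the $a=\UN$ case. The extra hypothesis $b_1\meet b_2\meet(\UN-(\UN-a))=\ZERO$ is calibrated precisely for this purpose: when $a<\UN$ the factor $\UN-(\UN-a)$ vanishes (because $\UN-a=\UN$, so $\UN-(\UN-a)=\ZERO$) so the hypothesis is automatic and we may hide the splitting inside $x^\downarrow$ and add a new join irreducible top; when $a=\UN$ the factor equals $\UN$, forcing $b_1\meet b_2=\ZERO$, which is exactly what is needed to keep the two newly split top components disjoint in the S1 construction.
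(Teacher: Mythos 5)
Your proof is correct and follows the paper's overall strategy: the same three-way split on $a$ (namely $a=\ZERO$, $a=\UN$, $\ZERO<a<\UN$), and for $\ZERO<a<\UN$ the same reduction to the co-Heyting algebra $x^\downarrow$ (with $x$ the unique predecessor of $\UN$) followed by re-adjoining a join irreducible top, which keeps the result in $\cV_2$ by fact~\ref{fa:prod-V2}; your checks that the difference operation of $x^\downarrow$ agrees with that of $L$ and that $\UN-y$ maps to $\top-\psi(y)=\top$ are exactly the points that need saying. The one place where you genuinely diverge is the case $a=\UN$: the paper invokes a minimal extension generated by a primitive tuple of signature $(\UN,\{\ZERO\},2)$, whereas you rerun the explicit $\dec(\cI)$ construction of lemma~\ref{le:S1} and verify directly that the two maximal join irreducibles $\xi_{\UN,1}^\downarrow$ and $\xi_{\UN,2}^\downarrow$ of $\dec(\cI)$ meet in $\ZERO$, using that the S2 hypothesis forces $b_1\meet b_2=\ZERO$ when $a=\UN$. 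Your variant is actually the more robust one: a primitive tuple of signature $(\UN,\{\ZERO\},2)$ satisfies $\UN^-\meet a_i=\ZERO$ by definition of the signature, and since $b_i\ll\UN$ forces $b_i\leq\UN^-$, such a tuple can only satisfy $a_1\geq b_1$ and $a_2\geq b_2$ when $b_1=b_2=\ZERO$; lemma~\ref{le:S1} as you use it delivers $a_1\geq b_1$ and $a_2\geq b_2$ for arbitrary $b_1,b_2$ with $b_1\meet b_2=\ZERO$, so your route covers uniformly a subcase that the paper's shortcut, as literally stated, does not.
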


\begin{proof}
By assumption $\UN$ has a unique predecessor $x$, thus $L_0=x^\downarrow$
has a natural structure of co-Heyting algebra. 

{\it Case~1:} $a=\ZERO$. One can take $a_1=a_2=\ZERO$. 

{\it Case~2:} $\ZERO \neq a \leq x$. Lemma~\ref{le:S1} gives an
$\ltc$\--embedding $\varphi$ of $L_0$ into a co-Heyting algebra $L_1$
containing non zero elements $a_1,a_2$ with the required properties.
Let $L'$ be the co-Heyting algebra obtained by adding to $L_1$ a new
element on the top. Clearly $L'$ belongs to $\cV_2$ by
fact~\ref{fa:prod-V2} and the embedding $\varphi$ extends uniquely to an
$\ltc$\--embedding of $L$ into $L'$, so we are done. 

{\it Case~3:} $a=\UN$ and $x=\ZERO$. Then $L_0=\Ldeux$ embeds into
$\Ldeux\times\Ldeux\in\cV_2$ which gives the conclusion. 

{\it Case~4:} $a=\UN$ and $x\neq\ZERO$. Then by assumption $b_1 \meet b _2 =
\ZERO$. Lemma~\ref{le:S1} applied to $x,b_1,b_2$ gives an
$\ltc$\--embedding $\varphi$ of $L_0$ into a finite co-Heyting algebra $L_1$
containing non-zero elements $x_1,x_2$ such that $x-x_1=x_2\geq b_2$,
$x-x_2=x_1\geq b_1$ and $x_1\meet x_2=\ZERO$. Just as in case~2,
the co-Heyting algebra $L^\dag$ obtained by adding to $L_1$ a new element
on the top belongs to $\cV_2$, and $\varphi$ extends to an embedding of $L$
into $L^\dag$. Now $\{a,\{x_1,x_2\},2\}$ is a signature in $L^\dag$ since
$x_1\join x_2=x$ is the predecessor of $a=\UN$ in $L^\dag$. Let $L'$ be
an extension generated over $L^\dag$ by a primitive tuple $(a_1,a_2)$ with
signature $(a,\{x_1,x_2\},2)$ (see remark~\ref{re:exist-sign}). By
construction $a_1\geq a_1\meet x=x_1\geq b_1$ and symmetrically $a_2\geq b_2$. We
also have $a=a_1\join a_2$ and $a_1\meet a_2=x_1\meet x_2=\ZERO$. By
theorem~\ref{th:ext-primitive} $a_1,a_2$ are exactly the two join
irreducible components of $\UN$ in $L'$ hence $L'$ belongs to $\cV_2$
by fact~\ref{fa:prod-V2}, $a-a_1=a_2$ and $a-a_2=a_1$, so we are done. 
\end{proof}

\begin{theorem}\label{th:EC-V2}
  Every co-Heyting algebra existentially closed in $\cV_2$
  satisfies the density axiom D2 and the splitting axiom S2.
\end{theorem}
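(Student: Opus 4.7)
The strategy mirrors the proof of theorem~\ref{th:EC-V1}, but the elementary lemma~\ref{le:non-sense-fin} has to be replaced by the more elaborate lemma~\ref{le:non-sense-prod}. This substitution is forced because lemmas~\ref{le:D2} and~\ref{le:S2} are only established for finite algebras in $\cV_2$ whose greatest element is join irreducible, not for arbitrary finite algebras in $\cV_2$. Fortunately lemma~\ref{le:non-sense-prod} was designed precisely to bridge this gap via a product decomposition.

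The first step is to rewrite D2 and S2 in the normal form
\begin{displaymath}
  \forall x\;\bigl[(\theta'(x) \conj x_1 \neq \ZERO) \longrightarrow \exists y\;(\phi'(x,y) \conj \cconj_i y_i \neq \ZERO)\bigr]
\end{displaymath}
required by lemma~\ref{le:non-sense-prod}, with $\theta'$ and $\phi'$ conjunctions of equations. The relation $b \ll a$ unfolds as the two equations $a - b = a$ and $a \join b = a$, so both D2 and S2 fit the pattern; the additional clause $b_1 \meet b_2 \meet (\UN - (\UN - a)) = \ZERO$ that distinguishes S2 from S1 is already equational. The distinguished variable $x_1$ will play the role of $a$ in both axioms, and the ``$y_i \neq \ZERO$'' conjuncts account for the non zero witness $b$ in D2 and the non zero witnesses $a_1,a_2$ in S2.

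Next I would verify the two hypotheses of lemma~\ref{le:non-sense-prod} with $\cC$ the class of finite algebras in $\cV_2$ whose greatest element is join irreducible. Hypothesis~1 is given by fact~\ref{fa:prod-V2}, which states precisely that every finite algebra of $\cV_2$ is isomorphic to a direct product of algebras in $\cC$. Hypothesis~2 is exactly the content of lemmas~\ref{le:D2} and~\ref{le:S2}: for any $L \in \cC$ and any tuple of elements of $L$ satisfying the corresponding $\theta'$, these lemmas build an extension of $L$ inside $\cV_2$ realising $\phi'$, and they explicitly produce non zero witnesses as soon as $a \neq \ZERO$.

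Finally, since $\cV_2$ has the finite model property (as noted after fact~\ref{fa:fmp}), the conclusion of lemma~\ref{le:non-sense-prod} applies and delivers both D2 and S2 in every algebra existentially closed in $\cV_2$. I do not expect any genuine obstacle here: all the real work has already been performed in the preceding lemmas. The only care required is bookkeeping, namely to check that the equational rewriting respects the exact input format of lemma~\ref{le:non-sense-prod} and that the distinguished variable $x_1$ is matched with $a$ in both axioms so that the ``$a \neq \ZERO$'' hypothesis in lemmas~\ref{le:D2} and~\ref{le:S2} lines up with the ``$x_1 \neq \ZERO$'' slot of the general lemma.
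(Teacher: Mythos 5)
Your proposal is correct and follows exactly the paper's own argument: the paper proves this theorem in one line by invoking fact~\ref{fa:prod-V2} and lemma~\ref{le:non-sense-prod} together with lemmas~\ref{le:D2} and~\ref{le:S2}. Your additional bookkeeping (rewriting $\ll$ as a pair of equations, matching $a$ with the distinguished variable $x_1$, and noting that the lemmas also cover the zero components arising in the product decomposition) is accurate and just makes explicit what the paper leaves implicit.
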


\begin{proof}
By fact~\ref{fa:prod-V2} and lemma~\ref{le:non-sense-prod} this
follows directly from lemmas~\ref{le:D2} and \ref{le:S2}.
\end{proof}

Here is a partial converse of theorem~\ref{th:EC-V2}.

\begin{theorem}\label{th:embed-V2}
  Let $L$ be an algebra in $\cV_2$ satisfying the density axiom D2 and
  the splitting axiom S2. Let $L_0$ be a finite subalgebra of $L$ and
  $L_1$ be a finite algebra in $\cV_2$ containing $L_0$. Then there
  exists an embedding of $L_1$ into $L$ which fixes every point of
  $L_0$. 
\end{theorem}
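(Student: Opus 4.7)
The plan is to imitate the argument of theorem~\ref{th:embed-V1}. An immediate induction based on proposition~\ref{pr:tour-primitive} reduces the problem to the case where $L_1$ is a primitive extension of $L_0$; since $\cV_2$ is a variety, hence closed under subalgebras, every intermediate extension in the tower remains in $\cV_2$, so we may assume $L_1\in\cV_2$ is itself a primitive extension of $L_0$ with signature $\sigma=(g,\{h_1,h_2\},r)$. By corollary~\ref{co:isom-sign} it suffices to exhibit a primitive tuple in $L$ of signature $\sigma$.

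The hypothesis $L_1\in\cV_2$, via fact~\ref{fa:prod-V2} applied to the maximal join-irreducibles of $L_1$, translates into the constraint that $h_1\meet h_2=\ZERO$ whenever $r=2$ and $g$ is maximal in $\jirr(L_0)$; no further restriction arises otherwise, because in the other cases the $\join$-irreducibles of $L_1$ replacing $g$ stay below the same maximal $\join$-irreducibles as in $L_0$. A direct computation inside $L_0$ shows that $\UN-(\UN-g)$ equals $g$ when $g$ is maximal in $\jirr(L_0)$ and equals $\ZERO$ otherwise, and this value is preserved in $L$ since $L_0$ is a subalgebra.

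For $r=2$ I apply S2 in $L$ to $(g,h_1,h_2)$: its splitting hypothesis $h_1\meet h_2\meet(\UN-(\UN-g))=\ZERO$ is automatic when $g$ is non-maximal (the factor vanishes) and reduces to the $\cV_2$ constraint $h_1\meet h_2=\ZERO$ when $g$ is maximal; the construction of the primitive tuple then transcribes verbatim from theorem~\ref{th:embed-V1}. For $r=1$ (so $h_1=h_2$) with $g$ non-maximal in $\jirr(L_0)$, the factor $\UN-(\UN-g)$ vanishes again, so applying S2 to $(g,g^-,h_1)$ is unconditional and the proof of theorem~\ref{th:embed-V1} transcribes verbatim.

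The delicate case is $r=1$ with $g$ maximal in $\jirr(L_0)$, where the naive application of S2 to $(g,g^-,h_1)$ would demand $h_1=\ZERO$ and may fail. The plan is to first construct $\alpha_1,\alpha_2\in L$ with $\alpha_1\join\alpha_2=g$, $\alpha_1\meet\alpha_2=\ZERO$, $\alpha_1\geq h_1$ and $g^-\meet\alpha_1=h_1$, by applying S2 to $(g,h_1,b_2)$ for a carefully chosen $b_2\in L$ disjoint from $h_1$ with $h_1\join b_2\geq g^-$; such a $b_2$ is produced by (possibly iterated) applications of S2 and D2 to $g^-$, whose splitting hypothesis is unconditional since $\UN-(\UN-g^-)=\ZERO$. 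Once $\alpha_1$ is at hand, D2 applied to $(\alpha_1,h_1)$ yields the desired $x$; its hypothesis $h_1\ll\alpha_1$ follows from the identity $\alpha_1-h_1=(g-\alpha_2)-h_1=(g-h_1)-\alpha_2=g-\alpha_2=\alpha_1$ (which uses $h_1\ll g$, itself a consequence of $h_1\leq g^-\ll g$), and the properties $g^-\meet x=h_1$ and $h_1\ll x\ll g$ are then immediate. Building $b_2$ in this maximal sub-case is the main obstacle of the proof.
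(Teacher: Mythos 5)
Your reduction to a single primitive extension, your treatment of $r=2$, and your treatment of $r=1$ when $g$ is not maximal in $\jirr(L_0)$ are correct and agree with the paper's proof (in particular the computation that $\UN-(\UN-g)$ equals $g$ for $g$ maximal, using fact~\ref{fa:prod-V2}, and equals $\ZERO$ otherwise, and the derivation of $h_1\meet h_2=\ZERO$ from $L_1\in\cV_2$ in the maximal $r=2$ case, which is exactly the paper's equation for Case~2). The genuine gap is the sub-case you yourself flag: $r=1$ with $g$ maximal and $h_1\neq\ZERO$. Your proposed repair cannot work: an element $b_2$ with $h_1\meet b_2=\ZERO$ and $h_1\join b_2\geq g^-$ does not exist in \emph{any} $\ltc$-extension of $L_0$, no matter how many times D2 and S2 are applied, as soon as $L_0$ contains a join-irreducible $w\leq g^-$ with $\ZERO\neq w\meet h_1$ and $w\not\leq h_1$. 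Indeed $w\leq h_1\join b_2$ gives $w-(w\meet h_1)\leq w\meet b_2$; since $w\meet h_1<w$ and $w$ is join-irreducible in $L_0$ we have $w\meet h_1\ll w$, so $w-(w\meet h_1)=w$ (this is computed in $L_0$, hence persists in the extension), whence $w\leq b_2$ and $h_1\meet b_2\geq h_1\meet w\neq\ZERO$. The four-element chain $\ZERO<h_1<g^-<g=\UN$ with $w=g^-$ already defeats the plan, and the signature $(g,\{h_1\},1)$ over that chain does yield an $L_1$ in $\cV_2$ (its unique maximal join-irreducible is $g$), so this sub-case cannot be avoided. The proposal is therefore incomplete, and its announced strategy for the missing case is not salvageable as stated.

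For comparison, the paper does not split Case~1 at all: it applies S2 directly to the triple $(g,g^-,h_1)$, justifying the side condition by the computation $\UN-g^-=\UN$, i.e.\ by checking $h_1\meet g^-\meet(\UN-(\UN-g^-))=\ZERO$. Note, however, that the hypothesis of S2 as stated involves $\UN-(\UN-a)$ for the element $a=g$ being split, and your own computation shows that $h_1\meet g^-\meet(\UN-(\UN-g))=h_1$ when $g$ is maximal. So you have put your finger on a real delicacy at exactly the point where the paper's argument is tersest; but neither your construction of $b_2$ nor a literal reading of the paper's Case~1 disposes of the sub-case $r=1$, $g$ maximal, $h_1\neq\ZERO$, and your proof remains gapped there.
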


\begin{proof}
By corollary~\ref{co:mini-prim} we can assume that $L_1$ is generated
over $L_0$ by a primitive tuple. Let $\sigma=(g,\{h_1,h_2\},r)$ be the
signature of $L_1$ in $L_0$. By corollary~\ref{co:isom-sign} it is
sufficient to prove that $\sigma$ is the signature of a primitive tuple of
elements $x_1, x_2 \in L$. 

{\it Case 1:} $r=1$ so $h_1=h_2$. Since $h_1 \leq g^- \ll g$ we have
$\UN-g^-=\UN$ hence obviously $h_1 \meet g^- \meet (\UN-(\UN-g^-))=\ZERO$. 
The splitting property $S2$ then applies to the elements $g,g^-,h_1$
in $L$. Then continue like in case~1 of the proof of
theorem~\ref{th:embed-V1}.

{\it Case 2:} $r=2$ so $h_1 \join h_2 = g^-$. If $\UN - g < \UN$ then 
$g$ is one of the join irreducible components of $\UN$ in $L_0$. By
theorem~\ref{th:ext-primitive} $x_1,x_2$ are then distinct join
irreducible components of $\UN$ in $L_1$, and since $L_1$ belongs to
$\cV_2$ it follows that $x_1 \meet x_2 = \ZERO$ and {\it a fortiori} $h_1 \meet
h_2 = \ZERO$. On the other hand if $\UN - g = \UN$ then obviously
$\UN-(\UN-g)=\ZERO$. So in any case we have: 
\begin{equation}\label{eq:sign-V2}
  h_1 \meet h_2 \meet (\UN - (\UN-g))=\ZERO 
\end{equation}
The splitting property S2 then applies in $L$ to the elements
$g,h_1,h_2$. Then continue like in case~2 of the proof of
theorem~\ref{th:embed-V1}.
\end{proof}

\begin{remark}\label{re:exist-sign-V2}
  The proof shows that the minimal extension of a finite co-Heyting
  algebra $L_0$ in $\cV_2$ determined by a signature $(g,\{h_1,h_2\},r)$
  belongs to $\cV_2$ if and only if either $r=1$, or $r=2$ and
  condition (\ref{eq:sign-V2}) holds. Also the analogues of
  corollaries~\ref{co:embed-V1-fini} and \ref{co:embed-V1-sat} hold
  for $\cV_2$ as a consequence of theorem~\ref{th:embed-V2}
\end{remark}

\section{Density and splitting in $\cV_3$}

We introduce the following axioms:
\begin{description}
  \item{\bf [Density D3]} 
    For every $a$ such that $a =\UN-(\UN-a) \neq \ZERO$ there exists a non zero
    element $b$ such that $b \ll a$.
  \item{\bf [Splitting S3]} 
    Same as S1.
\end{description}

A co-Heyting algebra $L$ belongs to $\cV_3$ if and only if it
has dimension $\leq 1$. If $L$ is finite this is equivalent to say that
every join irreducible element of $L$ is either maximal or minimal (or
both) in $\jirr(L)$. 

\begin{lemma}\label{le:D3}
  Let $a$ be any element of a finite algebra $L$ in $\cV_3$. If $a =
  \UN-(\UN-a) \neq \ZERO$ then there exists a finite algebra $L'$ in
  $\cV_3$ containing $L$ and a non zero element $b$ in $L'$ such that
  $b \ll a$.
\end{lemma}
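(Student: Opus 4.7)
The plan is to mimic the construction in lemma~\ref{le:D1}: insert a new minimal join-irreducible $\alpha_i$ below each join-irreducible component $a_i$ of $a$, apply proposition~\ref{pr:finite-injection} to embed $L$ into $L':=\dec(\cI)$, and take $b=\jjoin_{i=1}^{r}\alpha_i^\downarrow$. The new difficulty compared with lemma~\ref{le:D1} is that $L'$ must lie in $\cV_3$; this is precisely what the hypothesis $a=\UN-(\UN-a)$ will supply.

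The first step is to decode this hypothesis combinatorially. Under the folk identification $L\simeq\dec(\jirr(L))$ one has $\UN-x=\jjoin\{y\in\jirr(L)\tq y\nleq x\}$, and a short computation shows that $a=\UN-(\UN-a)$ is equivalent to requiring that every $y\in a^\downarrow\cap\jirr(L)$ lie below some $y'\in a^\downarrow\cap\jirr(L)$ all of whose join-irreducible majorants in $\jirr(L)$ are still $\leq a$. In $\cV_3$, where each join-irreducible of $L$ is either maximal or minimal in $\jirr(L)$, this forces every join-irreducible component $a_i$ of $a$ to be maximal in $\jirr(L)$: otherwise $a_i$ would be a minimal (non-maximal) join-irreducible that is maximal in $a^\downarrow\cap\jirr(L)$, and by the condition it would have to be dominated by some top contained in $a^\downarrow$, contradicting its maximality there.

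With this reduction in hand, the construction of $\cI$ repeats that of lemma~\ref{le:D1}: let $\cI=\jirr(L)\sqcup\{\alpha_1,\dots,\alpha_r\}$, with the $\alpha_i$'s pairwise incomparable and $\alpha_i<x$ (for $x\in\jirr(L)$) iff $a_i\leq x$, and define $\pi\colon\cI\to\jirr(L)$ by $\pi(\alpha_i)=a_i$ and $\pi|_{\jirr(L)}=\mathrm{id}$. Proposition~\ref{pr:finite-injection} then supplies an $\ltc$\--embedding $\varphi\colon L\hookrightarrow L':=\dec(\cI)$. Because each $a_i$ is already maximal in $\jirr(L)$, each new $\alpha_i$ is a minimal element of $\cI$, and no element of $\jirr(L)$ loses its maximal-or-minimal status in $\cI$; hence $\cI$ has height $\leq 1$ and $L'\in\cV_3$. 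The element $b=\jjoin_{i=1}^{r}\alpha_i^\downarrow$ is non-zero (as $r\geq 1$), and a straightforward calculation in decreasing subsets of $\cI$, exactly as at the end of the proof of lemma~\ref{le:D1}, yields $b\leq\varphi(a)$ and $\varphi(a)-b=\varphi(a)$, so $b\ll\varphi(a)$ as required.

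The main obstacle is the first step: converting the algebraic identity $a=\UN-(\UN-a)$ into the combinatorial statement that every join-irreducible component of $a$ is maximal in $\jirr(L)$. Without this, a non-maximal $a_i$ would sit strictly below some top $t\in\jirr(L)$, and the inserted $\alpha_i$ would produce the chain $\alpha_i<a_i<t$ in $\cI$, violating the dimension $\leq 1$ condition that defines $\cV_3$.
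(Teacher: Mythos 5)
Your reduction of the hypothesis $a=\UN-(\UN-a)\neq\ZERO$ to the statement that every join-irreducible component $a_i$ of $a$ is maximal in $\jirr(L)$ is correct and agrees with the paper. The gap is in the next step. In the construction of lemma~\ref{le:D1} the order on $\cI$ is defined by $x<\alpha_i\iff x<a_i$ for $x\in\jirr(L)$, so $\alpha_i$ is a minimal element of $\cI$ only when $a_i$ is \emph{minimal} in $\jirr(L)$, i.e.\ an atom of $L$; maximality of $a_i$ gives you no control over what lies \emph{below} $\alpha_i$. If some $a_i$ is a join-irreducible component of $\UN$ which is not an atom --- take $L=\Ltrois$ and $a=\UN$, which does satisfy $a=\UN-(\UN-a)\neq\ZERO$ --- then any $x\in\jirr(L)$ with $x<a_i$ produces the three-element chain $x<\alpha_i<a_i$ in $\cI$, so $\alpha_i$ is neither maximal nor minimal and $\dec(\cI)\notin\cV_3$ (for $L=\Ltrois$ your $L'$ is the four-element chain). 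Hence the sentence ``each new $\alpha_i$ is a minimal element of $\cI$'' fails precisely in the case your closing paragraph does not consider: you rule out chains going up from $\alpha_i$ but not chains coming into $\alpha_i$ from below.

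The paper closes this hole with a case split. If some $a_i$ is not an atom, then any join-irreducible $x<a_i$ already satisfies $\ZERO\neq x\ll a$ inside $L$ ($x\ll a_i$ because $a_i$ is join irreducible, and the other components of $a$ are not below $x$), so one takes $L'=L$ with no construction at all; only when every $a_i$ is both maximal and minimal does it run the construction of lemma~\ref{le:D1}, and in that situation your height argument is correct. Alternatively you could repair your uniform approach by changing the order on $\cI$ so that nothing of $\cI$ lies below $\alpha_i$ and $a_i$ is the only element above it, but then you must re-verify the hypothesis of proposition~\ref{pr:finite-injection} and redo the computation showing $b\ll\varphi(a)$, none of which appears in your text. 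As written, the proof does not establish that $L'$ belongs to $\cV_3$.
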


\begin{proof}
Let $a_1,\dots,a_r$ be the join irreducible components of $a$ in $L$. 
The assumption that $a=\UN-(\UN-a) \neq \ZERO$ means that $r \neq 0$ and all
the $a_i$'s are join irreducible components of $\UN$, that is maximal
elements in $\jirr(L_0)$. If there exists $i \leq r$ such that $a_i$ is
not in the same time minimal in $L$ (that is $a_i$ is not an atom of
$L$) then we can choose $b \in \jirr(L)$ such that $b<a_i$. Then $b$
is non zero and $b \ll a_i$ because $a_i$ is join irreducible, so {\it a
fortiori} $b \ll a$. The conclusion follows, with $L'=L$.

It only remains to deal with the case when all the $a_i$ are both
maximal and minimal in $\jirr(L)$. But in this case the construction
of lemma~\ref{le:D1} (with $c=\ZERO$) gives an extension $L'$ on $L$
such that: 
\begin{itemize}
  \item
   $\jirr(L')=\jirr(L)\cup\{x_1,\dots,x_r\}$.
 \item 
   For every $i \leq r$ and every $x \in \jirr(L)$, $x \not< x_i$ and:
   \begin{displaymath}
     x_i < x \iff x=a_i 
   \end{displaymath}
\end{itemize}
So there are still no chain in $\jirr(L')$ containing more than two
distinct join irreducible elements, that is $L'$ belongs to $\cV_2$,
and clearly: 
\begin{displaymath}
   \ZERO\neq x_1\join\cdots\join x_r \ll a 
\end{displaymath}
\end{proof}

\begin{lemma}\label{le:S3}
  Let $a,b_1,b_2$ be elements of a finite algebra $L$ in $\cV_3$.
  If $b_1 \join b_2 \ll a \neq \ZERO$ then there exists a finite 
  algebra $L'$ in $\cV_3$ containing $L$ and non zero elements $a_1$, $a_2$ in
  $L'$ such that:
  \begin{displaymath}
    \begin{array}{c}  
      a-a_2 = a_1 \geq b_1\\
      a-a_1 = a_2 \geq b_2\\
      a_1 \meet a_2 = b_1 \meet b_2 
    \end{array}
  \end{displaymath}
\end{lemma}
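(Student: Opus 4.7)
The plan is to reuse the construction from the proof of Lemma \ref{le:S1} verbatim, and simply verify that the resulting algebra $L'=\dec(\cI)$ stays in $\cV_3$. A finite co-Heyting algebra belongs to $\cV_3$ if and only if every chain in its poset of join-irreducibles has length at most two (every join irreducible is maximal or minimal), so the question reduces to a chain-length inspection in $\cI$.

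First, I would apply the construction of Lemma \ref{le:S1} to the data $a,b_1,b_2$: build the set $\cI$ of symbols $\xi_{x,0}$ (for $x\in\jirr(L)$ with $x\leq b_1\meet b_2$), $\xi_{x,1}$ (for $x\nleq b_2$) and $\xi_{x,2}$ (for $x\nleq b_1$), with the order
\begin{displaymath}
  \xi_{y,j}\leq\xi_{x,i}\iff y\leq x\text{ and }\{i,j\}\neq\{1,2\},
\end{displaymath}
obtain the $\ltc$\--embedding $\varphi:L\hookrightarrow\dec(\cI)$ by proposition~\ref{pr:finite-injection}, and set $a_1=\bigcup_i\xi_{a_i,1}^\downarrow$, $a_2=\bigcup_i\xi_{a_i,2}^\downarrow$ where $a_1,\dots,a_r$ are the join irreducible components of $a$. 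The verification in Lemma \ref{le:S1}, which only uses the hypotheses $b_1\join b_2\ll a\neq\ZERO$, yields the desired splitting equations and the non-vanishing of $a_1,a_2$.

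The only new step is to show $L'=\dec(\cI)\in\cV_3$. Since $\jirr(L')\cong\cI$ as ordered sets, it suffices to check that chains in $\cI$ have length at most two. Suppose $\xi_{y,j}<\xi_{x,i}$ is a strict inequality in $\cI$; then $y\leq x$ by definition, and I claim $y<x$ strictly. Indeed, if $y=x$ and $i\neq j$, the possibilities $\{i,j\}=\{0,1\}$ or $\{0,2\}$ are excluded because $\xi_{x,0}$ exists only when $x\leq b_1\meet b_2$, while $\xi_{x,1}$ (resp.\ $\xi_{x,2}$) requires $x\nleq b_2$ (resp.\ $x\nleq b_1$); the remaining possibility $\{i,j\}=\{1,2\}$ is explicitly excluded by the definition of the order. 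Hence any chain in $\cI$ projects via $\pi$ injectively to a chain of the same length in $\jirr(L)$. Because $L\in\cV_3$, the latter chain has length at most two, and we are done.

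No real obstacle arises; the proof is essentially a re-reading of le:S1 together with this order-theoretic observation. If anywhere a subtlety could hide, it would be the case distinction showing that $y=x$ is impossible in a strict inequality of $\cI$, which is why I have singled it out above.
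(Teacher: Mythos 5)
Your proposal is correct and is essentially the paper's own proof: the paper also disposes of this lemma by reusing the construction of Lemma~\ref{le:S1} verbatim and observing that the maximal length of chains of join irreducible elements in $L'$ does not exceed that in $L$, so membership in $\cV_3$ is preserved. Your explicit verification that a strict inequality $\xi_{y,j}<\xi_{x,i}$ in $\cI$ forces $y<x$ is exactly the detail the paper leaves implicit, and it is carried out correctly.
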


\begin{proof}
Same proof as for lemma~\ref{le:S1}. Indeed, in the extension $L'$ of
$L$ constructed in that proof the maximal length of the chains of join
irreducible elements is the same as in $L$. So if $L$ belongs to
$\cV_3$ then so does $L'$.
\end{proof}

\begin{theorem}\label{th:EC-V3}
  The theory of the variety $\cV_3$ has a model-completion which is
  axiomatized by the axioms of co-Heyting algebras augmented by the
  density and splitting axioms D3 and S3. 
\end{theorem}

\begin{proof}
As for theorem~\ref{th:EC-V1} it immediately follows from
lemmas~\ref{le:D3} and \ref{le:S3}, {\it via}
lemma~\ref{le:non-sense-fin}, that every algebra existentially closed in
$\cV_3$ satisfies the axioms D3 and S3. 

For the converse, by fact~\ref{fa:mod-comp} it is sufficient to show
that given an algebra $L$ in $\cV_3$ satisfying D3 and S3, a finitely
generated subalgebra $L_0$ and a finitely generated extension $L_1$ of
$L_0$ in $\cV_3$ there exists an embedding of $L_1$ in $L$ which fixes
$L_0$ pointwise. Since $\cV_3$ is locally finite, $L_0$ and $L_1$ are
finite and by corollary~\ref{co:mini-prim} we can assume that
$L_1$ is generated by a primitive tuple $(x_1,x_2)$. Let
$\sigma=(g,\{h_1,h_2\},r)$ be the signature of $L_1$ in $L$, numbered so that
$h_i=x_i \meet g^-$. By corollary~\ref{co:isom-sign} we have to find a
primitive tuple in $L$ having signature $\sigma$. 

{\it Case 1:} $r=1$ so $x_1=x_2$ and $h_1 \ll x_1 \ll g$. Since $x_1,g$
are join irreducible in $L_1$ and since $L_1$ belongs to $\cV_3$,
necessarily $g$ is a join irreducible component of $\UN$, $x_1$ is an
atom of $L_1$, and consequently $h_1=\ZERO$. The splitting axiom
S3 applied to $g,g^-,\ZERO$ gives non zero elements $y_1,y_2$ in $L$
such that: 
\begin{displaymath}
  \begin{array}{c}
    g-y_1=y_2 \geq g^- \\
    g-y_2=y_1  \\
    y_1 \meet y_2 = \ZERO
  \end{array}
\end{displaymath} 
By construction $(y_1,y_2)$ is a primitive tuple over $L_0$ hence by
remark~\ref{re:g-prim} and theorem~\ref{th:ext-primitive} we have:
\begin{displaymath}
  \jirr(L_0\gen{y_1})=\big(\jirr(L_0)\setminus\{g\}\big)\cup\{y_1,y_2\}
\end{displaymath}
Since $g$ was a join irreducible component of $\UN$ in $L_0$, the same
then holds for $y_1,y_2$ in $L_0\gen{y_1}$. It follows that
$\UN-(\UN-y_1)=y_1$ hence the density axiom D3 gives $x \in L \setminus \{\ZERO\}$
such that $x \ll y_1$. {\it A fortiori} $x \ll g$ and by construction $x \meet
g^- \leq y_1 \meet y_2 = \ZERO$. It easily follows that $(x,x)$ is a
primitive tuple with signature $(g,\{\ZERO\},1)=\sigma$ in $L_0$. 

{\it Case 2:} $r=2$ so $h_1 \join h_2 = g^-$. The same construction as in
the case 2 of the proof of theorem~\ref{th:embed-V1} applies here and
gives the conclusion.
\end{proof}

\section{Density and splitting in $\cV_4$}

We introduce the following axioms:
\begin{description}
  \item{\bf [Density D4]} 
    Same as D3.
  \item{\bf [Splitting S4]} 
    Same as S1 with the additional assumption that 
    $b_1 \meet b_2 \meet (\UN-a) =\ZERO$.
\end{description}

\begin{fact}\label{fa:V4-SP}
  For any finite co-Heyting algebra $L$ the following conditions are
  equivalent. 
  \begin{enumerate}
    \item\label{it:V4-belong}%
      $L$ belongs to $\cV_4$.
    \item\label{it:V4-3-comp}%
      $L$ belongs to $\cV_3$
      (every element of $\jirr(L)$ is either maximal or minimal) 
      and for any three distinct join irreducible components
      $x_1,x_2,x_3$ of $\UN$, we have $x_1\meet x_2 \meet x_3=\ZERO$. 
    \item\label{it:V4-product}%
      $L$ $\ltc$\--embeds in a product of
      finitely many copies of $\Lcinq$.
  \end{enumerate}
\end{fact}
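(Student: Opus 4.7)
The plan is to prove the cycle $(\ref{it:V4-product}) \Rightarrow (\ref{it:V4-belong}) \Rightarrow (\ref{it:V4-3-comp}) \Rightarrow (\ref{it:V4-product})$, so that the only substantive step is the last one, which I handle via Proposition~\ref{pr:finite-injection}.

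The implication $(\ref{it:V4-product}) \Rightarrow (\ref{it:V4-belong})$ is immediate: $\Lcinq \in \cV_4$ by definition, hence every finite power $\Lcinq^n$ is in $\cV_4$, and $\cV_4$ is closed under subalgebras. For $(\ref{it:V4-belong}) \Rightarrow (\ref{it:V4-3-comp})$, the inclusion $\cV_4 \subseteq \cV_3$ is built into the definition of $\cV_4$ given in Section~\ref{se:prereq}, so only the three-components condition needs work. Let $x_1, x_2, x_3$ be distinct join-irreducible components of $\UN$, and instantiate the defining axiom of $\cV_4$ at $x = x_1$, $y = x_2$. Since these components are pairwise incomparable maximal elements of $\jirr(L)$, join-irreducibility gives $x_i - x_j = x_i$ for $i \neq j$. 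Moreover $x_3 \leq \UN \leq x_2 \join (\UN - x_2)$ together with $x_3 \not\leq x_2$ and the join-irreducibility of $x_3$ yields $x_3 \leq \UN - x_2$; a second application with $\UN - x_2 \leq x_1 \join ((\UN - x_2) - x_1)$ and $x_3 \not\leq x_1$ gives $x_3 \leq (\UN - x_2) - x_1 \leq x_1 \bigtriangleup (\UN - x_2)$. The axiom of $\cV_4$ then forces $x_1 \meet x_2 \meet x_3 = \ZERO$.

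For the crux $(\ref{it:V4-3-comp}) \Rightarrow (\ref{it:V4-product})$, set $P = \jirr(L)$. By hypothesis every $p \in P$ is maximal or minimal; and for each minimal $m$, the set $M(m)$ of maximal elements strictly above $m$ has cardinality at most two (otherwise three distinct components of $\UN$ would lie above $m$, forcing $m \leq \ZERO$). I associate to each minimal $m \in P$ a fresh copy $\{c_m, a_m, b_m\}$ of the poset $\jirr(\Lcinq)$ (with $c_m < a_m$ and $c_m < b_m$, the latter two incomparable), and define $\pi$ on this copy by $\pi(c_m) = m$ together with a surjection of $\{a_m, b_m\}$ onto $M(m)$, or by $\pi(a_m) = \pi(b_m) = m$ in the degenerate case where $M(m) = \emptyset$ (which forces $m$ to be both minimal and maximal in $P$). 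Letting $\cI$ be the disjoint union of these copies, the resulting $\pi : \cI \to P$ is increasing and surjective, and the lifting hypothesis of Proposition~\ref{pr:finite-injection} follows from a short case analysis, since any lifting problem stays inside a single copy because the copies are order-disjoint. The proposition then produces an $\ltc$-embedding $L \hookrightarrow \dec(\cI)$, and since $\dec$ converts a disjoint union of posets into the direct product of their $\dec$'s, we obtain $\dec(\cI) \cong \Lcinq^n$ where $n$ is the number of minimal elements of $P$.

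The main obstacle will be the bookkeeping around the degenerate cases in the definition of $\pi$, namely minimal elements with only one maximum above, and join-irreducibles that are simultaneously minimal and maximal. In such cases two of the three vertices of the copy of $\jirr(\Lcinq)$ collapse onto the same element of $P$, and one has to check that the lifting property of Proposition~\ref{pr:finite-injection} survives this collapse. Everything else is routine given the tools developed earlier in the paper.
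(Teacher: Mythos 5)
Your proof is correct and follows essentially the same route as the paper's: the same instantiation of the defining equation of $\cV_4$ (at two distinct join irreducible components of $\UN$) for $(1)\Rightarrow(2)$, and for $(2)\Rightarrow(3)$ the same device of splitting each maximal join irreducible element according to the atoms below it (at most two maximals per atom) and invoking Proposition~\ref{pr:finite-injection}. The only cosmetic difference is that you pad each fibre out to a full copy of $\jirr(\Lcinq)$ so as to land directly in a power of $\Lcinq$, where the paper obtains a product of copies of $\Ldeux$, $\Ltrois$ and $\Lcinq$ and then embeds each factor into $\Lcinq$.
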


This is probably well known. For lack of a reference we give here
an elementary (and sketchy) proof. We can assume that $L\neq\Lun$
otherwise everything is trivial.

\begin{proof} 
(\ref{it:V4-product})$\Rightarrow$(\ref{it:V4-belong}) is clear.

(\ref{it:V4-belong})$\Rightarrow$(\ref{it:V4-3-comp})
Since $L$ belongs to $\cV_4$, which is generated by $\Lcinq$, which
belongs to $\cV_3$, obviously $L$ belongs to $\cV_3$. Now assume that
$\UN$ has at least three distinct join irreducible components
$x_1,x_2,x_3$ in $L$. The equation defining $\cV_4$ gives:
\begin{equation}\label{eq:V4-axiom}%
  (x_1-x_2)\meet(x_2-x_1)\meet(x_2 \bigtriangleup (\UN-x_1))=\ZERO
\end{equation}
We have $x_1-x_2=x_1$, $x_2-x_1=x_2$ and $\UN-x_1$ is the join of all
join irreducible components of $\UN$ except $x_1$. In particular it is
greater than $x_2$ and $x_3$ so we get:
\begin{displaymath}
  x_2 \bigtriangleup (\UN-x_1) = (\UN-x_1)-x_2 \geq x_3
\end{displaymath}
Finally (\ref{eq:V4-axiom}) becomes $x_1 \meet x_2 \meet
((\UN-x_1)-x_2)=\ZERO$ hence {\it a fortiori} $x_1 \meet x_2 \meet x_3 =
\ZERO$.

(\ref{it:V4-3-comp})$\Rightarrow$(\ref{it:V4-product}) 
We consider:
\begin{displaymath}
  \cI=\{(x_1,x_2)\in\jirr(L)\times\jirr(L)\tq 
            x_1<x_2\mbox{ or }x_1=x_2\mbox{ is an atom}\}
\end{displaymath}
$\cI$ is ordered as follows:
\begin{displaymath}
  (y_1,y_2)<(x_1,x_2) \iff y_1=y_2=x_1<x_2 
\end{displaymath}
The ordered set $\cI$ looks like $\jirr(L)$ except that every point of
$\jirr(L)$ strictly greater than $r$ atoms has been ``split'' in $r$
points strictly greater than only one atom. We ``collapse'' these $r$
points {\it via} the map $\pi$ defined for any $\xi=(x_1,x_2)\in\cI$ by
$\pi(\xi)=x_2$. This defines an $\ltc$\--embedding of $L$ into
$L'=\dec(\cI)$ by means of proposition~\ref{pr:finite-injection}. Then 
(\ref{it:V4-3-comp}) implies that $\cI$ is a finite disjoint union of
copies of sets represented in figure~\ref{fi:comp-V4}.
\begin{figure}[ht]
  \begin{center}
    \newcommand{\noir}[1]{\filldraw #1 circle (.1)}
    \begin{tikzpicture}[scale=.5]
      \draw (-6.5,1) -- (-5.5,0) -- (-4.5,1);
      \noir{(-6.5,1)};
      \noir{(-5.5,0)};
      \noir{(-4.5,1)};

      \draw (0,0) -- (0,1);
      \noir{(0,0)};
      \noir{(0,1)};

      \noir{(5,.5)};
    \end{tikzpicture}
    \caption{\label{fi:comp-V4}The connected components of $\cI$}
  \end{center}
\end{figure}
The family of all decreasing subsets of these sets are respectively
isomorphic to $\Lcinq$, $\Ltrois$ and $\Ldeux$. Since $\jirr(L')$ is
order-isomorphic to $\cI$, it follows that $L'$ is a direct product of
finitely many copies of these three algebras. Each of these copies
obviously $\ltc$\--embeds into $\Lcinq$ so we are done. 
\end{proof}

\begin{lemma}\label{le:D4}
  Let $a$ be any element of $\Lcinq$ such that $a = \UN-(\UN-a)$. 
  Then there exists an element $b$ in $\Lcinq$ such that
  $b \ll a$. If moreover $a\neq\ZERO$ then $b$ can be chosen non zero.
\end{lemma}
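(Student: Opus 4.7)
The plan is to do a direct finite check on the five-element algebra $\Lcinq$. I would first spell out its structure: $\Lcinq$ has a bottom $\ZERO$, a unique atom $\alpha$, two incomparable elements $p_1,p_2$ with $p_1\meet p_2=\alpha$ and $p_1\join p_2=\UN$. So $\jirr(\Lcinq)=\{\alpha,p_1,p_2\}$, and $p_1,p_2$ are precisely the two join irreducible components of $\UN$.

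Next I would compute the map $x\mapsto\UN-x$ on $\Lcinq$: the key observation is that $\alpha\join c<\UN$ for every $c<\UN$, which forces $\UN-\alpha=\UN$ (and of course $\UN-\ZERO=\UN$, $\UN-p_1=p_2$, $\UN-p_2=p_1$, $\UN-\UN=\ZERO$). Iterating this table, the elements $a\in\Lcinq$ satisfying the hypothesis $a=\UN-(\UN-a)$ turn out to be exactly $\ZERO,p_1,p_2,\UN$; only the atom $\alpha$ fails, since $\UN-(\UN-\alpha)=\UN-\UN=\ZERO\neq\alpha$.

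It remains to exhibit $b$ in each case. For $a=\ZERO$ take $b=\ZERO$ and the conclusion $b\ll a$ is immediate. For $a\in\{p_1,p_2,\UN\}$ take $b=\alpha$: we have $\alpha\leq a$, and $a-\alpha=a$ since no join irreducible component of $a$ (each being $p_1$ or $p_2$) is $\leq\alpha$. Hence $\alpha\ll a$ with $\alpha$ non zero, as required. The whole argument is really just a table lookup on a five-element lattice; the only point that requires a moment of care is the orientation of $\ll$, where $b\ll a$ means $a-b=a\geq b$, so $b=\alpha$ (sitting below $a$ but ``swallowed'' by the subtraction) is indeed the correct choice.
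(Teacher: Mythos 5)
Your proof is correct and follows essentially the same route as the paper: a direct case check on the five-element algebra $\Lcinq$, identifying the elements satisfying $a=\UN-(\UN-a)$ and exhibiting $b$ explicitly (for $a=p_1,p_2$ your choice $b=\alpha$ coincides with the paper's $b=a^-$). In fact your enumeration is slightly more careful than the paper's, which asserts that a nonzero solution of $a=\UN-(\UN-a)$ must be one of the two maximal join irreducible elements and thereby overlooks $a=\UN$ (where indeed $\UN-(\UN-\UN)=\UN-\ZERO=\UN$); your uniform choice $b=\alpha$ covers that case as well.
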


\begin{proof}
The assumption that $\UN-(\UN-a)=a\neq\ZERO$ implies that $a$ is not the
unique atom $c$ of $\Lcinq$. If $a=\ZERO$ one can take $b=\ZERO$.
Otherwise one can take $b=c$. 
\end{proof}

\begin{lemma}\label{le:S4}
  Let $a,b_1,b_2$ be any elements of $\Lcinq$ such that
  $b_1 \join b_2 \ll a$ and $b_1 \meet b_2 \meet (\UN-a) =\ZERO$. Then there
  exists an extension $L'$ of $L$ in $\cV_4$ and 
  elements $a_1$, $a_2$ in $L'$ such that:
  \begin{displaymath}
    \begin{array}{c}  
      a-a_2 = a_1 \geq b_1\\
      a-a_1 = a_2 \geq b_2\\
      a_1 \meet a_2 = b_1 \meet b_2 
    \end{array}
  \end{displaymath}
  If moreover $a\neq\ZERO$ then $a_1,a_2$ can be chosen both non zero.
\end{lemma}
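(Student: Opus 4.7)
The plan is to adapt the construction of lemma~\ref{le:S1} to the case $L=\Lcinq$ and verify that the resulting extension lies in $\cV_4$, handling one exceptional configuration by a direct choice inside $\Lcinq$ itself. Write $\Lcinq=\{\ZERO,c,u,v,\UN\}$ with $c$ the unique atom, $u,v$ the two coatoms, $u\meet v=c$ and $u\join v=\UN$; thus $\jirr(\Lcinq)=\{c,u,v\}$ with $c$ sitting strictly below both $u$ and $v$.

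A brief case-by-case inspection over the elements of $\Lcinq$ shows that, for $a\neq\ZERO$, the hypothesis $b_1\join b_2\ll a$ forces $b_1\join b_2\leq c$, so $b_1,b_2\in\{\ZERO,c\}$. The second hypothesis $b_1\meet b_2\meet(\UN-a)=\ZERO$ then implies that $b_1=b_2=c$ can occur only when $a=\UN$ (since for $a\in\{c,u,v\}$ one has $\UN-a\geq c$, while $\UN-\UN=\ZERO$). I therefore split into three cases: (i) if $a=\ZERO$, take $a_1=a_2=\ZERO$ in $L'=\Lcinq$; (ii) if $a=\UN$ and $b_1=b_2=c$, take $L'=\Lcinq$ and $(a_1,a_2)=(u,v)$ -- then $u\join v=\UN$, $u\meet v=c=b_1\meet b_2$, $\UN-u=v$, $\UN-v=u$ verify all identities and $u,v$ are non-zero; (iii) the remaining case $a\neq\ZERO$ and $b_1\meet b_2=\ZERO$.

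In case (iii), I would run the explicit construction from the proof of lemma~\ref{le:S1} applied to $\Lcinq$, which directly produces an $\ltc$-embedding $\varphi\colon\Lcinq\hookrightarrow\dec(\cI)$ together with non-zero elements $\alpha_1,\alpha_2\in\dec(\cI)$ witnessing the splitting identities for $\varphi(a),\varphi(b_1),\varphi(b_2)$. What remains is to check $\dec(\cI)\in\cV_4$, which I verify via Fact~\ref{fa:V4-SP}(\ref{it:V4-3-comp}). The key observation is that, since $b_1\meet b_2=\ZERO$, no ``layer $0$'' symbol $\xi_{x,0}$ is introduced at all. Consequently the ``layer $1$'' and ``layer $2$'' subsets of $\cI$ are disjoint as subposets (no cross-layer comparabilities), and within each layer the order is that inherited from the corresponding subset of $\jirr(\Lcinq)$. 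Since no chain in $\jirr(\Lcinq)$ contains three distinct elements, the same holds in $\cI$, so every element of $\cI$ is maximal or minimal and $\dec(\cI)\in\cV_3$. Moreover, each layer contains at most the two maxima $\xi_{u,i},\xi_{v,i}$, so any three distinct maximal elements of $\cI$ must straddle both layers; the downward closures of elements in different layers are disjoint, forcing the triple meet to be $\ZERO$ in $\dec(\cI)$. This verifies (\ref{it:V4-3-comp}) and places $\dec(\cI)$ in $\cV_4$.

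The main obstacle is identifying and isolating the exceptional configuration of case~(ii): there the lemma~\ref{le:S1} construction genuinely fails -- it would introduce a symbol $\xi_{c,0}$ sitting below four pairwise-incomparable maximal symbols of $\cI$, and the resulting triple meet $\xi_{c,0}^\downarrow$ would be non-zero, violating the $\cV_4$-axiom. Once this single case is peeled off and treated directly by the choice $(u,v)$ in $\Lcinq$, the $\cV_4$-verification collapses to the clean combinatorial observation above.
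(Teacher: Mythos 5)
Your proof is correct, and for the bulk of the cases it takes a genuinely different route from the paper's. Both arguments begin identically: the observation that $b_1\join b_2\ll a$ forces $b_1\join b_2\in\{\ZERO,c\}$, the trivial case $a=\ZERO$, and the isolation of the single configuration $b_1=b_2=c$ (which, as you note, forces $a=\UN$), solved inside $\Lcinq$ itself by taking for $a_1,a_2$ the two coatoms. Where you diverge is in the remaining cases: the paper treats them by an explicit enumeration (its Cases 3 through 7), exhibiting for each one a concrete small extension of $\Lcinq$ in a figure (a four-irreducible lattice for $a=c$, and $\Lcinq\times\Ltrois$ for the cases $a>c$) and checking membership in $\cV_4$ by inspection via its fact on join irreducible components. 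You instead reuse the generic glueing construction of lemma~\ref{le:S1} uniformly and prove that, because $b_1\meet b_2=\ZERO$ suppresses all layer-$0$ symbols $\xi_{x,0}$, the poset $\cI$ splits into two order-disjoint layers each carrying at most two maximal elements, whence $\dec(\cI)$ satisfies condition~(\ref{it:V4-3-comp}) of fact~\ref{fa:V4-SP}. This is the same strategy the paper itself uses for lemma~\ref{le:S3} (``the $S1$ construction preserves $\cV_3$''), pushed one step further to $\cV_4$; it buys uniformity and dispenses with the figures, at the cost of producing a slightly larger witness extension (e.g.\ $\Lcinq\times\Lcinq$ where the paper uses a six-element lattice). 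Your diagnosis of why the $S1$ construction must fail in the exceptional case --- the symbol $\xi_{c,0}$ would lie below four pairwise incomparable maxima, giving a non-zero triple meet --- is exactly right and explains why that case has to be peeled off first.
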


\begin{proof}
Let $c$ denote the unique atom of $\Lcinq$. The first assumption on
$a,b_1,b_2$ implies that $b_1 \join b_2$ is either $\ZERO$ or $c$. In
particular we can always assume that $b_2 \leq b_1$. 

{\it Case 1:} $a=\ZERO$. One can take $a_1=a_2=\ZERO$.

{\it Case 2:} $b_1=b_2 =c$. By assumption $c \ll a$ and $c \meet (\UN
-a)=\ZERO$ hence $a=\UN$. So we can take $L'=\Lcinq$ and for $a_1,a_2$
the join irreducible components of $\UN$.

\begin{figure}[ht]
  \begin{center}
    \begin{tikzpicture}[scale=.5]
      \begin{scope}[every node/.style={draw,shape=circle,fill=white,
                     inner sep=0mm, minimum size=1mm}]
        \draw (0,0) node(O){} -- (-1,1) node(A1)[fill=black]{} 
          -- (0,2) node(A){} -- (-1,3) node{} -- (0,4) node{} 
          -- (1,3) node{} -- (0,2) 
          -- (1,1) node(A2)[fill=black]{} 
          -- (0,0);
      \end{scope}
      \draw (O) node[below]{$b_1=b_2=\ZERO$}
        (A1) node[left]{$a_1$}
        (A2) node[right]{$a_2$}
        (A) node[xshift=2mm,right]{$a$};

    \end{tikzpicture}
    \caption{\label{fi:V4-huit}Case 3}
  \end{center}
\end{figure}

{\it Case 3:} $a=c$. Then $b_1=b_2=\ZERO$ and we can take for $a_1$,
$a_2$ the atoms of the extension $L'$ of $\Lcinq$ shown in
figure~\ref{fi:V4-huit} (the white points are the points of $\Lcinq$).
Note that the four join irreducible elements of $L'=\Lcinq\times\Ldeux$
belongs to $\cV_4$.

\begin{figure}[ht]
  \begin{center}

    \begin{tikzpicture}[scale=.5]

      \dessin

      \draw (0,0) node[below]{$b_1=b_2=\ZERO$}
        (-1,4) node[above]{$a=\UN$}
        (-2,3) node[left]{$a_1$}
        (1,1) node[right]{$a_2$};

      \begin{scope}[xshift=10cm]

        \dessin

        \draw (0,0) node[below]{$b_2=\ZERO$}
          (-1,1) node[below left]{$b_1$}
          (-1,4) node[above]{$a=\UN$}
          (-2,3) node[left]{$a_1$}
          (1,1) node[right]{$a_2$};

        \end{scope}

    \end{tikzpicture}
  \end{center}
 \caption{\label{fi:V4-cas-45}Cases 4 and 5}
\end{figure}

\begin{figure}[ht]
  \begin{center}

    \begin{tikzpicture}[scale=.5]

      \dessin

      \draw (0,0) node[below]{$b_1=b_2=\ZERO$}
        (0,3) node[right]{$a$}
        (1,1) node[right]{$a_2$};

      \draw[->,densely dotted] (-3,.7) node[left]{$a_1$} to[bend right] (-1.1,1.9);

      \begin{scope}[xshift=10cm]

        \dessin

        \draw[->,densely dotted] (-3,.7) node[left]{$a_1$} to[bend right] (-1.1,1.9);

        \draw (0,0) node[below]{$b_2=\ZERO$}
          (-1,1) node[below left,xshift=1mm]{$b_1$}
          (0,3) node[right]{$a$}
          (1,1) node[right]{$a_2$};

      \end{scope}

    \end{tikzpicture}
    \caption{\label{fi:V4-cas-67}Cases 6 and 7}
  \end{center}
\end{figure}

{\it Cases 4 to 7:} The four remaining cases when $a>c$ are summarised
in figures~\ref{fi:V4-cas-45} and \ref{fi:V4-cas-67}. In each case the
white points represent the points of $\Lcinq$ and one can take for
$a_1$, $a_2$ the points in the extension $L'$ of $\Lcinq$ shown in the
figures. Note that $L'$ is just $\Lcinq\times\Ltrois$ so it belongs to
$\cV_4$.
\end{proof}

\begin{theorem}\label{th:EC-V4}
  The theory of the variety $\cV_4$ has a model-completion which is
  axiomatized by the axioms of co-Heyting algebras augmented by the
  density and splitting axioms D4 and S4. 
\end{theorem}

\begin{proof}
As for theorem~\ref{th:EC-V3}, the only thing which it remains to
prove after lemmas~\ref{le:D4} and \ref{le:S4} is that: given an
algebra $L$ in $\cV_4$ satisfying D4 and S4, a finitely generated
subalgebra $L_0$ and a finitely generated extension $L_1$ of $L_0$ in
$\cV_4$ generated by a primitive tuple $(x_1,x_2)$ with signature
$\sigma=(g,\{h_1,h_2\},r)$ in $L_0$ (numbered so that $h_i=x_i \meet g^-$), there
exists a primitive tuple in $L$ having the same signature $\sigma$. 

{\it Case 1:} $r=1$. The same argument as in the case~1 in the proof
of theorem~\ref{th:EC-V3} applies here (when applying S4 in place of
S3 to $g,g^-,\ZERO$ the additional condition $g^-\meet\ZERO\meet(\UN-g)=\ZERO$ is
obviously satisfied).

{\it Case 2:} $r=2$ so $h_1 \join h_2 = g^-$. In order to apply the
splitting axiom S4 to $g,h_1,h_2$ we have to check that $h_1\meet h_2\meet
(\UN-g)=\ZERO$. Assume the contrary. Then $h_1$, $h_2$ are 
non zero so $g$ is not an atom. Since $L_0$ belongs to $\cV_4\subseteq\cV_3$ it
follows that $g$ is maximal in $\jirr(L_0)$ hence so are $x_1$, $x_2$
in $\jirr(L_1)$ (see theorem~\ref{th:ext-primitive}). With other
words $x_1$, $x_2$ are two distinct join irreducible components of
$\jirr(L_1)$ and $\UN-g$ is the join of all the other join irreducible
components of $\UN$ in $L_1$. But for any such component $x_3$ we must
have $x_1 \meet x_2 \meet x_3=\ZERO$ by fact~\ref{fa:V4-SP} so $x_1\meet x_2 \meet
(\UN-g)=\ZERO$. Since each $h_i \leq x_i$ this contradicts our
assumption. 

So we can apply S4 to $g,h_1,h_2$ and it gives $y_1,y_2$ in $L$. Then
finish like in the case~2 of the proof of theorem~\ref{th:embed-V1}.
\end{proof}

\section{Density and splitting in $\cV_5$}

The density and splitting axioms for $\cV_5$ are respectively D5$=$D3
and S5$=$S2. 

\begin{fact}\label{fa:V5-caract}  
  For a finite co-Heyting algebra $L$ the following conditions are
  equivalent:
  \begin{enumerate}
    \item
      $L$ belongs to $\cV_5$.
    \item 
      $L$ belongs to $\cV_2$ and $\cV_3$, that is every join
      irreducible element of $L$ which is not an atom is a join
      irreducible component of $\UN$, and for any two distinct join
      irreducible components $x_1$, $x_2$ of $\UN$ we have $x_1 \meet
      x_2=\ZERO$.
    \item 
      $L$ $\ltc$\--embeds in a direct product of finitely many copies
      of the three elements co-Heyting algebra $\Ltrois$. 
  \end{enumerate}
\end{fact}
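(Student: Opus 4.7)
My plan is to imitate the structure of the proof of fact~\ref{fa:V4-SP}: establish the easy implication (3)~$\Rightarrow$~(1), derive (1)~$\Leftrightarrow$~(2) from the definitions together with fact~\ref{fa:prod-V2}, and then construct an explicit embedding for (2)~$\Rightarrow$~(3) via proposition~\ref{pr:finite-injection}. One can assume $L \neq \Lun$ throughout, as that case is trivial.

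For (3)~$\Rightarrow$~(1), I would simply note that $\Ltrois$ belongs to $\cV_5$ (the variety $\cV_5$ is generated by $\Ltrois$), and that any variety is closed under subalgebras and finite products. For (1)~$\Leftrightarrow$~(2), the key observation is that the extra axiom in $\cV_5 = \cV_2 + [(((\UN-x)\meet x)-y)\meet y=\ZERO]$ is exactly the one defining $\cV_3$ over $\cV_1$, so $\cV_5 = \cV_2 \cap \cV_3$. The $\cV_2$ part rephrases, via fact~\ref{fa:prod-V2}, as the pairwise disjointness of the join irreducible components of $\UN$, while the $\cV_3$ part (dimension $\leq 1$) rephrases as: every element of $\jirr(L)$ is either minimal (an atom of $L$) or maximal (a join irreducible component of $\UN$) in $\jirr(L)$. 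These two characterizations together are exactly condition (2).

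The main work is in (2)~$\Rightarrow$~(3). I would first apply fact~\ref{fa:prod-V2} to decompose $L$ as a finite product $L_1 \times \cdots \times L_r$ where each $L_i$ has $\UN_{L_i}$ join irreducible; since each $L_i$ is a quotient of $L$, it still satisfies the $\cV_3$ axiom, so $\jirr(L_i) = \{\UN_{L_i}\} \cup A_i$ for some (possibly empty) set $A_i$ of atoms. The task then reduces to embedding each $L_i$ into some power of $\Ltrois$. When $A_i = \emptyset$ one has $L_i = \Ldeux$ and the embedding into $\Ltrois$ is immediate. Otherwise, with $k = |A_i|$, I would take $\cI$ to be the disjoint union of $k$ two-element chains $*_j < **_j$ (so that $\dec(\cI) \cong \Ltrois^k$), and define $\pi : \cI \to \jirr(L_i)$ by sending each $*_j$ to the $j$\--th atom of $L_i$ and each $**_j$ to $\UN_{L_i}$. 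Checking the hypothesis of proposition~\ref{pr:finite-injection} is straightforward: given $\zeta \in \{*_j, **_j\}$ and $x \in \jirr(L_i)$ above $\pi(\zeta)$, the lift $\xi = **_j$ if $x = \UN_{L_i}$, and $\xi = *_j$ if $x$ is the $j$\--th atom, does the job. The proposition then produces the desired $\ltc$\--embedding $L_i \hookrightarrow \dec(\cI) \cong \Ltrois^k$, and the product of these embeddings realises $L$ as a subalgebra of a finite power of $\Ltrois$. I do not expect any real obstacle here; the only mildly non-trivial step is choosing the right combinatorial set $\cI$, which is forced by the structure of $\jirr(L_i)$.
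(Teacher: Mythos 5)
Your proof is correct and is essentially the adaptation of the proof of fact~\ref{fa:V4-SP} that the paper has in mind (the paper itself omits the details, stating only that the adaptation is straightforward). The one cosmetic difference is that you first decompose $L$ via fact~\ref{fa:prod-V2} and then split the join irreducible top of each factor over its atoms, whereas the proof of fact~\ref{fa:V4-SP} performs the analogous splitting on all of $\jirr(L)$ at once; both routes come down to the same application of proposition~\ref{pr:finite-injection}.
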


This is probably well known, and anyway the adaptation to this context
of the proof that we gave for the analogous fact~\ref{fa:V4-SP} is
straightforward. 

\begin{theorem}\label{th:EC-V5}
  The theory of the variety $\cV_5$ has a model-completion which is
  axiomatized by the axioms of co-Heyting algebras augmented by the
  density and splitting axioms D5 and S5. 
\end{theorem}

\begin{proof}
Let $c$ denote the unique atom of $\Ltrois$. 

The only elements $a$ in $\Ltrois$ such that $a =\UN-(\UN-a)$ are
$\ZERO$ and $\UN$. Clearly if $a=\ZERO$ then $b=\ZERO$ satisfies $b \ll
a$, and otherwise $b=c$ satisfies $\ZERO\neq b \ll a$. 
By fact~\ref{fa:V5-caract} and lemma~\ref{le:non-sense-prod} it
follows that every algebra existentially closed in $\cV_5$ satisfies 
D5.

Now let $a,b_1,b_2$ in $L_3$ be such that $b_1 \join b_2 \ll a$ and $b_1 \meet
b_2 \meet (\UN-(\UN-a))=\ZERO$. If $a=\ZERO$ then one can take
$a_1=a_2=\ZERO$ as a solution for the conclusion of S5. Otherwise
three cases may happen: 

{\it Case 1:} $a=c$ and $b_1=b_2=\ZERO$.
             
{\it Case 2:} $a=\UN$ and $b_1=b_2=\ZERO$. 
             
{\it Case 3:} $a=\UN$ and (renaming $b_1$ and $b_2$ if necessary)
$b_1=c$ and $b_2=\ZERO$. 

In each of these cases one can take for $a_1$, $a_2$
the elements of the extension $\Lneuf$ of $\Ltrois$ shown in
figure~\ref{fi:V5-embed-L3} (the white points represent $\Ltrois$).
Note that $\Lneuf=\Ltrois\times\Ltrois$ belongs to $\cV_5$. By
fact~\ref{fa:V5-caract} and lemma~\ref{le:non-sense-prod} again, it
follows that every algebra existentially closed in $\cV_5$ satisfies
S5.

\begin{figure}[ht]
  \begin{center}
    
    \begin{tikzpicture}[scale=.5]
      
      \begin{scope}[every node/.style={draw,shape=circle,fill=black,
               inner sep=0mm, minimum size=1mm}]
        \foreach \k in {0,1,2}
          { \draw (\k,\k-2) -- ++(-2,2);
            \draw (-\k,\k-2) node{} -- ++(1,1) node{} -- ++(1,1) node{}; }
      \end{scope}
      \begin{scope}[every node/.style={draw,shape=circle,fill=white,
               inner sep=0mm, minimum size=1mm}]
        \draw (0,-2) node(O){} (0,0) node(a){} (0,2) node(UN){};
      \end{scope}
      \draw (-1,-1) node[below left]{$a_1$}
        (1,-1) node[below right]{$a_2$}
        (a) node[right]{$a$}
        (O) node[below]{$b_1=b_2=\ZERO$};

      \begin{scope}[xshift=7cm]
        \begin{scope}[every node/.style={draw,shape=circle,fill=black,
                 inner sep=0mm, minimum size=1mm}]
          \foreach \k in {0,1,2}
            { \draw (\k,\k-2) -- ++(-2,2);
              \draw (-\k,\k-2) node{} -- ++(1,1) node{} -- ++(1,1) node{}; }
        \end{scope}
        \begin{scope}[every node/.style={draw,shape=circle,fill=white,
                 inner sep=0mm, minimum size=1mm}]
                 \draw (0,-2) node(O){} (0,0) node{} (0,2) node(UN){};
        \end{scope}
        \draw (-2,0) node[left]{$a_1$}
          (2,0) node[right]{$a_2$}
          (UN) node[above]{$a=\UN$}
          (O) node[below]{$b_1=b_2=\ZERO$};
      \end{scope}

      \begin{scope}[xshift=14cm]
        \begin{scope}[every node/.style={draw,shape=circle,fill=black,
                 inner sep=0mm, minimum size=1mm}]
          \foreach \k in {0,1,2}
            { \draw (\k,\k-2) -- ++(-2,2);
              \draw (-\k,\k-2) node{} -- ++(1,1) node{} -- ++(1,1) node{}; }
        \end{scope}
        \begin{scope}[every node/.style={draw,shape=circle,fill=white,
                 inner sep=0mm, minimum size=1mm}]
          \draw (0,-2) node(O){} (-1,-1) node(b){} (0,2) node(UN){};
        \end{scope}
        \draw (UN) node[above]{$a=\UN$}
          (-1,-1) node[below left]{$b_1$}
          (-2,0) node[left]{$a_1$}
          (2,0) node[right]{$a_2$}
          (O) node[below]{$b_2=\ZERO$};
      \end{scope}

    \end{tikzpicture}

    \caption{\label{fi:V5-embed-L3}$\Ltrois\subset\Lneuf$ gives solutions for S5 (three possible cases).}
  \end{center}
\end{figure}

Conversely let $L$ in $\cV_5$ satisfying D5 and S5, $L_0$ a finitely
generated subalgebra and $L_1$ a finitely generated extension of $L_0$
in $\cV_5$ generated by a primitive tuple $(x_1,x_2)$ with signature
$\sigma=(g,\{h_1,h_2\},r)$ in $L_0$ (numbered so that $h_i=x_i \meet g^-$). As
usually it only remains to find a primitive tuple in $L$ having the
same signature $\sigma$ in order to conclude that $L_1$ embeds into $L$
over $L_0$ by corollary~\ref{co:isom-sign}, hence to finish the proof
by fact~\ref{fa:mod-comp}. 

{\it Case 1:} $r=1$ so $x_1=x_2$ and $h_1 \ll x_1 \ll g$. Same as 
case~1 in the proof of theorem~\ref{th:EC-V3}.

{\it Case 2:} $r=2$ so $h_1 \join h_2 = g^-$. Same as case~2 in the proof
of theorem~\ref{th:EC-V2} (note that $\cV_5$ is contained in $\cV_2$
when applying this proof).
\end{proof}

\section{Density and splitting in $\cV_6$}%
\label{se:V6}%

We introduce our last axioms.
\begin{description}
  \item{\bf [Density D6]} 
    Same as D1.
  \item{\bf [Splitting S6]}
    Same as S1 with the additional assumption that $b_1 \meet b_2=\ZERO$.
\end{description}

\begin{fact}\label{fa:V6-caract}
  A finite co-Heyting algebra belongs to $\cV_6$ if and only if it
  embeds into a direct product of finitely many finite chains.
\end{fact}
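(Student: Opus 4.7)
The plan is to prove both directions separately, with the nontrivial content in the ``only if'' direction via proposition~\ref{pr:finite-injection}.

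For the easy direction, every finite chain $C$ satisfies the defining equation of $\cV_6$, since for any $x,y\in C$ either $x\leq y$ (so $x-y=\ZERO$) or $y\leq x$ (so $y-x=\ZERO$); hence any finite product of chains, as well as any $\ltc$-subalgebra of such a product, lies in $\cV_6$.

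For the converse, let $L$ be a finite co-Heyting algebra in $\cV_6$. My first step is to analyse the poset $\jirr(L)$ using the defining equation. If $a,b\in\jirr(L)$ are incomparable then, using the fact that for a join irreducible $a$ one has $a-b=a$ whenever $a\nleq b$ (and similarly for $b$), the equation $(a-b)\meet(b-a)=\ZERO$ forces $a\meet b=\ZERO$. In particular no element of $\jirr(L)$ lies below both $a$ and $b$. Equivalently, for every $c\in\jirr(L)$ the set $c^\uparrow\cap\jirr(L)$ is totally ordered: two incomparable join irreducibles above $c$ would have $c$ as a common join-irreducible lower bound.

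Now let $\ell_1,\dots,\ell_r$ enumerate the minimal elements of $\jirr(L)$, and for each $i$ set $C_i=\ell_i^\uparrow\cap\jirr(L)$, which by the previous step is a finite chain. Form the disjoint union $\cI=\bigsqcup_{i=1}^r C_i$, with the order inherited componentwise (elements in distinct $C_i$'s are incomparable), and define $\pi:\cI\to\jirr(L)$ to send each copy of an element back to itself. Then $\pi$ is increasing and surjective (every join irreducible has some minimal join irreducible below it), and it satisfies the lifting condition of proposition~\ref{pr:finite-injection}: given $\zeta\in C_i$ and $x\in\jirr(L)$ with $\pi(\zeta)\leq x$, one has $\ell_i\leq\pi(\zeta)\leq x$, so $x$ belongs to the chain $C_i$, and the corresponding copy $\xi$ of $x$ in $C_i$ lies above $\zeta$ with $\pi(\xi)=x$.

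Proposition~\ref{pr:finite-injection} therefore supplies an $\ltc$-embedding of $L$ into $\dec(\cI)$; and since $\cI$ is a disjoint union of the chains $C_i$, one has $\dec(\cI)\cong\prod_{i=1}^r\dec(C_i)$, each factor being the decreasing-subset lattice of a finite chain and hence itself a finite chain. This realises $L$ as a subalgebra of a finite product of finite chains, as required. The only place where care is needed is in pinning down the structural consequence of the defining equation in step one; once $\jirr(L)$ is understood as a disjoint union of ``upward chains'' from its minimal elements, the construction of $\cI$ and the appeal to proposition~\ref{pr:finite-injection} are routine.
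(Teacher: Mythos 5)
Your proof is correct. The paper itself gives no argument for this fact (it is dismissed with ``certainly well known, and easy to check''), so there is nothing to compare it against directly; but your route is exactly in the spirit of the paper's own proof of the analogous fact~\ref{fa:V4-SP} for $\cV_4$, namely: extract the order-theoretic consequence of the defining equation on $\jirr(L)$ (here, incomparable join irreducibles meet to $\ZERO$, hence the up-set of each join irreducible is a chain), then split $\jirr(L)$ into overlapping chains, take their disjoint union $\cI$, and invoke proposition~\ref{pr:finite-injection} to embed $L$ into $\dec(\cI)\cong\prod_i\dec(C_i)$. All the steps check out, including the two points where care is needed: the identity $a-b=a$ for $a$ join irreducible with $a\nleq b$ (which follows from $a-b=a-(a\meet b)$ and $a\meet b\ll a$), and the fact that the lifting hypothesis of proposition~\ref{pr:finite-injection} holds even when a join irreducible lies above several minimal ones and is therefore duplicated in $\cI$.
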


This is certainly well known, and easy to check.  

\begin{theorem}\label{th:EC-V6}
  The theory of the variety $\cV_6$ has a model-completion which is
  axiomatized by the axioms of co-Heyting algebras augmented by the
  density and splitting axioms D6 and S6. 
\end{theorem}

\begin{proof}
Let $a,c$ be any elements in a finite chain $L$ such that $c \ll a$. If
$a=\ZERO$ then $b=\ZERO$ satisfies $c \ll b \ll a$. Otherwise $c <a$ and
obviously $L$ embeds into a chain $L'$ containing a new intermediate
element $b$ between $a$ and $a^-$. Then by construction $c \ll b \ll a$
and $b\neq\ZERO$. By fact~\ref{fa:V6-caract} and
lemma~\ref{le:non-sense-prod} it follows that every algebra
existentially closed in $\cV_6$ satisfies D6. 

Let $a,b_1,b_2$ be three elements in a finite chain $L$ such that $b_1
\join b_2 \ll a$ and $b_1\meet b_2 =\ZERO$. We may assume that $b_2 \leq b_1$, so
by assumption $b_2=\ZERO$. If $a=\ZERO$ then $a_1=a_2=\ZERO$ satisfy
the conclusion of S6. Otherwise $b_1 < a$ and one can take for
$a_1,a_2$ the non zero points in the extension $L'$ of $L$ shown in
figure~\ref{fi:V6-embed} (the  white points represent
$L$). Note that $L'=L\times\Ldeux$ belongs to $\cV_2$.
By fact~\ref{fa:V6-caract} and lemma~\ref{le:non-sense-prod} again,
it follows that every algebra existentially closed in $\cV_6$
satisfies S6. 

\begin{figure}[ht]
  \begin{center}

    \begin{tikzpicture}[scale=.5]
      
      \foreach \k in {0,1,3.5,4.5}  
        {\draw (\k,\k) -- ++(-1,1);}

      \begin{scope}[every node/.style={draw,shape=circle,fill=white,
                 inner sep=0mm, minimum size=1mm}]
        \draw (0,0) node{} -- (1,1) node{} -- (1.75,1.75);
        \draw[loosely dotted] (1.75,1.75) -- (2.75,2.75);
        \draw (2.75,2.75) -- (3.5,3.5) node{} -- (4.5,4.5);
      \end{scope} 
      \begin{scope}[every node/.style={draw,shape=circle,fill=black,
                 inner sep=0mm, minimum size=1mm}]
        \draw (4.5,4.5) node(a1){}; 
      \end{scope}

      \begin{scope}[shift={(-1,1)}]
        \begin{scope}[every node/.style={draw,shape=circle,fill=black,
                   inner sep=0mm, minimum size=1mm}]
          \draw (0,0) node(a2){} -- (1,1) node{} -- (1.75,1.75);
          \draw[loosely dotted] (1.75,1.75) -- (2.75,2.75);
          \draw (2.75,2.75) -- (3.5,3.5) node{} -- (4.5,4.5);
        \end{scope} 
        \begin{scope}[every node/.style={draw,shape=circle,fill=white,
                   inner sep=0mm, minimum size=1mm}]
          \draw (4.5,4.5) node(a){} -- (5.5,5.5) node{} -- (6,6);
          \draw[loosely dotted] (6,6) --(7,7);
          \draw (7,7) -- (7.5,7.5) node{};
        \end{scope}
      \end{scope}

    \draw (a1) node[below right]{$a_1$};
    \draw (a2) node[left]{$a_2$};
    \draw (a) node[above left]{$a$};

    \end{tikzpicture}

    \caption{\label{fi:V6-embed}$L$ (in white) inside $L\times\Ldeux$}
  \end{center}
\end{figure}

Conversely let $L$ in $\cV_6$ be satisfying D6 and S6, $L_0$ a finitely
generated subalgebra and $L_1$ a finitely generated extension of $L_0$
in $\cV_6$ generated by a primitive tuple $(x_1,x_2)$ with signature
$\sigma=(g,\{h_1,h_2\},r)$ in $L_0$ (numbered so that $h_i=x_i \meet g^-$). As
usually it only remains to find a primitive tuple in $L$ having the
same signature $\sigma$ in order to conclude that $L_1$ embeds into $L$
over $L_0$ by corollary~\ref{co:isom-sign}, hence to finish the proof
by fact~\ref{fa:mod-comp}. 

{\it Case 1:} $r=1$ so $x_1=x_2$ and $h_1 \ll x_1 \ll g$. Same as 
case~1 in the proof of theorem~\ref{th:embed-V1}.

{\it Case 2:} $r=2$ so $h_1 \join h_2 = g^-$. Since $x_1,x_2$ are join
irreducible and incomparable, $x_1-x_2=x_1$ and $x_2-x_1=x_2$. By
definition of $\cV_6$ it follows that $x_1 \meet x_2=\ZERO$, hence {\it a
fortiori} $h_1 \meet h_2=\ZERO$. So the splitting axiom S6 applies to
$g,h_1,h_2$. Then finish the proof like in case~2 of
theorem~\ref{th:embed-V1}.
\end{proof}

\section{Appendix} 

It is proven in \cite{ghil-zawa-1997}, page~44, that for every $x,z$ in
an existentially closed algebra $L$ in $\cH_1$ there are elements
$x_1, x_2$ such that $x_1\join x_2= \UN$, $x_1\meet x_2=x$ and:
\begin{displaymath}
   (z-x_1)\meet x = (z-x_2)\meet x
\end{displaymath}
Since this axiom asserts the existence of a splitting of $\UN$ in two
parts $x_1$ and $x_2$ intersecting along $x$ with an additional
condition, it is very close in spirit to our axiom S1. Is it
equivalent to S1? Our guess is no. However it follows from the next
proposition (with $w=\UN$) that the above axiom is implied by S1. 

\begin{proposition}
  Let $L$ be a Heyting algebra satisfying the splitting axiom S1. Then
  for every $w,x,z$ in $L$ such that $x\leq w$ there are elements $x_1,x_2$ in $L$
  such that $x_1\join x_2= w$, $x_1\meet x_2=x$ and:
  \begin{displaymath}
     (z-x_1)\meet x = (z-x)\meet x = (z-x_2)\meet x
  \end{displaymath}
\end{proposition}

\begin{proof}
Since $(z-x)\meet x\ll z-x$, S1 gives\footnote{Of course S1 applies only if
$z-x\neq\ZERO$ but otherwise it suffices to take $a_1=a_2=\ZERO$.}
elements $a_1,a_2$ such that: 
\begin{displaymath}
  \begin{array}{c} 
    (z-x)-a_2 = a_1 \geq (z-x)\meet x\\
    (z-x)-a_1 = a_2 \geq (z-x)\meet x\\
    a_1 \meet a_2 = (z-x)\meet x
  \end{array}
\end{displaymath}
Let $c=w-(z\join x)$. Since $a_i \leq z-x\leq z\join x$ for $i=1,2$ we have that
$c\meet a_i\ll c$. Thus S1 again gives\footnote{As above, if $c=\ZERO$ we
cannot apply S1 but $c_1=c_2=\ZERO$ then suits perfectly our needs.}
elements $c_1,c_2$ such that: 
\begin{displaymath}
  \begin{array}{c} 
    c-c_2 = c_1 \geq c\meet a_1\\
    c-c_1 = c_2 \geq c\meet a_2\\
    c_1 \meet c_2 = c\meet a_1 \meet a_2
  \end{array}
\end{displaymath}
Let $x_i=x\join a_i \join c_i$ for $i=1,2$. By construction:
\begin{displaymath}
  x_1 \join x_2 = x\join (a_1\join a_2)\join (c_1\join c_2)
  = x \join (z-x) \join c = (z \join x) \join c =w 
\end{displaymath}
Moreover $c_1 \meet a_2 = c_1 \meet c \meet a_2 \leq c_1 \meet c_2$. The latter is smaller
than $a_1 \meet a_2$ which is smaller than $x$. Symmetrically $c_2 \meet a_2 \leq
x$ so by distributivity we get:
\begin{displaymath}
  (a_1 \join c_1)\meet(a_2 \join c_2) \leq x
\end{displaymath}
Thus $x_1 \meet x_2 = x \join \big[(a_1 \join c_1)\meet(a_2 \join c_2)\big]=x$. Finally
we have by construction: 
\begin{displaymath}
  z-x_1=\big((z-x)-a_1\big)-c_1=a_2-c_1
\end{displaymath}
We already noticed that $c_1 \meet a_2 \leq c_1 \meet c_2$. The latter is smaller
than $a_1\meet a_2 \ll a_2$ so $a_2-c_1=a_2$. Recall that: 
\begin{displaymath}
  (z-x)\meet x \leq a_2 \leq z-x
\end{displaymath}
Thus $(z-x_1)\meet x=a_2\meet x = (z-x)\meet x$ and symmetrically for $x_2$.
\end{proof}

All the other properties of non-zero existentially closed Heyting
algebras listed in proposition~A2~(i)--(iv) of \cite{ghil-zawa-1997}
easily follow from the density axiom D1, except (iv) that we derive
from S1 in the next proposition. 

\begin{proposition}
  Let $x,y$ be any elements in a co\--Heyting algebra $L$ satisfying
  S1. Then $y\to x$ exists in $L$ if and only if $(\UN-y)\meet y\leq x$.
\end{proposition}

\begin{remark}
  It is an easy exercise to check that in every co\--Heyting algebra,
  if $(\UN-y)\meet y\leq x$ then $y\to x$ exists and equals $(\UN-y)\join x$.
  So the above proposition shows that among co\--Heyting algebras,
  those which satisfy the axiom S1 are ``the least possibly bi\--Heyting''.
\end{remark}

\begin{figure} [ht]
  \begin{center}

    \begin{tikzpicture}[scale=.5]

      \filldraw[fill=gray!20] (-5,-3) rectangle (5,3);
      \draw (5,-3) node[above left]{$\UN$};

      \draw[thick,dotted,fill=white]
        (-.49,.7) .. controls +(1.5,2.5) and +(0,3) .. (-4,0)
        .. controls +(0,-4) and +(3,-2) .. (-.49,-.7) --cycle;
        \draw (-1.5,-2.3) node{$z$};

      \filldraw[fill=gray!30,opacity=.7] (-2.5,-1.5) rectangle (-.5,1.5);
      \draw (-2.5,-1.5) node[above right]{$x$};

      \filldraw[thick,fill=gray!80,opacity=.7] (-1.2,-.7) rectangle (2,.7);
      \draw (2,-.7) node[above left]{$y$};

      \filldraw[dotted,fill=white] 
        (3.5,.7) node{$a_2$} ellipse (.8 and 1.6);
      \draw (3,-1.5) node{$a_1$};

    \end{tikzpicture}
    \caption{\label{fi:appendice}Splitting of $\UN-(z \join y)$ (here
     $x \subseteq z$) }
  \end{center}
\end{figure}

As we explained at the beginning of this paper, many of our proofs are
inspired by the geometric intuition coming from the ``co\--Heyting''
(instead of ``Heyting''). As an illustration, we add the ``picture of
the proof'' and how to use it for this last proof.

By the above remark we only have to prove that, assuming $(\UN-y)\meet y\nleq
x$, the set $\cal Z$ of elements $z$ in $L$ such that $z\meet y\leq x$ has
(thanks to S1) no greatest element. So let $z$ be any element in $\cal
Z$, and let us imagine that $x,y,z$ are semi-algebraic subsets of the
real plane in figure~\ref{fi:appendice}.

By assumption $z \cap y \subseteq x$, that is $z$ does not contain any point of
$y$ which is not in $x$. The largest possible such set is the
complement of $y \setminus x$, but $z$ cannot be so large without meeting the
frontier of $y$, that is $(\UN-y)\cap y$, outside $x$. One sees then in
figure~\ref{fi:appendice} how to increase $z$ without changing $z \cap
y$: it suffices to split (using S1) the intermediate piece which is the
complement of $z \cup y$ into two disjoint pieces, one of which avoids to
touch the border, and to add the latter to $z$.

\begin{proof}
Let $x,y\in L$ such that $(\UN-y)\meet y\nleq
x$, and $\cal Z$ the set of elements $z$ in $L$ such that $z\meet y\leq x$.
We have to prove that $\cal Z$ has no greatest element. For any element
$z$ in $\cal Z$, let $a=\UN-(z \join y)$. Note that:
\begin{displaymath}
   [(z\join y)-y]\meet y = (z-y)\meet y\leq z\meet y\leq x
\end{displaymath}
Since $(\UN-y)\meet y\nleq x$ by assumption, it follows that $\UN\neq z\join y$
hence $a\neq\ZERO$. The splitting property S1 then gives non-zero
elements $a_1,a_2$ in $L$ such that:
\begin{displaymath}
  \begin{array}{c} 
    a-a_2 = a_1 \geq a \meet (z\join y)\\
    a-a_1 = a_2 \geq \ZERO\\
    a_1 \meet a_2 = \ZERO
  \end{array}
\end{displaymath}
Clearly $a_2\nleq z$ since $a-z=a$ and $a-a_2<a$. On the other hand: 
\begin{displaymath}
  a_2\meet y =a_2\meet a\meet y \leq a_2 \meet a_1 =\ZERO 
\end{displaymath}
Thus $(z \join a_2)\meet y=z\meet y\leq x$, which proves that $z \join a_2\in\cal Z$ and
consequently that $z$ is not maximal in $\cal Z$. 
\end{proof}


\end{document}